\definecolor{LightCyan}{rgb}{0.88,1,1}
\definecolor{Gray}{gray}{0.9}
\newtheorem{theorem}{Theorem}
\newtheorem{lemma}[theorem]{Lemma}
\newtheorem{corollary}[theorem]{Corollary}
\newtheorem{proposition}[theorem]{Proposition}
\theoremstyle{definition}
\newtheorem{example}[theorem]{Example}
\newtheorem{remark}[theorem]{Remark}
\newtheorem{definition}[theorem]{Definition}
\newcommand{\Z}{\mathbb{Z}}
\newcommand{\Q}{\mathbb{Q}}
\newcommand{\Heis}{\mathrm Heis}
\newcommand{\Fer}{\mathrm Fer}
\newcommand{\Hom}{{\mathrm Hom}}
\newcommand{\Aut}{{ \mathrm Aut }}
\newcommand{\Gal}{\mathrm{Gal}}
\renewcommand{\mod}{{\;\mathrm{mod}}}
\date{\today}
\title{Galois Action on Homology of the Heisenberg Curve.}
\author[A. Kontogeorgis]{Aristides Kontogeorgis}
\address{Department of Mathematics, National and Kapodistrian  University of Athens
Pane\-pist\-imioupolis, 15784 Athens, Greece}
\email{kontogar@math.uoa.gr}
\author[D. Noulas]{Dimitrios Noulas}
\address{Department of Mathematics, National and Kapodistrian University of Athens\\
Panepistimioupolis, 15784 Athens, Greece}
\email{dnoulas@math.uoa.gr}
\date \today
\newcommand{\aprod}{\mathop{\operator@font \hbox{\Large$\ast$}}}
\begin{document}

\keywords{Heisenberg and Fermat Curve, Homology of algebraic curves, Automorphisms, Mapping class group, Absolute Galois group, Combinatorial group theory}

\subjclass{11G30, 14H37, 20F36}

\begin{abstract}
The Heisenberg curve is  defined topologically as a cover of the Fermat curve and corresponds to an extension of the projective line minus three points by the non-abelian 
Heisenberg group modulo n. We compute its fundamental group and investigate an action from Artin's Braid group to the curve itself and its homology. We also provide a description of the homology in terms of irreducible representations of the Heisenberg group over a field of characteristic $0$.
\end{abstract}
\maketitle

\section{Introduction}
In \cite{MR4117575}, \cite{MR4186523} the fundamental group of an open abelian Galois cover 
$X \rightarrow \mathbb{P}^1$
of the projective line was computed and used in order to study the actions of the automorphism group $\mathrm{Aut}(X)$, the absolute Galois group $\Gal(\overline{\mathbb{Q}}/\mathbb{Q})$ and the braid group on the homology of the curve $X$. In this article we follow the same approach in order to study the Heisenberg curve, which is a Galois group of the projective line with Galois group the non-abelian discrete Heisenberg group $H_n$, see definition \ref{def:Heiss}. The automorphism of the Heisenberg curve was studied in \cite{MR4252293}. 

The homology group as a $\mathbb{F}[H_n]$-module, over a field $\mathbb{F}$ of characteristic zero, containing the $n$-th roots of unity,  is given in theorem \ref{thm:homHeiss}
\[
H_1(X_H,\mathbb{F}) = \bigoplus\limits_{j=0}^{n-1}\;\;\bigoplus\limits_{i,s=0}^{\gcd(n,j)-1}  \mathbb{F}h_{ijs} \chi_{ijs},
\]
  where the coefficients $h_{ijs} \in \mathbb{Z}$ are explicitly described in eq. (\ref{eq:Hom-coeffs}) and the irreducible characters $\chi_{ijs}$ are discussed in the appendix. 

The approach we use is as follows. From the ramified cover $X\rightarrow \mathbb{P}^1$ we can remove 
the branched points ${0,1,\infty}$ and obtain an open cover 
$X^0 \rightarrow X_3:= \mathbb{P}^1\setminus\{0,1,\infty\}$. Covering space 
theory then provides that the open curve $X^0$ can be described as a quotient of the universal 
covering space $\tilde{X_3}$ by the fundamental group of the open curve $\pi_1(X^0,x^\prime)<\pi_1(X_3,x)$ 
for a fixed point $x \in X_3$ and a randomly chosen fixed preimage $x^\prime$ in $X^0$. The fundamental group 
$\pi_1(X_3,x)$ is the free group $F_2$ of rank $2$ and can be presented as
\[
\pi_1(X_3,x) = \langle x_1, x_2, x_3 | x_1x_2x_3 = 1 \rangle,
\] with each $x_i$ representing the  homotopy 
class of loops on $x$ around the punctures $0,1,\infty$ respectively. 

This setting fits the framework of Y. Ihara as in \cite{Ihara1985-it}, \cite{IharaCruz}. Firstly, the braid group $B_3$ can be realized as a subgroup of $\operatorname{Aut}(F_2)$ generated by the elements $\sigma_i$ for $i=1,2$ given by
\[\sigma_i(x_k) = \begin{cases}
  x_k & k\neq i ,i+1,\\ 
  x_i x_{i+1}x_i^{-1}& k=i, \\
  x_i & k=i+1.
\end{cases}\] where we use that $x_3 = (x_1x_2)^{-1}$. There is a natural surjection $B_3 \rightarrow S_3$ with its kernel being the so called pure 
braid group $P_n$, where every element $\sigma\in P_n$, satisfies
\[\sigma(x_k) \sim x_k^{N(\sigma)}, \; \textrm{ for some }N(\sigma) \in \Z^* = \{\pm1\},\] for $N(\sigma)$ not depending on $x_k$ and by $\sim$ we denote conjugation. According to Ihara the pure braid group is
 a discrete analogue of $\Gal(\overline{\Q}/\Q)$ in the following sense. Fixing a prime $\ell$, by considering the étale pro-$\ell$ 
 fundamental group of $\mathbb{P}^1_{\overline{\Q}}\setminus \{P_1,P_2,\infty\}$, with $P_i \in \Q$, he introduced the monodromy representation
\[\operatorname{Ih}_2 : \Gal(\overline{\Q}/\Q) \longrightarrow \operatorname{Aut}(\mathfrak{F}_2),\] 
with $\mathfrak{F}_2$ being the pro-$\ell$ completion of $F_2$. The group $\mathfrak{F}_2$ can be a considered as a quotient in the pro-$\ell$ category of the 
free pro-$\ell$ group $\mathfrak{F}_3$ and admit a similar presentation $\mathfrak{F}_2 = \langle x_1,x_2,x_3 | x_1x_2x_3=1\rangle$. Ihara's representation has image inside the group
\begin{equation}
\label{eq:IharaAction}
\left\{ \sigma \in \operatorname{Aut}(\mathfrak{F}_2): \; \sigma(x_i)\sim x_i^{N(\sigma)}, (1\leq i \leq 3) \textrm{ for some } N(\sigma) 
\in \Z_{\ell}^*  \right\},
\end{equation} where again $N(\sigma)$ does not depend on $x_i$ and the composition $N\circ \operatorname{Ih}_2:\Gal(\overline{\Q}/\Q)\rightarrow \Z^*_{\ell}$ coincides with the cyclotomic character $\chi_{\ell}$.

We begin by considering the perhaps most famous curve in number theory, that is the Fermat curve in projective 
coordinates $x^n +y^n = z^n$, which has a fundamental group denoted by $R_{\Fer_n}$. By considering a subgroup of $R_{\Fer_n}$ that is the kernel of the surjection $F_2 \rightarrow H_n$ we obtain the Heisenberg curve through the covering subspaces Galois correspondence of $\pi_1(X_3,x)$. This kernel denoted by $R_{\Heis_n}$ is precisely the fundamental group of 
the Heisenberg curve. We then use the free generators of $R_{\Fer}$ as stepping stones and employ the 
Schreier's lemma on them to compute the free generators of $R_{\Heis}$, in order to investigate the braid group action. 

What happens is also interesting in the field of moduli versus field of definition point of view as in the work of Debes, 
Douai in \cite{DebesDouai97} and Debes, Emsalem in \cite{DebesEmsalem}. Let $K$ be a field and $K_S$ a separable closure, then a 
curve defined a priori on $K_S$ might not always be definable over $K$. Their work provide a cohomological 
measure to when this is possible, considering reductions to covers and their automorphisms while descending from $K_S$ to $K$. 

In that framework, similar to how Ihara derives the monodromy representation, an action of $\Gal(K_S/K)$ is lifted on the geometric fundamental group $\Pi_{K_S}(\mathbb{P}^1\setminus\{0,1,\infty\}) \simeq \mathfrak{F}_2$ via the exact sequence
\[1\longrightarrow \Pi_{K_S}(\mathbb{P}^1\setminus\{0,1,\infty\}) \longrightarrow \Pi_{K}(\mathbb{P}^1\setminus\{0,1,\infty\}) \longrightarrow \Gal(K_S/K)\rightarrow 1,\]
which induces an action on the covers of $\mathbb{P}^1$. The field of moduli for a cover $X$ is defined to be the fixed field of the automorphisms in $\Gal(K_S/K)$ that produce a cover isomorphic to $X$.

In contrast to this arithmetic action on $\mathfrak{F}_2$, in this paper we consider the geometric in nature action of the braid group and its induced action on the covers. The Heisenberg curve turns out to be an interesting example in this setting, 
whereas the Fermat curve having a fundamental group that 
is a characteristic subgroup of $F_2$ stays invariant under the braid group action. We provide a case where the Heisenberg curve under the braid action gets mapped to an entirely new non-isomorphic curve.

% It turns out some part of the Braid group maps it to 
% an isomorphic curve of a different model.

\textbf{Structure of the paper.} In section \ref{sec:Heisenberg Curves} we provide the preliminaries in \ref{subsec: prelim} and define the Heisenberg curve as a cover of the Fermat curve in \ref{subsec:HeisCoverFer}. Then in \ref{subsec:Fundamental} we compute its fundamental group and describe the Galois action in \ref{subsubsec:GalAction}. Afterwards, in 
\ref{subsec:decomp} we describe the elements that correspond to lifts of homotopy classes of loops around $\infty$ in terms of our previously established generators. Moreover, in \ref{subsec:stabilized} we discuss all the elements of lifts 
around the punctures and define the homology of the compactified curve after removing those points. In \ref{subsec:braid} we investigate the braid action on the curve and its homology and discuss the Burau representation in \ref{subsubsec:burau}. Finally, in \ref{sec:Alexander} we discuss the theory of Alexander Modules leading up to the proof of our main theorem. We provide an appendix \ref{appendix} of the irreducible characters of $H_n$ as many computations in section \ref{sec:Alexander} rely on them and we also provide the following small case example, highlight many parts of the paper.

\begin{example}
  For $n=3$ the Fermat curve $X_{F_3}$ is given by the affine equation $x^3 + y^3 = 1$ and it has a projective canonical weierstrass equation 
  $zy^2 = x^3 -432 z^3$ with genus $1$. The Heisenberg curve $X_{H_3}$ is also of genus $1$, thus we have an isogeny of elliptic curves 
  $X_{H_3} \rightarrow X_{F_3}$ and an equation of $X_{H_3}$ has been computed in \cite{MR4252293} to be 
  $y^2=x^3 + 2^4\cdot 3^6$. 
  
  In terms of group theory, the open Fermat and Heisenberg curves can be defined as quotients of the universal covering space $\tilde{X}_3$ of 
  $X_3 = \mathbb{P}^1\setminus\{0,1,\infty\}$. More precisely, if $F_2 = \langle a,b\rangle$ is the 
  Galois group of $\tilde{X}_3$ over $X_3$, then the open curves $X_{F_3}^\circ,X_{H_3}^\circ$ are defined by the fundamental groups
  \[\langle a^3,b^3, [a,b]\rangle, \ \langle a^3,b^3, [a,[a,b]], [b,[a,b]]\rangle\] respectively, as subgroups of $F_2$. Quotiening these 
  by $\Gamma = \langle a^3, b^3, (ab)^3 \rangle$ which are the third 
  powers of the classes of loops around the branched points $\{0,1,\infty\}$, we obtain the fundamental groups of the curves $X_{F_3},X_{H_3}$ which we expect to be isomorphic to $\Z \times \Z$. Indeed, in \cite{MR4186523} the free generators of the fundamental group of the Fermat curve have been computed and for $n=3$ the generators of $H_1(F_3,\Z)$
   are $[a,b] \mod \Gamma$ and $[a,b]^a \mod \Gamma$. We provide a SageMath \cite{sagemath9.8} script\footnote{The script can be found at 
   \url{https://github.com/noulasd/HeisenbergCurve}} which verifies the word problem that $[a,b],[a,b]^a$ indeed 
   commute $\mod \Gamma$.
   %\href{https://github.com/noulasd/HeisenbergCurve}{github.com/noulasd/HeisenbergCurve}

  Let $T=[a,b]$ and denote by $x^y = yxy^{-1}$, using the results from this paper we compute that the free generators of the fundamental group of $X_{H_3}^\circ$ are 
  \[ (a^3)^{b^i T^j}, \ (b^3)^{a^i T^j},\ \quad 0\leq i,j\leq 2, \]
  \[ ((ab)^3)^{a^iT^j}, \quad 0\leq i,j\leq 2, \ (i,j)\neq (0,0),\]
  \[ [a,T], \ [a,T]^a. \] 
  Thus, $H_1(X_{H_3},\Z)$ is generated by the classes of $[a,T]$ and $[a,T]^a$ modulo $\Gamma$ and SageMath can verify that also these commute. Lastly, for a 
  field $\mathbb{F}$ of characteristic $0$ which contains the roots of $x^3-1$, if $\chi_{ijs}$ 
  are the irreducible characters of the discrete Heisenberg 
  group $H_3$ as described in the appendix, the regular representation is
  \[\mathbb{F}[H_n] = \bigoplus\limits_{i,s=0}^{2} \mathbb F \chi_{i0s} \oplus  
  \mathbb F 3\chi_{010} \oplus \mathbb{F}3 \chi_{020},\] of 9 one-dimensional and 2 three-dimensional irreducible 
  representations. The homology over $\mathbb{F}$ as a subrepresentation has the character $\chi_{ijs}$ appearing 
  $3/\gcd(3,j) - z_j(i,s)$ times, for $z_j(i,s)$ as defined in the Alexander Modules section, that is we recover 
  \[
  H_1(X_{H_3},\mathbb{F}) = \mathbb{F} \chi_{101} \oplus \mathbb{F} \chi_{202},
  \] as the two-dimensional vector space of the torus that corresponds to the elliptic curve.
  \end{example}

\noindent {\bf Aknowledgements}
The research project is implemented in the framework of H.F.R.I Call “Basic research Financing (Horizontal support of all Sciences)” under the National Recovery and Resilience Plan “Greece 2.0” funded by the European Union Next Generation EU(H.F.R.I.  
Project Number: 14907.
% The research project is implemented in the framework of H.F.R.I Call “Basic research Financing 
% (Horizontal support of all Sciences)” under the National Recovery and Resilience Plan “Greece 2.0” funded by the 
% European Union Next Generation EU(H.F.R.I. Proposal Number: 14907, Project Number: 16618 (MIS: 5163923)).
\begin{center}
\includegraphics[scale=0.4]{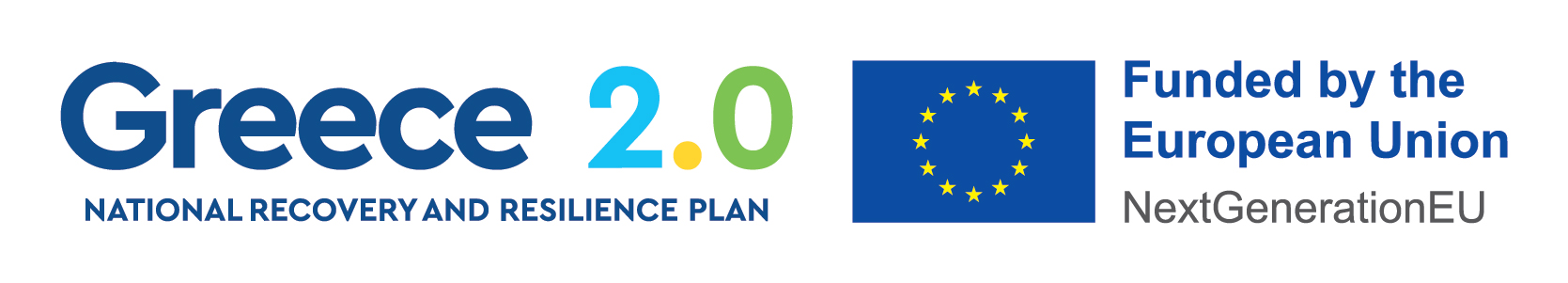}
\hskip 1cm
\includegraphics[scale=0.05]{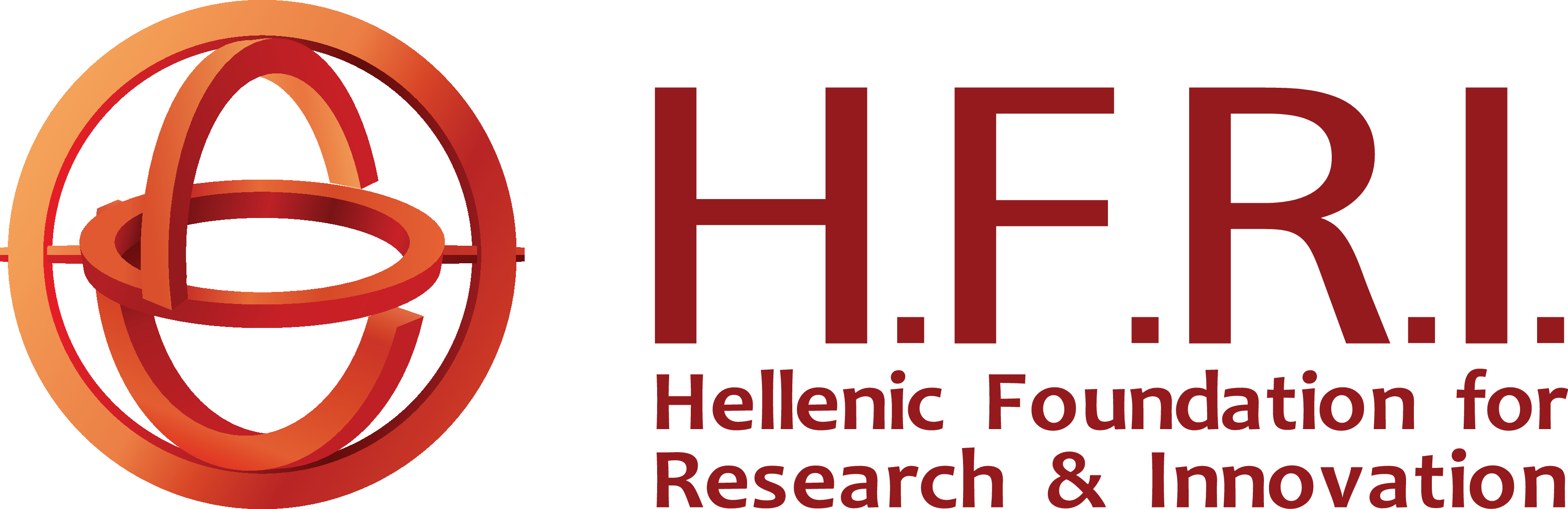}
\end{center}%
\section{Heisenberg Curves} \label{sec:Heisenberg Curves}
\subsection{Group theory preliminaries} \label{subsec: prelim}

\begin{definition} The commutator $[x,y]$ of two elements $x,y$ in a group is defined as $[x,y] = xyx^{-1}y^{-1}$.
\end{definition}

\begin{definition} The exponent $x^y$ of two elements $x,y$ in a group is defined as $x^y = xyx^{-1}$.
\end{definition}

\begin{definition} For a group $G$ the subgroup $G^\prime$ is generated by the commutators $[g,h]$ for $g,h$ in $G$. The abelizanization is defined as $G^{\textrm{ab}} := G/G^\prime$.
\end{definition}
\begin{lemma}\label{gtheory1}
  With the above definitions, for three elements $x,y,z$ in a group, the following identity holds.
  \[[xy,z] = [y,z]^x \cdot [x,z]. \] 
\end{lemma}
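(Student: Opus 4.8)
The plan is to prove the identity by a direct expansion valid in \emph{any} group, using only the two definitions just given: the commutator $[a,b] = aba^{-1}b^{-1}$ and conjugation, which I take in the form $a^b = bab^{-1}$ (this is the convention under which $[y,z]^x$ is a genuine conjugate-modification of $[y,z]$, and the one under which the identity is applied later in the paper). No structural hypotheses on the group are needed; every simplification reduces to the cancellation of an adjacent inverse pair.

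First I would expand the left-hand side. Since $(xy)^{-1} = y^{-1}x^{-1}$, the definition of the commutator gives
\[ [xy,z] = (xy)\,z\,(xy)^{-1}z^{-1} = x y z y^{-1} x^{-1} z^{-1}. \]
This reduced word of length six is the target form to match.

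Next I would expand the right-hand side factor by factor. Conjugation yields $[y,z]^x = x[y,z]x^{-1} = x\,(yzy^{-1}z^{-1})\,x^{-1}$, while the second factor is simply $[x,z] = xzx^{-1}z^{-1}$. Multiplying them,
\[ [y,z]^x\cdot[x,z] = x y z y^{-1} z^{-1} x^{-1}\cdot x z x^{-1} z^{-1}. \]
The central pair $x^{-1}x$ cancels, which then places $z^{-1}$ and $z$ adjacent; cancelling those as well collapses the product to $x y z y^{-1} x^{-1} z^{-1}$, exactly the word obtained for the left-hand side.

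Comparing the two computations establishes the identity. There is no real obstacle: the only point requiring care is fixing the conjugation convention consistently, so that $[y,z]^x$ is read as $x[y,z]x^{-1}$, after which the two cancellations occur in a forced order and the conclusion is immediate. Conceptually I would remark that the identity records the fact that the map $g \mapsto [g,z]$ is a crossed homomorphism for the conjugation action, which explains why it is precisely the $[y,z]$-term that picks up the $x$-conjugate; but the cleanest self-contained argument is the elementary expansion above.
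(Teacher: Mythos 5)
Your proof is correct and takes essentially the same approach as the paper: a direct expansion using only the definitions and free cancellation, with the conjugation convention $[y,z]^x = x[y,z]x^{-1}$ fixed exactly as the paper's own computation uses it. The only cosmetic difference is that you reduce both sides to the common word $xyzy^{-1}x^{-1}z^{-1}$, whereas the paper inserts the cancelling pair $(y^{-1}z^{-1})(zy)$ into the left-hand side and regroups it into $[y,z]^x\cdot[x,z]$; the two manipulations are interchangeable.
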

\begin{proof}
\begin{align*}
    [xy,z] &= xyz \cdot y^{-1}x^{-1}z^{-1} = xyz \cdot (y^{-1}z^{-1}) \cdot (zy) \cdot y^{-1}x^{-1}z^{-1}\\
    &= x[y,z]zx^{-1}z^{-1} = x[y,z]x^{-1} [x,z]= [y,z]^x \cdot [x,z].
\end{align*}
\end{proof}

\begin{lemma}
  \label{trick} For two elements $x,y$ in a group and for a positive integer $j$
\[[x^j,y] = [x,y]^{x^{j-1}} \cdot [x,y]^{x^{j-2}} \cdots [x,y]^x \cdot [x,y],\]
\[[x,y^j] = [x,y] \cdot [x,y]^y \cdots [x,y]^{y^{j-2}} \cdot [x,y]^{y^{j-1}}.\]
\end{lemma}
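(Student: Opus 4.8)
The plan is to prove both identities by induction on $j$, with Lemma \ref{gtheory1} as the engine for the inductive step. For the first identity the base case $j=1$ is the tautology $[x^1,y]=[x,y]$. For the step I would write $x^{j+1}=x\cdot x^j$ and apply Lemma \ref{gtheory1} with the roles $(x,y,z)\mapsto(x,x^j,y)$, giving $[x^{j+1},y]=[x^j,y]^x\cdot[x,y]$. Substituting the inductive hypothesis for $[x^j,y]$ and using that conjugation $g\mapsto g^x$ is a group automorphism — so that it distributes over the product and satisfies $(g^{x^k})^x=g^{x^{k+1}}$ — each factor $[x,y]^{x^k}$ in the hypothesis is carried to $[x,y]^{x^{k+1}}$. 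The conjugated product $[x^j,y]^x$ therefore becomes $[x,y]^{x^j}\cdots[x,y]^{x}$, and appending the trailing $[x,y]=[x,y]^{x^0}$ yields exactly the claimed expression for $j+1$.

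For the second identity I would first record the companion commutator identity $[x,yz]=[x,y]\cdot[x,z]^y$, which expands the commutator in its second argument and is proved by the same direct cancellation used for Lemma \ref{gtheory1}. Running the induction with this identity, the step $[x,y^{j+1}]=[x,y\cdot y^j]=[x,y]\cdot[x,y^j]^y$ combined with the hypothesis and $(g^{y^k})^y=g^{y^{k+1}}$ produces the formula with exponents now increasing from left to right. Alternatively — and perhaps more cleanly — I would deduce the second identity from the first using $[a,b]^{-1}=[b,a]$: since $[x,y^j]=[y^j,x]^{-1}$, taking the inverse of the first identity applied to $[y^j,x]$ reverses the order of the factors and inverts each one, and $\bigl([y,x]^{y^k}\bigr)^{-1}=[x,y]^{y^k}$ turns the reversed product back into $[x,y]\,[x,y]^y\cdots[x,y]^{y^{j-1}}$.

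There is no genuine obstacle here; the content is entirely bookkeeping. The only points requiring care are the conjugation convention $g^x=xgx^{-1}$, which fixes the direction in which the exponents climb and hence the order of the factors, and the automorphism property $(g^{x^k})^x=g^{x^{k+1}}$ that shifts every exponent by one under a further conjugation. Keeping these two conventions consistent is what guarantees that the descending pattern $x^{j-1},\dots,x,1$ in the first identity and the ascending pattern $1,y,\dots,y^{j-1}$ in the second come out in the correct order.
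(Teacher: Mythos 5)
Your proof is correct, but note that the paper itself does not actually prove this lemma: its ``proof'' is just the citation \cite[0.1, p.1]{DDMS}, so there is no internal argument to compare against. What your write-up buys is self-containedness: the induction on $j$ driven by Lemma \ref{gtheory1} (for the first identity), and by either the companion identity $[x,yz]=[x,y]\cdot[x,z]^y$ or the inversion trick $[x,y^j]=[y^j,x]^{-1}$ (for the second), uses only material already established in the paper, and every step checks out under the paper's conventions $[x,y]=xyx^{-1}y^{-1}$ and $g^x=xgx^{-1}$. Of your two options for the second identity, the inversion route is the cleaner one: since $\bigl([y,x]^{y^k}\bigr)^{-1}=[x,y]^{y^k}$, inverting the first identity applied to $[y^j,x]$ automatically reverses the descending exponent pattern into the ascending one, with no new identity needed. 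The companion-identity route is equally valid but carries the small obligation you correctly flag, namely that $[x,yz]=[x,y]\cdot[x,z]^y$ is not literally Lemma \ref{gtheory1} and requires its own one-line cancellation check (it does hold: $[x,y]\cdot y[x,z]y^{-1} = xyx^{-1}\cdot xzx^{-1}z^{-1}y^{-1} = [x,yz]$). Either version would serve as a drop-in replacement for the paper's external citation.
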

\begin{proof}
  See \cite[0.1, p.1]{DDMS}
\end{proof}

\begin{definition}
  \label{def:Heiss}
  The (Discrete) Heisenberg group modulo $n$ is defined as the group of matrices of the form
  \[H_n := \left\{\begin{pmatrix}
    1 & x & z \\
    0 & 1 & y \\
    0 & 0 & 1
  \end{pmatrix}, \quad x,y,z \in \Z/n\Z \right\}.\]
  It is generated by 
  \[ a = \begin{pmatrix}
    1 & 1 & 0 \\
    0 & 1 & 0 \\
    0 & 0 & 1
  \end{pmatrix}, \quad b = \begin{pmatrix}
    1 & 0 & 0 \\
    0 & 1 & 1 \\
    0 & 0 & 1
  \end{pmatrix}.\] Note that, 
  \[[a,b] =  \begin{pmatrix}
    1 & 0 & 1 \\
    0 & 1 & 0 \\
    0 & 0 & 1
  \end{pmatrix}. 
  \] 
\end{definition}

\noindent
According to \cite{MR1816711}, $H_n$ admits a presentation:
\[H_n = \langle a,b \mid a^n, b^n, \ a[a,b]=[a,b]a,  \ b[a,b] = [b,a]b 
\rangle. \] It is evident from the matrices that
we have the extra relation $[a,b]^n = 1$. We can derive it from the other relations as follows, since it will be useful later.

\begin{equation} \label{Tn0}
   1 = a^n b^n = b \cdot a^n [a,b]^n b^{n-1} = b[a,b]^n b^{n-1}= [a,b]^n.
\end{equation}

\subsection{The Heisenberg curve as a cover of the projective line and the Fermat curve} \label{subsec:HeisCoverFer}

\noindent
In \cite{MR4252293} the Heisenberg curve is defined as a cover of the 
projective line minus three points $\mathbb{P}^1 \setminus \{0,1,\infty\}$
 with deck group $H_n$. The Heisenberg curve is a $\Z /n \Z$-cover 
 of the Fermat curve, which in turn has deck group $\Z/n\Z \times 
 \Z/n\Z$ over the punctured projective line. We denote as $F_2$ the 
 free group $\pi_1(\mathbb{P}^1\setminus\{0,1,\infty\}, x_0)$
and by abusing notation we denote the generators by $a$ and $b$, 
which are  the classes of loops around $0$ and $1$ respectively. The whole data is depicted below in the following exact sequences:

\[
% https://tikzcd.yichuanshen.de/#N4Igdg9gJgpgziAXAbVABwnAlgFyxMJZARgBoAGAXVJADcBDAGwFcYkQAlAfWAB1eAEjCxwuYAL4hxpdJlz5CKAMwVqdJq3YCxUmSAzY8BIiuJqGLNohD8AWgAIA9GHt3XvPAFt47h89+6soYKRGRmNBaa1tx8vABiMABOYpLSQfLGKOSqERpWIOSB+nJGisjZ4eqW7IVpxcGZyABMOVVRIHFcTUUGGWUtlZH5nd11vaVEACykg3k1PSUhKNNUudXWtWowUADm8ESgAGaJEJ5I2SA4EEgql-RYjOwAFhAQANZFx6dI05fXiOQ6l8zogAKw0K43IEnEEANgh-wA7DQnjB6FB2JAwGxod9EPC-khkSA4E8sIccEgALRkECo9GYgg4vTApAEyGIYi4kG0jktOlojHWHAAdwg9KgCG55wRbOliFuHMR8uJfPl-I5AA55bz-gBOcSUcRAA
\begin{tikzcd}
  0 \arrow[r] & R_{\Heis_n} \arrow[d, hook] \arrow[r] & F_2 \arrow[d, no head] \arrow[d, no head, shift left] \arrow[r] & H_n \arrow[d, two heads] \arrow[r] & 0 \\
  0 \arrow[r] & R_{\Fer_n} \arrow[r]                  & F_2 \arrow[r]                                                   & \Z /n \Z \times \Z /n \Z \arrow[r] & 0
  \end{tikzcd}
\]
 where $R_{\Heis_n}, R_{\Fer_n}$ are the normal closures generated by the elements
\begin{align*}
  R_{\Fer_n} &:= \langle a^n, b^n, \ [a,b] \rangle \\
  R_{\Heis_n} &:= \langle a^n, b^n, \ a[a,b]a^{-1}[a,b]^{-1}, \ 
  b[a,b]b^{-1}[a,b]^{-1} \rangle 
\end{align*} and $[a,b]^n \in R_{\Heis_n}$. We will omit the $n$ and write 
$R_{\Heis}$ whenever it is clear in the given context, although because of the following results from \cite{MR4252293} we will have to keep track of it at various times. 

\begin{lemma}\label{pregenus} The Heisenberg curve is an unramified cover of the Fermat curve if $n$ is odd and a ramified one if $n$ is even. Moreover, in the ramified case the points above $\infty$ have ramification index $2n$.
\end{lemma}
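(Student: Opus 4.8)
The plan is to read off the ramification of the intermediate cover $X_{\Heis}\to X_{\Fer}$ from local monodromy data, using the standard fact that for a Galois cover of $\mathbb{P}^1$ branched only over $\{0,1,\infty\}$ with deck group $G=F_2/R$, the ramification index above a puncture $P$ equals the order in $G$ of the local monodromy element, i.e. the image of the loop $x_P$ encircling $P$. Since $R_{\Heis_n}$ is normal in $F_2$ and contained in $R_{\Fer_n}$, the cover $X_{\Heis}\to X_{\Fer}$ is itself Galois, with group $R_{\Fer_n}/R_{\Heis_n}\cong\ker(H_n\twoheadrightarrow \Z/n\Z\times\Z/n\Z)=Z(H_n)=\langle[a,b]\rangle\cong\Z/n\Z$; in particular all points of $X_{\Heis}$ above a fixed point of $X_{\Fer}$ carry the same ramification index. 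Because ramification indices multiply in the tower $X_{\Heis}\to X_{\Fer}\to\mathbb{P}^1$, at a point lying over $P$ one has
\[
e\bigl(X_{\Heis}/X_{\Fer}\bigr)=\frac{\mathrm{ord}_{H_n}(g_P)}{\mathrm{ord}_{\Z/n\Z\times\Z/n\Z}(\bar g_P)},
\]
where $g_P\in H_n$ and $\bar g_P$ are the images of $x_P$. So it suffices to compute these orders for the three loops $a$, $b$, $(ab)^{-1}$ around $0,1,\infty$.

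The punctures $0$ and $1$ are immediate. Their loops map to $a$ and $b$, and from the matrices $a^k$, $b^k$ one sees at once that $a,b$ have order exactly $n$ in $H_n$, while their images $(1,0),(0,1)$ have order $n$ in $\Z/n\Z\times\Z/n\Z$. Hence the ratio above equals $1$ and $X_{\Heis}\to X_{\Fer}$ is unramified over every point lying above $0$ or $1$, regardless of the parity of $n$.

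The heart of the matter is $\infty$, whose loop maps to $(ab)^{-1}$, so I need the order of $ab$ in $H_n$. Using the closed form for powers of a unipotent matrix,
\[
(ab)^k=\begin{pmatrix} 1 & k & k+\binom{k}{2} \\ 0 & 1 & k \\ 0 & 0 & 1 \end{pmatrix}\pmod n,
\]
the off-diagonal entries force $k\equiv 0\pmod n$, while the $(1,3)$-entry demands $k+\binom{k}{2}\equiv 0\pmod n$. For $k=n$ the term $\binom{n}{2}=\tfrac{n(n-1)}{2}$ is $\equiv 0\pmod n$ when $n$ is odd but $\equiv \tfrac{n}{2}\pmod n$ when $n$ is even, so $(ab)^n=1$ for $n$ odd and $(ab)^n\neq 1$ for $n$ even. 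As the order is a multiple of $n$ dividing $2n$ (one checks $(ab)^{2n}=1$ directly), the order of $ab$ in $H_n$ is $n$ for $n$ odd and $2n$ for $n$ even, whereas its image in $\Z/n\Z\times\Z/n\Z$ always has order $n$.

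Assembling the pieces: for $n$ odd the ratio is $1$ at all three punctures, so $X_{\Heis}\to X_{\Fer}$ is unramified; for $n$ even the ratio at $\infty$ equals $2$, giving the only ramification, and the total ramification index of $X_{\Heis}\to\mathbb{P}^1$ over $\infty$ is $2\cdot n=2n$, as claimed. I expect the sole genuine obstacle to be the parity analysis of $\binom{n}{2}$ modulo $n$, which is precisely what produces the odd/even dichotomy; the remainder is bookkeeping with multiplicativity of ramification indices in a tower of Galois covers.
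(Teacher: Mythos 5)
Your proposal is correct, but it is worth noting that the paper itself does not prove this lemma at all: it simply cites \cite[lemma 11]{MR4252293} and \cite[proof of lemma 14]{MR4252293}, deferring to the paper in which the Heisenberg curve was originally studied. What you have written is a complete, self-contained replacement. Your key step --- the closed form $(ab)^k = \left(\begin{smallmatrix} 1 & k & k+\binom{k}{2} \\ 0 & 1 & k \\ 0 & 0 & 1\end{smallmatrix}\right)$, giving $(ab)^n = T^{n(n-1)/2}$, hence $(ab)^n=1$ for odd $n$ and $(ab)^n = T^{n/2}$ of order $2$ for even $n$ --- is exactly the computation the present paper performs later, group-theoretically via the swap trick, as its Lemma \ref{ramification}; you obtain it instead from powers of a unipotent matrix, which is arguably cleaner. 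The surrounding bookkeeping (ramification index above a puncture equals the order of the local monodromy element in the deck group; indices multiply in the tower $X_{\Heis}\to X_{\Fer}\to \mathbb{P}^1$; the intermediate cover is Galois with group $Z(H_n)\cong \Z/n\Z$, so indices are constant along fibers) is standard and correctly applied, and it recovers both the odd/even dichotomy and the index $2n$ over $\infty$. What your route buys is independence from the external reference; what the citation buys the authors is brevity and consistency with \cite{MR4252293}, where the same facts are established in the setting in which the curve was first defined.
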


\begin{proof} see \cite[lemma 11]{MR4252293} for the first statement and \cite[proof of lemma 14]{MR4252293} for the second statement.
\end{proof}

Using  the Riemann-Hurwitz genus formula we obtain the following
\begin{lemma} \label{genus}
  The genus $g$ of the (closed) Heisenberg curve is
  \[ g = \begin{cases} \frac{n^2(n-3)}{2} + 1 & \textrm{if } \gcd(n,2)=1, \\
    \frac{n^2(n-3)}{2} + \frac{n^2}4 + 1 & \textrm{if } 2 \mid n.
  \end{cases} \]
\end{lemma}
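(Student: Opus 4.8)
The plan is to apply the Riemann--Hurwitz formula to the Galois cover $X_H \to \mathbb{P}^1$, whose deck group is $H_n$ of order $|H_n| = n^3$ and which is branched only over $\{0,1,\infty\}$. For such a cover the ramification index above a branch point equals the order in $H_n$ of the monodromy element attached to a small loop around that point, and the number of points lying over it is $n^3$ divided by that order. The loops around $0$ and $1$ correspond to the generators $a$ and $b$, each of order $n$ (indeed $a^n=b^n=1$ and neither has smaller order), so there are $n^2$ points above each of $0$ and $1$, all with ramification index $n$. This part is immediate; the only parity-sensitive input is the behaviour at $\infty$.

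The main point of the argument, and where I expect the real work to lie, is the order of $ab$, since the monodromy around $\infty$ is the class of $(x_1x_2)^{-1}=(ab)^{-1}$ coming from the relation $x_1x_2x_3=1$. First I would record that, writing an element of $H_n$ as a triple $(x,y,z)$ with product rule $(x_1,y_1,z_1)(x_2,y_2,z_2)=(x_1+x_2,\,y_1+y_2,\,z_1+z_2+x_1y_2)$, an easy induction gives $(ab)^k=(k,\,k,\,\tfrac{k(k+1)}{2})$. Hence $(ab)^n=(0,0,\tfrac{n(n+1)}{2})$, and $\tfrac{n(n+1)}{2}$ is divisible by $n$ exactly when $n$ is odd, while for even $n$ it reduces to $\tfrac{n}{2}$, so that $(ab)^n=[a,b]^{n/2}\neq 1$ and $(ab)^{2n}=1$. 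Thus $\mathrm{ord}(ab)=n$ for odd $n$ and $\mathrm{ord}(ab)=2n$ for even $n$, which is precisely the ramification index $2n$ above $\infty$ recorded in Lemma \ref{pregenus}. Consequently there are $n^2$ points above $\infty$ when $n$ is odd and $n^2/2$ points when $n$ is even.

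With this ramification data the formula $2g-2 = n^3(2\cdot 0 - 2) + \sum_{P}(e_P-1)$ becomes, in the odd case,
\[
2g-2 = -2n^3 + 3\cdot n^2(n-1) = n^2(n-3),
\]
and in the even case
\[
2g-2 = -2n^3 + 2\cdot n^2(n-1) + \tfrac{n^2}{2}(2n-1) = n^3 - \tfrac{5n^2}{2}.
\]
Solving for $g$ and simplifying yields the two stated expressions. The genuine obstacle is thus the order computation for $ab$, i.e.\ verifying that the central coordinate $\tfrac{n(n+1)}{2}$ survives modulo $n$ exactly in the even case; the remainder is bookkeeping of orbit counts. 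As a consistency check one may instead factor the cover as $X_H \to X_F \to \mathbb{P}^1$, invoke the classical Fermat genus $\tfrac{(n-1)(n-2)}{2}$, and apply Riemann--Hurwitz to the degree-$n$ map $X_H \to X_F$, which by Lemma \ref{pregenus} is unramified for odd $n$ and ramified of index $2$ above the $n$ points of $X_F$ lying over $\infty$ for even $n$; both parities then reproduce the same two values.
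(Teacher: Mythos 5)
Your proof is correct and follows essentially the approach the paper itself indicates: the lemma is stated right after invoking Riemann--Hurwitz, with the details deferred to \cite[Lemma 15]{MR4252293}, where the same computation (the ramification data of Lemma \ref{pregenus} fed into the Riemann--Hurwitz formula for the $n^3$-sheeted cover $X_H\rightarrow\mathbb{P}^1$) is carried out. Your key step, the order of $ab$ via $(ab)^n=[a,b]^{n(n+1)/2}$, is exactly the content of the paper's Lemma \ref{ramification}, which obtains $[a,b]^{n(n-1)/2}$ --- the same element, since $[a,b]^n=1$.
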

\begin{proof} See \cite[lemma 15]{MR4252293}.
\end{proof} 

%\noindent
%In \cite{MR4252293}, it is argued that the Heisenberg curve is an unramified cover of the Fermat curve if $n$ is odd and ramified if $n$ is even. We will have to take 
%this into account in the following computations. Combining Schreier's lemma with the above presentation of $R_{\Heis}$, we have that
%\[2g + m - 1 = n^3 + 1\] which implies $m$ is $3n^2$ in the unramified case and $3n^2 - n^2/2$ in the ramified one. We will prove that $3n^2$ (resp. $3n^2 - \frac{n^2}2$) elements are stabilized by an element of $H_n$ in the unramified (resp. ramified) case.

\subsection{Fundamental group of the Heisenberg curve} \label{subsec:Fundamental}

In this section we will describe the fundamental group of the Heisenberg curve. To do this, we will make use of the Schreier lemma \cite[chap. 2, sec. 8]{bogoGrp} 
which given a free group and a generating set of it, it can provide a generating set for a given subgroup under some conditions.

More precisely, for a free group $F$ with basis $X = \{x_1,\ldots,x_s\}$ 
a Schreier Transversal of a subgroup $H$ is a set $T$ of reduced words such 
that all initial segments of a word in $T$ is also in $T$ and for every coset of $H$ in $F$ contains a unique word of $T$. We will denote this unique word by $\overline{g}$ for every $g$ in $F$. The Schreier's lemma concludes that $H$ has a freely generating set consisting of the elements $\gamma(t,x) := tx\overline{tx}^{-1}, t \in T, x \in X$, whenever $tx$ is not in $T$ and $\gamma(t,x)$ does not reduce to $1$.

We will use the description of the fundamental group of the Fermat curve $R_{\Fer}$ as it is given in \cite{MR4186523}, 
and compute $R_{\Heis}$ using the Schreier's lemma. The group $R_{\Fer}$ has the description 
\[ R_{\Fer}=  \langle a_1, b_1, \ldots, a_g,b_g, \gamma_1,\ldots,\gamma_{3n} \ \mid \ \gamma_1\gamma_2 \cdots \gamma_{3n}\cdot [a_1,b_1][a_2,b_2] \cdots [a_g,b_g] = 1\rangle, \] where $g = \frac{(n-1)(n-2)}2$ is the genus 
of the Fermat curve. The $3n$ elements $\gamma_m$ are the elements $(a^n)^{b^i}, (b^n)^{a^i}$ 
and $((ab)^n)^{a^i}$ for $0 \leq i \leq n-1$ and the $2g$ elements remaining are $[b,a]^{a^ib^j}$ for $0\leq i \leq n-2$ and $0\leq j \leq n-3$. Under a base change in \cite{MR4186523}, which even though is written in additive notation only the group operation is being used, we have the free generators of the open Fermat curve:
\[  (a^n)^{b^i}, (b^n)^{a^i}, \; 0\leq i \leq n-1, \; \; [a,b]^{a^ib^j}, \; 0\leq i,j,\leq n-2, \] which will be our initial input in the Schreier process.

As the Heisenberg curve is a $\Z /n \Z$-cover of the Fermat curve, $R_{\Fer_n} / R_{\Heis_n}$ is cyclic, generated by the commutator $T:=[a,b]$. 
Therefore we choose $T^k, 0\leq k \leq n-1$ as a Schreier transversal for $R_{\Fer_n}/R_{\Heis_n}$.
Before computing the generators in Schreier's algorithm, the following elementary lemma will be beneficial in capturing the information about the ramification throughout this paper. 
\begin{lemma} \label{ramification}
   In $H_n$ we have that $(ab)^n = T^{\frac n2} = T^{-\frac n2}$ if $2\mid n$, otherwise it is $1$. Moreover, $(ab)^n$ is in $R_{\Heis_n}$ if and only if $2$ does not divide $n$. 
\end{lemma}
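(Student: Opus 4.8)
The plan is to compute the image of $(ab)^n$ in $H_n$ and to read off both assertions from it. Since $R_{\Heis_n}$ is by definition the kernel of the surjection $F_2 \to H_n$ appearing in the top row of the defining exact sequence, the element $(ab)^n \in F_2$ lies in $R_{\Heis_n}$ precisely when its image in $H_n$ is trivial. Thus everything reduces to a single computation inside $H_n$.

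The key structural fact I would use is that $T = [a,b]$ is central in $H_n$ — this is visible directly from the matrices in Definition \ref{def:Heiss}, where $[a,b]$ has $x$- and $y$-entries equal to $0$ — together with $T^n = 1$ by (\ref{Tn0}). Working in the class-two nilpotent group $H_n$, I would first establish the collection identity
\[(ab)^n = a^n b^n\, T^{-\binom{n}{2}}.\]
This follows by a short induction on $n$: from $[a,b]=T$ and the centrality of $T$ one gets $ba = T^{-1}ab$, hence $b^k a = T^{-k} a b^k$, and feeding this into $(ab)^{k+1} = (ab)^k \cdot ab = a^k b^k T^{-\binom{k}{2}}ab$ produces $a^{k+1}b^{k+1}T^{-\binom{k}{2}-k}$, where $\binom{k}{2}+k = \binom{k+1}{2}$. (Alternatively one reads this off from the explicit power formula for unipotent $3\times 3$ matrices, which gives $(ab)^k$ with upper-right entry $k+\binom{k}{2}$, providing an independent check.) Since $a^n = b^n = 1$ in $H_n$, this yields $(ab)^n = T^{-\binom{n}{2}}$.

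It then remains to reduce the exponent $\binom{n}{2} = \tfrac{n(n-1)}{2}$ modulo $n$. When $n$ is odd, $\tfrac{n-1}{2}$ is an integer and $\binom{n}{2} = n\cdot \tfrac{n-1}{2} \equiv 0 \pmod n$, so $(ab)^n = 1$. When $n$ is even, $\binom{n}{2} = \tfrac{n}{2}(n-1) \equiv -\tfrac{n}{2} \equiv \tfrac{n}{2} \pmod n$, so $(ab)^n = T^{-n/2} = T^{n/2}$, the last equality holding because $T^{n/2}\cdot T^{n/2} = T^n = 1$ forces $T^{n/2}=T^{-n/2}$. This settles the first assertion. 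For the second, the displayed computation shows the image of $(ab)^n$ in $H_n$ is trivial exactly when $n$ is odd; when $n$ is even the class $T^{n/2}$ is nontrivial, since $n/2 \not\equiv 0 \pmod n$, so $(ab)^n \notin R_{\Heis_n}$. Hence $(ab)^n \in R_{\Heis_n}$ if and only if $2 \nmid n$.

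The only genuinely delicate point — and the one I would be most careful about — is the bookkeeping of the central factor: both the sign of the exponent $-\binom{n}{2}$ and the reduction of $\binom{n}{2}$ modulo $n$ in the even case must be tracked exactly, since the whole content of the lemma lives in this single power of $T$. Everything else is routine, and the matrix formula offers a reassuring cross-check.
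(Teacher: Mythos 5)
Your proof is correct and follows essentially the same route as the paper: both arguments collect commutators using the centrality of $T$ in $H_n$ to exhibit $(ab)^n$ as a power $T^{\pm\binom{n}{2}}$ (your sign convention is opposite to the paper's, but the two exponents agree modulo $n$ since $T^n=1$), reduce that exponent modulo $n$ to split the odd/even cases, and settle the membership claim via $H_n \cong F_2/R_{\Heis_n}$. The only cosmetic differences are that you package the computation as the collection identity $(ab)^n = a^n b^n T^{-\binom{n}{2}}$ and add a matrix cross-check, neither of which changes the argument.
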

\begin{proof}
In $H_n$ the elements $a$ and $b$ commute with $T$, thus we can compute
\begin{align*}
(a b)^n &= T b \underbrace{a^2 b}_{\textrm{swap}} (a b)^{n-2} = T^{1+2} 
b^2 \underbrace{a^3 b}_{\textrm{swap}} (a b)^{n-3} = \cdots \\
&=  T^{1+\cdots + (n-1)} b^{n-1} a^n b \\
&= T^{\frac{n(n-1)}2} \\
&= \begin{cases}0, & \textrm{ if }\gcd(n,2)=1 \\
  T^{\frac{n}2}, & \textrm{ if } 2\mid n.
\end{cases} 
 \end{align*} For the second assesment, note that $H_n \cong F_2/R_{\Heis_n}$.
\end{proof} Now we compute:
\begin{align*}
    \overline{T^k (b^n)^{a^i}} &= T^k \\
    \overline{T^k (a^n)^{b^i}} &= T^k \\
    \overline{T^k [a,b]^{a^i b^j}} &=  \begin{cases}
        T^{k+1}, & k \leq n-1 \\
        1, & k=n-1
        \end{cases}
\end{align*}
The above computations are straightforward, since they happen in $H_n$.
We compute now the free generators:
\begin{align*}
  T^k (b^n)^{a^i} \cdot \left( \overline{T^k (b^n)^{a^i}}\right)^{-1} & =  
  T^k (b^n)^{a^i} T^{-k}, \quad 0\leq i,k \leq n-1 \\
  T^k (a^n)^{b^i} \cdot \left( \overline{T^k (a^n)^{b^i}}\right)^{-1} & =  
  T^k (a^n)^{b^i} T^{-k}, \quad 0\leq i,k \leq n-1 \\
  T^k [a,b]^{a^i b^j} \cdot \left(\overline{T^k [a,b]^{a^i b^j}}\right)^{-1} &= 
  \begin{cases}
      T^{n-1} [a,b]^{a^i b^j}, &0\leq i,j\leq n-2,\\
      & k= n-1\\
      & \\  
      T^k [a,b]^{a^i b^j} T^{-(k+1)}, & 0\leq k,i,j \leq n-2, \\
      & (i,j)\neq (0,0),\\
      & \\
      1, & (i,j)=(0,0), 0\leq k \leq n-2.
  \end{cases}
  \end{align*}
  
  \begin{lemma}
      The generators of the free group $R_{\Heis_n}$ are listed below, as a union of the following sets:
      \begin{align*}
          A_1 & = \{T^k (a^n)^{b^i} T^{-k}, \quad 0\leq i,k \leq n-1 \}, \quad \# A_1 = n^2, \\
          A_2 & = \{T^k (b^n)^{a^i} T^{-k}, \quad 0\leq i,k \leq n-1 \}, \quad \# A_2 = n^2, \\
          A_3 & = \{T^{n-1} T^{a^i b^j}, \quad 0\leq i,j\leq n-2\}, \quad \# A_3 = (n-1)^2, \\
          A_4 & = \{ T^k T^{a^i b^j} T^{-(k+1)}, \quad 0\leq k,i,j\leq n-2,\; (i,j) \neq (0,0)\}, \\
           \# A_4 & = (n-1)^3 - (n-1).
        \end{align*}
  \end{lemma}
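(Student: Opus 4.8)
The plan is to invoke Schreier's lemma directly for the pair $R_{\Heis_n} < R_{\Fer_n}$, taking $R_{\Fer_n}$ as the ambient free group with the basis $X = \{(a^n)^{b^i}, (b^n)^{a^i} : 0 \le i \le n-1\} \cup \{[a,b]^{a^i b^j} : 0 \le i,j \le n-2\}$ recorded above, and the powers $\{T^k : 0 \le k \le n-1\}$ of $T = [a,b]$ as the transversal. First I would confirm that this is genuinely a Schreier transversal. Since $T = [a,b]^{a^0 b^0}$ is itself one of the basis elements of $X$, each $T^k$ is a reduced word in $X$, and the set $\{T^0, \dots, T^{n-1}\}$ is prefix-closed because the only initial segments of the word $T^k$ are the shorter powers $T^0, \dots, T^k$. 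That these $n$ words represent the $n$ distinct cosets follows from $R_{\Fer_n}/R_{\Heis_n} \cong \Z/n\Z$ being cyclic, generated by the class of $T$, which is the content of the preceding discussion (note $R_{\Heis_n} \triangleleft F_2$, so the quotient is a genuine group).

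Next I would compute the representatives $\overline{T^k x}$ for each $x \in X$; by Schreier's lemma the free generators of $R_{\Heis_n}$ are then $\gamma(T^k, x) = T^k x \, \overline{T^k x}^{\,-1}$. The key observation is that the coset of $T^k x$ is determined by the image of $x$ in $R_{\Fer_n}/R_{\Heis_n} \cong \Z/n\Z$, and this image can be read off inside $H_n = F_2/R_{\Heis_n}$, where $T$ is central and $a^n = b^n = 1$. Thus $(a^n)^{b^i}$ and $(b^n)^{a^i}$ lie in $R_{\Heis_n}$ and have trivial image, giving $\overline{T^k (a^n)^{b^i}} = \overline{T^k (b^n)^{a^i}} = T^k$, whereas every conjugate $[a,b]^{a^i b^j}$ equals the central element $T$ in $H_n$ and therefore has image $1$, giving $\overline{T^k [a,b]^{a^i b^j}} = T^{k+1}$ for $k \le n-2$ and $T^0$ for $k = n-1$. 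Substituting these representatives produces exactly the conjugates in $A_1$ and $A_2$, the ``wrap-around'' generators $T^{n-1} T^{a^i b^j}$ of $A_3$ (the case $k = n-1$), and the generators $T^k T^{a^i b^j} T^{-(k+1)}$ of $A_4$ (the case $k \le n-2$).

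Finally I would prune the trivial generators and count. The only redundancy occurs in the last family when $(i,j) = (0,0)$: there $[a,b]^{a^0 b^0} = T$, so $T^k \cdot T \cdot T^{-(k+1)}$ reduces to $1$ (equivalently $T^k T = T^{k+1}$ already lies in the transversal for $k \le n-2$), which is why these pairs are dropped from $A_4$ but retained in $A_3$, where $k = n-1$ forces the nontrivial value $T^n$. I would then check that no other $\gamma(T^k, x)$ collapses, which reduces to the fact that among the basis elements $[a,b]^{a^i b^j}$ equals $T$ only for $(i,j) = (0,0)$; this is immediate since distinct index pairs label distinct free generators of $R_{\Fer_n}$. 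A direct count then yields $\#A_1 = \#A_2 = n^2$, $\#A_3 = (n-1)^2$, and $\#A_4 = (n-1)\bigl((n-1)^2 - 1\bigr) = (n-1)^3 - (n-1)$. The one genuinely delicate point, and the main obstacle, is the bookkeeping of which Schreier generators vanish: one must keep the distinction between the standard vanishing condition $T^k x \in T$ (responsible for discarding $(0,0)$ with $k \le n-2$) and the boundary case $k = n-1$, where the same index pair instead contributes the loop-around-$\infty$ type generator $T^n = [a,b]^n$ that later encodes the ramification data of Lemma \ref{ramification}.
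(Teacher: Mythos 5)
Your proposal is correct and follows essentially the same route as the paper: applying Schreier's lemma to $R_{\Heis_n} < R_{\Fer_n}$ with the transversal $\{T^k : 0 \le k \le n-1\}$ and the free basis of $R_{\Fer_n}$, computing the coset representatives $\overline{T^k x}$ via images in $H_n$, and pruning exactly the generators with $(i,j)=(0,0)$, $k \le n-2$. The only cosmetic differences are that you verify the Schreier transversal property and the non-collapse of the remaining generators explicitly (which the paper leaves implicit), while the paper instead cross-checks the total count $n^3+1$ against the Schreier index formula.
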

\begin{proof}
    This is a direct consequence of the Schreier lemma. Notice that the above given sets add up to $n^3+1$ generators, as predicted by the Schreier index formula:
    \begin{align*}
    rank(R_{\Heis_n}) &= [F_2 : R_{\Heis_n}](2-1)+1  \\
    &= [R_{\Fer_n}:R_{\Heis_n}](rank(R_{\Fer_n}) -1) + 1 \\
    &= n^3 + 1 
    \end{align*}
    Indeed, we compute $ \sum \#A_i  =  n^3+1$.
\end{proof}
This basis will have a convenient form once Galois action is introduced, 
although it lacks ramification data. Specifically, we would like to be able to describe 
generators as homotopy classes of loops on the punctured tori 
$R_{\Heis}$ consists of, which would mean to have conjugates of $(ab)^n$ 
in the basis. Lemma \ref{ramification} tells us that $(ab)^n$ will 
be in $R_{\Heis_n}$ for odd $n$ and we can expect $(ab)^{2n}$ to be in $R_{\Heis_n}$ for even $n$. We will provide this in detail in a later section.

\subsubsection{Galois action} \label{subsubsec:GalAction}

Suppose we have a group $G$ with a normal subgroup $N$, on which $G$ acts by conjugation. 
We would like to define a conjugation action of $G/N$ to $N$, induced from the action of $G$, 
but this can only be well-defined modulo inner automorphisms. Considering this problem, we can have a well-defined action of $G/N$ on the abelianization $N/N^\prime$. We will use this on the exact sequence 
\[1\rightarrow R_{\Heis} \rightarrow F_2 \rightarrow H_n \rightarrow 1\]
 to have a Galois action on $R_{\Heis}^{\operatorname{ab}}$. 
Consider the two generators $\alpha = a R_{\Heis}$, $\beta = b R_{\Heis}$ 
as well as $\tau = [a,b]R_{\Heis}$ of the group $H_n$. Then, there exists a 
well-defined action
of these three on $R_{\Heis_n}/R_{\Heis_n}^\prime$ given by conjugation, that is

\[ x^{\alpha} = x^a = axa^{-1}, \quad x^{\beta} = x^b = bxb^{-1}, \quad x^\tau = x^T = TxT^{-1}\]
for all $x \in R_{\Heis_n}/R_{\Heis_n}^\prime$. Notice that this is an action, 
which implies that
\[
\left(x^\alpha\right)^\tau = x^{ \tau \alpha } = x^{ \alpha \tau } = \left(x^{\tau}\right)^{\alpha} 
\]
\[
\left(x^\beta\right)^\tau = x^{\tau \beta} = x^{  \beta  \tau} = \left(x^{\tau}\right)^{\beta}, 
\]
that is the actions of $\alpha$ and $\beta$ commute with $\tau$. 
In general, the actions of $\alpha$ and $\beta$ do not commute in this case, as it happens in the Fermat curve. We will still use $a,b,T$ as exponents to denote the conjugation as defined earlier and we use $\alpha,\beta,\tau$ when 
the base of the exponent is in $R_{\Heis}$, when we can realize this as an action from an element of $H_n$. For example, to make sense why this 
is necessary, we have that $T$ is not in $R_{\Heis}$ and we can write $[a^n,T]$ as both $(a^n) - (a^n)^{\tau}$ and as $T^{a^n} 
\cdot T^{-1}$. Could the later one be realized as an action of $H_n$, the element would reduce to the identity.

\begin{lemma} \label{2elements}
  The elements $T^n,T^{-n} [a^ib^j,T]$ are in $R_{\Heis}$. In $R_{\Heis}/R_{\Heis}^\prime$ they are decomposed as follows:
\begin{align}
\label{Tplus}
  T^n &= a^n - (a^n)^\beta - \sum\limits_{k=0}^{n-2}\sum\limits_{i=0}^{n-2-k} [a,T]^{\tau^k \alpha^i}
\\
 \label{Tminusn}
  T^{-n} &= b^n - (b^n)^\alpha - \sum\limits_{k=0}^{n-2}\sum\limits_{j=0}^{n-2-k} [b,T^{-1}]^{\tau^{-k} \beta^j}
\\
 [a^ib^j,T] &= [b,T]^{\alpha^i \sum\limits_{\lambda=0}^{j-1} \beta^\lambda} + [a,T]^{\sum\limits_{\lambda=0}^{i-1} \alpha^\lambda }.
\end{align}
\end{lemma}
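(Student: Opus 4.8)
The plan is to obtain each decomposition by evaluating one well-chosen element of $R_{\Heis}$ in two different ways and comparing the results in the abelianization $R_{\Heis}/R_{\Heis}^\prime$, where, by the discussion preceding the lemma, the conjugations $\alpha,\beta,\tau$ act and a product of conjugates becomes the corresponding additive ($\Z[H_n]$-module) sum appearing in the statement. Membership is immediate: since $H_n=F_2/R_{\Heis}$ and $T=[a,b]$ is central in $H_n$, every commutator $[g,T]$ maps to $1$ in $H_n$, so $[a^ib^j,T]\in R_{\Heis}$; likewise $T^n=[a,b]^n\in R_{\Heis}$ by the computation (\ref{Tn0}) showing $[a,b]^n=1$ in $H_n$, and hence $T^{-n}=(T^n)^{-1}\in R_{\Heis}$.

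For the third identity I would apply Lemma \ref{gtheory1} with $x=a^i$, $y=b^j$, $z=T$ to get $[a^ib^j,T]=[b^j,T]^{a^i}\cdot[a^i,T]$, and then expand both commutators by Lemma \ref{trick}, namely $[a^i,T]=\prod_{\lambda=0}^{i-1}[a,T]^{a^\lambda}$ and $[b^j,T]=\prod_{\lambda=0}^{j-1}[b,T]^{b^\lambda}$. Here every single factor already lies in $R_{\Heis}$ (again because $T$ is central in $H_n$), so I may pass to $R_{\Heis}/R_{\Heis}^\prime$ term by term, turning conjugation by $a,b$ into $\alpha,\beta$ and products into sums; this produces exactly $[b,T]^{\alpha^i\sum_{\lambda}\beta^\lambda}+[a,T]^{\sum_{\lambda}\alpha^\lambda}$.

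The two displayed identities (\ref{Tplus}) and (\ref{Tminusn}) are the substance of the lemma. For (\ref{Tplus}) I would evaluate $[a^n,b]\in R_{\Heis}$ twice. Directly, $[a^n,b]=a^n\cdot b a^{-n} b^{-1}=a^n\cdot(a^{-n})^\beta$, which in the abelianization reads $a^n-(a^n)^\beta$. On the other hand, Lemma \ref{trick} gives $[a^n,b]=\prod_{i=0}^{n-1}[a,b]^{a^i}$, and rewriting each factor as $[a,b]^{a^i}=a^iTa^{-i}=[a^i,T]\,T$ allows one to slide all $n$ copies of $T$ to the right. This collects a single $T^n$ and conjugates each $[a^i,T]$ by the appropriate power of $\tau$; after discarding the trivial $i=0$ term, expanding $[a^i,T]$ again by Lemma \ref{trick}, and re-indexing the triangular double sum (substituting $k=n-1-i$), I expect to reach $T^n+\sum_{k=0}^{n-2}\sum_{i=0}^{n-2-k}[a,T]^{\tau^k\alpha^i}$. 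Equating the two evaluations and solving for $T^n$ yields (\ref{Tplus}). Identity (\ref{Tminusn}) then follows from the symmetric computation with $[b^n,a]$, where $[b,a]=T^{-1}$ makes the powers of $\tau$ appear with negative exponents and swaps the roles of $a,\alpha$ and $b,\beta$.

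The step I expect to be the main obstacle is the sliding of the $T$'s in the second evaluation of $[a^n,b]$: the individual factors $[a,b]^{a^i}=a^iTa^{-i}$ do not lie in $R_{\Heis}$, so one cannot abelianize term by term as in the easy case, and the whole word must be manipulated in $F_2$ until a single $T^n$ together with genuine elements of $R_{\Heis}$ is isolated. Keeping careful track of which power of $\tau$ conjugates each $[a^i,T]$, and then matching the resulting triangular sum to the stated index ranges, is the delicate bookkeeping. It may be convenient to remember that $\tau^n=\mathrm{id}$ on $R_{\Heis}/R_{\Heis}^\prime$ (because $T^n\in R_{\Heis}$ acts by an inner automorphism), although the triangular ranges suggest that no reduction of exponents modulo $n$ is ultimately required.
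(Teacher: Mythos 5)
Your proposal is correct and follows essentially the same route as the paper's own proof: membership via centrality of $T$ in $H_n$ and eq.~(\ref{Tn0}), the third identity from Lemmas \ref{gtheory1} and \ref{trick}, and (\ref{Tplus}) by evaluating $[a^n,b]$ both directly as $a^n-(a^n)^\beta$ and as $T^{a^{n-1}+\cdots+a+1}=[a^{n-1},T]T\cdot[a^{n-2},T]T\cdots[a,T]T\cdot T$, then sliding the $T$'s rightward (the paper's ``nested conjugations'' trick, inserting $T^{-k}T^k$) to isolate $T^n$ plus conjugates $[a^{n-1-k},T]^{\tau^k}$ before expanding once more with Lemma \ref{trick}. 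The step you flagged as the main obstacle is exactly the manipulation the paper performs, and your symmetric treatment of $[b^n,a]$ with $[b,a]=T^{-1}$ matches the paper's ``similarly'' for (\ref{Tminusn}).
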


\begin{proof}
  The element $T^n$ happens to be in $A_3$ for $(i,j)=(0,0)$, also we expected it to be in $R_{\Heis}$ because it is trivial in $H_n \cong F_2/R_{\Heis}$ from equation \ref{Tn0}. Using lemma \ref{gtheory1} and \ref{trick} we can write
  \[
   [a^ib^j,T]=[b^j,T]^{a^i}[a^i,T] = [b,T]^{\alpha^i (\beta^{j-1} + \beta^{j-2} + \cdots + \beta + 1) } \cdot [a,T]^{ (\alpha^{i-1} + \alpha^{i-2} + \cdots + \alpha + 1)},
  \] which is in $R_{\Heis}$ and the decomposition follows in 
  the abelianization. We also compute:

  \begin{align*} a^n - (a^n)^\beta & = [a^n,b] = T^{a^{n-1}+\cdots + a + 1} \\
    &= [a^{n-1},T]T \cdot [a^{n-2},T]T \cdots [a,T]T \cdot T \\
    &= [a^{n-1},T]T (T^{-1}T) \cdot [a^{n-2},T]T (T^{-2}T^2) \\
    &\cdots [a,T]T (T^{-(n-1)}T^{n-1}) \cdot T (T^{-n}T^n) \\
    &= [a^{n-1},T] + [a^{n-2},T]^\tau + \cdots + [a,T]^{\tau^{n-2}} + T^n
  \end{align*} and the result follows. Similarly, we can prove eq. (\ref{Tminusn}).
\end{proof} 
Notice that in the second and third equality we have the group operation on elements of the form $[*,T]T$ and each $T$ that is not inside the commutator contributes as a conjugation to every 
element that comes after it. As we will use this computation trick again, we will refer to it as {\em nested conjugations}. We rewrite now the sets $A_i$ with $H_n$ action:
\begin{align*} 
A_1 & = \{ (a^n)^{\tau^k \beta^i}, \quad 0\leq i,k \leq n-1 \}\\
          A_2 & = \{(b^n)^{\tau^k \alpha^i}, \quad 0\leq i,k \leq n-1 \} \\
          A_3 & = \{ T^n + [a^ib^j,T]^{\tau^{n-1}}, \quad 0\leq i,j \leq n-2\} \\
          A_4 & = \{ [a^ib^j,T]^{\tau^k}, \quad 0\leq i,j,k \leq n-2, (i,j)\neq (0,0)\}
\end{align*} and we can invertibly write our basis as $0\leq k \leq n-1$:
\[(a^n)^{\tau^k \beta^i}, (b^n)^{\tau^k \alpha^i}, \; 0\leq i \leq n-1, \quad T^n, \quad [a^ib^j,T]^{\tau^k}, \; 0\leq i,j\leq n-2, (i,j)\neq (0,0). \] 
Using lemma \ref{2elements} we decompose in $R_{\Heis}/R_{\Heis}^\prime$ even further:
\begin{align*}
  T^k T^{a^i b^j}T^{-(k+1)} &= [a^i b^j,T]^{\tau^k} = \\
  &=  [b,T]^{\tau^k \alpha^i \sum\limits_{\lambda=0}^{j-1} \beta^\lambda} + [a,T]^{\tau^k \sum\limits_{\lambda=0}^{i-1} \alpha^\lambda },
\end{align*} and 
\begin{align*} 
  T^{n-1}T^{a^i b^j} & = T^n T^{-1} T^{a^i b^j} T^{-1}T= T^n T^{-1} [a^i b^j, T] T \\ 
  &= T^n + [b,T]^{\tau^{n-1} \alpha^i \sum\limits_{\lambda=0}^{j-1} 
  \beta^\lambda} +  [a,T]^{\tau^{n-1}\sum\limits_{\lambda=0}^{i-1} 
  \alpha^\lambda}.
\end{align*}
So far we have proved the following, which will 
be of use when investigating the braid action on the homology
 and the so called Burau representation.

\begin{proposition}\label{prop:basis}
For all $n$, the free $\Z[H_n]$-module $R_{\Heis}^{ab}$ is generated by the elements: \[a^n, b^n, [a,T],[b,T].\] As a $\Z$-module, it can be generated by the $n^3 +1$ elements: $0\leq k \leq n-1$,
\begin{align*}
  (a^n)^{\beta^i, \tau^k}, (b^n)^{\alpha^i \tau^k}, & \quad 0\leq i \leq n-1, \\
  [a,T]^{\alpha^i \tau^k}, & \quad 0\leq i \leq n-3, (i,k)=(n-2,0),\\
  [b,T]^{\alpha^i \beta^j \tau^k}, & \quad 0\leq i \leq n-2, 0 \leq j \leq n-3.
\end{align*}
\end{proposition}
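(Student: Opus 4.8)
The plan is to build on the Schreier $\Z$-basis $A_1\cup A_2\cup A_3\cup A_4$ already produced for $R_{\Heis}^{ab}$, together with the decompositions of Lemma~\ref{2elements} and equation~(\ref{Tplus}). Since $R_{\Heis}$ is free, $R_{\Heis}^{ab}$ is free abelian of rank $n^3+1$ with this union as a $\Z$-basis, so for the first assertion it suffices to rewrite each basis element inside the $\Z[H_n]$-span of $a^n,b^n,[a,T],[b,T]$ (all four of which lie in $R_{\Heis}$ by the proof of Lemma~\ref{2elements}). The elements of $A_1$ and $A_2$ are by construction the conjugates $(a^n)^{\tau^k\beta^i}$ and $(b^n)^{\tau^k\alpha^i}$, hence already in the $\Z[H_n]$-span of $a^n$ and $b^n$. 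For the commutator sets, the identity $[a^ib^j,T]=[b,T]^{\alpha^i\sum_{\lambda=0}^{j-1}\beta^\lambda}+[a,T]^{\sum_{\lambda=0}^{i-1}\alpha^\lambda}$ of Lemma~\ref{2elements} expresses every element of $A_4$, and the commutator summand of every element of $A_3$, as a $\Z$-combination of conjugates of $[a,T]$ and $[b,T]$; finally equation~(\ref{Tplus}) rewrites the one remaining generator $T^n$ (the $(i,j)=(0,0)$ member of $A_3$) as a $\Z$-combination of $a^n$, $(a^n)^\beta$ and conjugates $[a,T]^{\tau^k\alpha^i}$. This proves at once that $a^n,b^n,[a,T],[b,T]$ generate $R_{\Heis}^{ab}$ as a $\Z[H_n]$-module.

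For the explicit $\Z$-generating set I would simply track which elementary conjugates occur in these rewritings, and check that they all lie in the listed set. Applying Lemma~\ref{2elements} over the ranges $0\le i,j\le n-2$ of $A_3,A_4$ yields $[b,T]^{\alpha^i\beta^{j'}\tau^k}$ only for $0\le j'\le j-1\le n-3$ and $[a,T]^{\alpha^{i'}\tau^k}$ only for $0\le i'\le i-1\le n-3$, where $k$ sweeps $0\le k\le n-1$ once the case $k=n-1$ from $A_3$ is combined with the range $0\le k\le n-2$ from $A_4$; this matches the ranges $0\le i\le n-2,\ 0\le j\le n-3$ on the $[b,T]$-conjugates and $0\le i\le n-3$ on the $[a,T]$-conjugates. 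The one extra generator $[a,T]^{\alpha^{n-2}}$, i.e. the case $(i,k)=(n-2,0)$, never arises from $A_3$ or $A_4$ (those force $i'\le n-3$) and enters solely through equation~(\ref{Tplus}): in the double sum $\sum_{k=0}^{n-2}\sum_{i=0}^{n-2-k}[a,T]^{\tau^k\alpha^i}$ the index $i=n-2$ forces $k=0$. Thus every term appearing in the decomposition of the Schreier basis lies in the $n^3+1$ listed elements, so these generate $R_{\Heis}^{ab}$ over $\Z$; as their number equals the rank, they are automatically a $\Z$-basis.

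The point to get right is the boundary bookkeeping rather than any deep structural input, so the delicate step I anticipate is the interaction between $T^n$ and the $a^n$- and $[a,T]$-blocks in equation~(\ref{Tplus}). One must verify that the triangular region $\{(i,k):i+k\le n-2,\ k\le n-2\}$ of $[a,T]$-conjugates occurring there, together with the conjugates produced by $A_3$ and $A_4$, covers exactly the prescribed index set $\{0\le i\le n-3\}\cup\{(n-2,0)\}$ with no omission, and that the $a^n$ and $(a^n)^\beta$ contributions are already among the $A_1$-generators $(a^n)^{\beta^i\tau^k}$. Once this inclusion of index sets is confirmed case by case, the conclusion is formal: exhibiting each Schreier basis element inside the $\Z$-span of the listed elements is all that generation requires, and the matching cardinality upgrades generation to a basis.
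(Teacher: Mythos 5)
Your proposal is correct and takes essentially the same approach as the paper's own proof: both rewrite the Schreier generators $A_1,\dots,A_4$ using the decomposition of $[a^ib^j,T]$ from lemma \ref{2elements}, track the resulting indices, and obtain the one extra generator $[a,T]^{\alpha^{n-2}}$ from the decomposition of $T^n$ in eq. (\ref{Tplus}). The only difference is that you spell out the boundary bookkeeping that the paper's two-line proof leaves implicit.
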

\begin{proof} Follows by counting the indices that are possible to appear from the previous decomposition of $[a^ib^j,T]$. The element $[a,T]^{\alpha^{n-2}}$ follows from the decomposition of $T^n$.
\end{proof}

\subsection{Decomposition of $(ab)^n$} \label{subsec:decomp}

The element $(ab)^n$ corresponds to the $n$-lift of the path 
in $\mathbb{P}^1\setminus\{0,1,\infty\}$ around $\infty$, thus it is interesting to see how it 
decomposes in $R_{\Heis}$ in terms of our generators. From the ramification lemma \ref{ramification} this will only be possible for odd $n$, and for even $n$ we will be able to decompose in $R_{\Heis}$ the elements $(ab)^nT^{-\frac{n}2}, T^{\frac{n}2}(ab)^n$ from which we can form conjugates of $(ab)^{2n}$.
\begin{lemma} For all $n \in \mathbb{N}$, we can write the elements $(ab)^n$ as follows: 
  \[ (ab)^n = \prod\limits_{i=1}^{n-1} \left( \prod\limits_{j=0}^{i-1} [a^ib^{i-1-j},T^{-1}]T^{-1} \right) \; \cdot a^n b^n. \]
 \end{lemma}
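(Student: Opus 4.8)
The plan is to prove the identity by induction on $n$, working entirely inside the free group $F_2=\langle a,b\rangle$ with $T=[a,b]$, and to reduce the whole double product to a single application of lemma \ref{trick}. The base case $n=1$ is immediate: the outer product is empty and the right-hand side is $a^1b^1=ab=(ab)^1$. The crucial first move I would make is to collapse each factor appearing in the product. Since the convention is $[x,y]=xyx^{-1}y^{-1}$, one has $[x,T^{-1}]=xT^{-1}x^{-1}T$, and hence the elementary simplification
\[
[x,T^{-1}]\,T^{-1}=xT^{-1}x^{-1}=(T^{-1})^{x}.
\]
Applying this to $x=a^ib^{i-1-j}$ turns the inner product $R_i:=\prod_{j=0}^{i-1}[a^ib^{i-1-j},T^{-1}]T^{-1}$ into a pure product of conjugates of $T^{-1}$, and pulling out the common prefix $a^i$ gives $R_i=a^i\!\left(\prod_{j=0}^{i-1}(T^{-1})^{b^{i-1-j}}\right)\!a^{-i}$.

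For the inductive step, write $W_n$ for the proposed right-hand side and set $p=n-1$. Factoring the common prefix $\prod_{i=1}^{n-2}R_i$ out of both $W_n$ and $W_{n-1}$ yields the relation $W_n=W_{n-1}\,b^{-p}a^{-p}\,R_{p}\,a^{n}b^{n}$. Granting the induction hypothesis $W_{n-1}=(ab)^{n-1}$, it then suffices to prove that $b^{-p}a^{-p}R_p\,a^{n}b^{n}=ab$. Using the factorization of $R_p$ above, this collapses to the single claim that the inner product
\[
S_p:=(T^{-1})^{b^{p-1}}(T^{-1})^{b^{p-2}}\cdots (T^{-1})^{b}\,(T^{-1})
\]
equals the commutator $[b^{p},a]$; once this is in hand, $R_p=a^{p}[b^{p},a]a^{-p}=a^{p}b^{p}a\,b^{-p}a^{-(p+1)}$, and substituting into $b^{-p}a^{-p}R_p\,a^{n}b^{n}$ leaves a one-line telescoping cancellation to $ab$.

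Finally, the identity $S_p=[b^{p},a]$ is nothing but lemma \ref{trick}: since $T^{-1}=[b,a]$, that lemma gives $[b^{p},a]=[b,a]^{b^{p-1}}[b,a]^{b^{p-2}}\cdots[b,a]^{b}[b,a]=S_p$. The \emph{main obstacle} is conceptual rather than computational, and it is worth flagging explicitly: unlike the computation of $(ab)^n$ in $H_n$ carried out in lemma \ref{ramification}, where $T$ is central and one simply accumulates the power $T^{\binom{n}{2}}$, here in $F_2$ the element $T$ does \emph{not} commute with $a$ or $b$. The entire content of the statement is therefore the correct bookkeeping of the nested conjugations by the words $a^i$ and $b^{m}$. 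The care needed is to keep these conjugating words intact throughout, so that the product telescopes exactly onto lemma \ref{trick}, rather than attempting to commute $T$ past $a$ and $b$.
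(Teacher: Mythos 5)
Your proof is correct, and while it shares the paper's global skeleton (induction on $n$, with the inductive step devoted to the newly appearing inner block), the way you dispose of that block is genuinely different. You first collapse each factor via $[x,T^{-1}]T^{-1}=(T^{-1})^{x}$, pull out the common prefix to get $R_p=a^{p}\bigl((T^{-1})^{b^{p-1}}\cdots(T^{-1})^{b}(T^{-1})\bigr)a^{-p}$, and then identify the inner product with $[b^{p},a]$ by lemma \ref{trick}, after which only a telescoping cancellation remains. The paper never invokes lemma \ref{trick} at this point; instead it establishes, by an inner recursion based on the swap $ba=T^{-1}ab$ and inserted identity words, the auxiliary identity
\[
a^{\mu}b^{\lambda}ab=\prod_{j=0}^{\lambda-1}[a^{\mu}b^{\lambda-1-j},T^{-1}]T^{-1}\cdot a^{\mu+1}b^{\lambda+1},
\]
and then feeds $(ab)^{k+1}=E(k)\,a^{k}b^{k}\,ab$ into it to obtain $E(k+1)\,a^{k+1}b^{k+1}$. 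Your route is more economical: it reuses a lemma the paper has already stated, makes transparent that every factor is just a conjugate of $T^{-1}$, and replaces the inner recursion by a one-line cancellation. The paper's route is constructive rather than verificational---it shows how the commutator factors are generated one swap at a time---and it yields the more general intermediate identity, valid for an arbitrary prefix $a^{\mu}b^{\lambda}$ rather than only the one arising in the induction. Your closing observation that the entire content is conjugation bookkeeping (since $T$ is central in $H_n$ but not in $F_2$, where lemma \ref{ramification} lets the powers of $T$ simply accumulate) is accurate and matches the paper's ``nested conjugations'' viewpoint.
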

 \begin{proof} We begin by computing the $n=2$ case:

  \begin{align*}
    (ab)^2 &= abab^{-1} a^{-1} a bb = a[b,a]a^{-1} aa bb \\
    &= a [b,a] a^{-1} aba^{-1}b^{-1} bab^{-1} \cdot abb \\
    &= a T^{-1} a^{-1} T bab^{-1} \cdot abb \\
    &= [a,T^{-1}]T^{-1} a^2 b^2,
  \end{align*} and the $n=3$ case: 

  \begin{align*}
    (ab)^3 &= (ab)^2 ab = [a,T^{-1}]T^{-1} a^2 b^2 ab \\
    &= [a,T^{-1}]T^{-1} a^2 b \cdot ba \cdot (b^{-1}a^{-1}ab) \cdot b \\
    &= [a,T^{-1}]T^{-1} a^2 b T^{-1} \cdot ab^2 \\
    &=  [a,T^{-1}]T^{-1} a^2 b T^{-1} \cdot (b^{-1}a^{-2}TT^{-1}a^2b) \cdot ab^2 \\
    &= [a,T^{-1}]T^{-1} [a^2b,T^{-1}]T^{-1} \cdot a^2bab^2 
  \end{align*} and 
\[a^2bab^2 = a^2ba \cdot (b^{-1}a^{-1} a^{-2}T T^{-1} a^2 ab) \cdot b^2 = [a^2,T^{-1}]T^{-1}a^3b^3,\] thus 
\[(ab)^3 =   [a,T^{-1}]T^{-1} \cdot  [a^2b,T^{-1}]T^{-1} \cdot [a^2,T^{-1}]T^{-1}a^3b^3, \] and with the nested conjugations trick
\[(ab)^3 = [a,T^{-1}] \cdot [a^2b,T^{-1}]^{T^{-1}} \cdot [a^2,T^{-1}]^{T^{-2}} \cdot T^{-3} \cdot a^3b^3.\] 
  Let
%To write the above as $\Z[H_n]$-elements we use again the nested conjugations trick, as in the decomposition of $a^n - (a^n)^\beta$ in the proof of lemma \ref{2elements} but with $T^{n(n-1)/2}$ instead of $T^n$, to write this additively in $R_{\Heis}/R^{\prime}_{\Heis}$. Let
\[ E(k) := \prod\limits_{i=1}^{k-1} \left(\prod\limits_{j=0}^{i-1} [a^ib^{i-1-j},T^{-1}]T^{-1}\right), \] such that
\[ (ab)^k = E(k) a^kb^k. \]
We prove the following claim, for positive integers $\mu, \lambda$:
\[a^\mu b^\lambda ab = \prod\limits_{j=0}^{\lambda-1} [a^\mu b^{\lambda-1-j},T^{-1}]T^{-1} a^{\mu+1} b^{\lambda+1}. \]
Indeed:
\begin{align*} a^\mu b^\lambda ab &= a^\mu b^{\lambda-1} ba (b^{-1}a^{-1}ab) b \\
  &= a^\mu b^{\lambda-1} T^{-1} ab^2 \\
  &= a^\mu b^{\lambda-1}T^{-1} (b^{-(\lambda-1)}a^{-\mu} T T^{-1} a^\mu b^{\lambda-1}) ab^2 \\
  &= [a^\mu b^{\lambda-1}, T^{-1}] T^{-1} a^\mu b^{\lambda-1}ab^2,
\end{align*} and the result follows from recursion. Thus,
\begin{align*}
  (ab)^{k+1} &= (ab)^k ab = E(k) a^kb^k ab \\
  &= E(k) \!\cdot\! \prod\limits_{j=0}^{k-1} [a^k b^{k-1-j},T^{-1}]T^{-1} \cdot a^{k+1}b^{k+1}\\
  &= E(k+1) a^{k+1}b^{k+1}.
\end{align*} 
 \end{proof} We have written $(ab)^n$ into a succesion of elements of the form $[*,T^{-1}]T^{-1}$, so the nested conjugations trick will be applied. Notice that $T^{n(n-1)/2}$ is $\frac{n-1}2 T^n$ 
 in additive notation for odd $n$, but we cannot write this as an element in $R_{\Heis}$ for even $n$. Using the above and that $[*,T^{-1}] = -[*,T]^{\tau^{n-1}}$, we have in $R_{\Heis_n}^{ab}$ for odd $n$:
\[(ab)^n = - \sum\limits_{i=1}^{n-1} \sum\limits_{j=0}^{i-1} [a^ib^{i-1-j},T]^{\tau^{\left(n-1 - \frac{i(i-1)}2 - j\right)}} + \frac{(n-1)}2 T^{-n} + a^n + b^n, \] and for even $n$ we make a slight adjustment to 
the nested conjugations trick
\begin{align*}
(ab)^n T^{-\frac{n}2} &= \prod\limits_{i=1}^{n-1} \left(\prod\limits_{j=0}^{i-1} [a^ib^{i-1-j},T^{-1}]T^{-1} \right) 
\!\cdot\! 
T^{\frac{n(n-1)}2} 
\!\cdot\! 
T^{-\frac{n(n-1)}2} 
\!\cdot\!
 T^{-\frac n2}T^{\frac n2} 
\!\cdot\! a^n b^n 
\!\cdot\! 
 T^{-\frac n2} \\
&= \!-\! \sum\limits_{i=1}^{n-1} \sum\limits_{j=0}^{i-1} [a^ib^{i-1-j},T]^{\tau^{\left(n-1 - \frac{i(i-1)}2 - j\right)}} 
\!+\! 
\left(\frac{n}2-1\right) T^{-n} 
\!+\! (a^n)^{\tau^{\frac n2}}
 \!+\! (b^n)^{\tau^{\frac n2}}
 \!.
\end{align*} 

Using the above formulas and the decomposition of $[a^ib^j,T]$ one could, after some computations, derive a basis with of $R_{\Heis}^{ab}$ with the conjugates of $(ab)^n$ included as generators, but we will not be performing this task. Notice that, in the even case even though $(ab)^n$ is not in $R_{\Heis}$, we expect from the ramification index over the point $\infty$ for $(ab)^{2n}$ to be included, which happens as
%\[T^k ((ab)^n)^{a^i} T^{-k - \frac n2} + T^{k+\frac{n}{2}}((ab)^n)^{a^i} T^{-k} = ((ab)^{2n})^{\alpha^i \tau^k}, \ 0\leq k \leq \frac n2 -1. \] 
\[ (ab)^n T^{-\frac n2} + ((ab)^n T^{-\frac n2})^{\tau^{\frac n2}} = (ab)^{2n},\] and this is stabilized under the action of $\tau^{\frac n2}$, which implies we have the elements $((ab)^{2n})^{\alpha^i \tau^k}, \ 0\leq i \leq n-1, 0\leq k \leq \frac n2 -1$ in $R_{\Heis_n}$ for even n.

\subsection{Elements stabilized} \label{subsec:stabilized}

In this section we will partially describe the fundamental group of the open Heisenberg curve as
\[R_{\Heis_n} = \langle a_1,b_1,\ldots, a_g,b_g, \gamma_1,\ldots,\gamma_m \mid \gamma_1 \gamma_2 \cdots \gamma_m \cdot [a_1,b_1][a_2,b_2]\cdots [a_g,b_g] =1 \rangle,\] where 
$g$ is the genus of the Heisenberg curve and $m$ is the number of branched points of the cover. Combining the Schreier index formula with the above description we have
\[2g + m -1 = n^3 + 1,\] from which we expect that $m$ is $3n^2$ in the unramified case and $\frac 52 n^2$ in the ramified case. 
%The bases in lemma \ref{basis} and \ref{basis2} will now be convenient, as no further change will be required. From the description of $(ab)^n$ in the unramified case (resp. $(ab)^nT^{-\frac n2}$ in the ramified case) we can add $[b,T]^{\alpha^{n-2}\tau^{n-1}\beta^{n-4}}$ (resp. $[b,T]^{\alpha^{n-2}\tau^{\frac n2-1}\beta^{n-4}}$) as a non free generator. 
%We have the following description of the stabilized elements $\gamma_m$ and a basis for the homology of the closed Heisenberg curved, as defined below, by quotiening out the $\gamma_m$ elements.
The decomposition of $(ab)^n$ (or $(ab)^nT^{\frac n2}$ accordingly) suggests that some free generators $[a^ib^j,T]$, or equivalently some conjugates of $[a,T], [b,T]$ can be swapped with the $n^2-1$ conjugates of $(ab)^n$ 
(or accordingly with the conjugates of $(ab)^nT^{\frac n2}$, and in a second base change with the $\frac{n^2}{2}-1$ conjugates of $(ab)^{2n})$. Thus, the above presentation of $R_{\Heis}$ can be realized with $a_i,b_i$ being the elements $[a,T]^{\alpha^i \tau^k}, 
[b,T]^{\alpha^i\beta^j\tau^k}$ for all the indices from proposition \ref{prop:basis} that remain after the supposed base change, and the elements $\gamma_i$ with their stabilizers are depicted in the following table:
\begin{longtable}[c]{c  l  c}
  \toprule
  Invariant element $\gamma_m$ &  Index $m$ & Fixed by\\
  \tabularnewline
  \noindent Unramified case $(n,2)=1$: \\
  \midrule
  \endhead
  $(a^n)^{\tau^k\beta^i}$, 
  $1 \leq k,i \leq n$
  & $1\leq m \leq n^2$  
  & $ \langle \alpha \tau^i \rangle$
  \\
  $(b^n)^{\alpha^i \tau^k}$,
  $1 \leq k,i \leq n$
  & $n^2+1 \leq m \leq 2n^2$ & $\langle \beta \tau^{n-i} \rangle$
  \\
  $((ab)^n)^{\alpha^i \tau^k}$,
  $1 \leq k,i \leq n$
   & $2n^2 + 1 \leq m \leq 3n^2$ & $\langle \alpha \beta \tau^{n-i}\rangle$ \\
  \bottomrule
  \tabularnewline
  Ramified case $(n,2)=2$: \\
  \midrule
  $(a^n)^{\tau^k\beta^i}$,
  $1 \leq k,i \leq n$ 
  & $1\leq m \leq n^2$ & $ \langle \alpha \tau^i \rangle$\\
  $(b^n)^{\alpha^i \tau^k}$,
  $1 \leq k,i \leq n$ & $n^2+1 \leq m \leq 2n^2$  & $\langle \beta \tau^{n-i} \rangle$\\
  $((ab)^{2n})^{\alpha^i \tau^k}$,
  $\substack{1 \leq i \leq n
  \\1 \leq k \leq \frac{n}{2}}$
  & $2n^2 + 1 \leq m \leq \frac 52n^2$ & $\langle \alpha \beta \tau^{n-i}\rangle, \langle \tau^{\frac n2}\rangle$ \\
  \bottomrule
  \end{longtable}

\begin{definition} Let $\Gamma$ be the free $\Z$-module generated by $\langle \gamma_1,\ldots \gamma_{3n^2} \rangle$ in the unramified case or $\langle \gamma_1,\ldots,\gamma_{\frac{5}2 n^2}\rangle$ in the ramified case. 
  Equivalently, we can set $\Gamma$ to be $R_{\Heis}\cap \langle a^n,b^n, (ab)^n\rangle$ in both cases.
\end{definition}

We denote by $X_H$ the Heisenberg curve as the ramified cover $X_H \rightarrow \mathbb{P}^1$ and we have that the homology group of the curve is 

\[H_1 (X_H,\Z) = \frac{ R_{\Heis}/R^{\prime}_{\Heis}}{\Gamma}.\] Having the $\Z[H_n]$-basis of $R^{ab}_{\Heis}$ in mind, the natural question now is if both $[a,T]$ and $[b,T]$ 
generate the homology as a $\Z[H_n]$-module. Although the decomposition of $(ab)^n$ is very complicated and contains many non-free elements, we can extract from it enough information to asnwer this.

\begin{proposition}\label{prop:basishomology}
For $n=3$ the homology of the Heisenberg curve is generated by $[a,T]$ as a $\Z[H_3]$-module. For $n\geq 4$ it is generated by both $[a,T], [b,T]$ as a $\Z[H_n]$-module, this means they belong in different classes $\mod\Gamma$.
\end{proposition}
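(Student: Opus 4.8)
The plan is to first reduce the statement to a single question about the generator $[a,T]$, and then to treat the two ranges of $n$ separately. Recall that $H_1(X_H,\Z)=R_{\Heis}^{ab}/\Gamma$ with $\Gamma=R_{\Heis}\cap\langle a^n,b^n,(ab)^n\rangle$. By Proposition~\ref{prop:basis} the module $R_{\Heis}^{ab}$ is generated over $\Z[H_n]$ by $a^n,b^n,[a,T],[b,T]$, and the classes of $a^n,b^n$ together with all their conjugates lie in $\Gamma$; hence in $H_1$ the module is already generated by the classes of $[a,T]$ and $[b,T]$. The only surviving relations among their conjugates are those forced by $\Gamma$, i.e.\ by the vanishing of the conjugates of $(ab)^n$ when $n$ is odd, respectively of $(ab)^{2n}$ and the half-elements $(ab)^nT^{-n/2}$ when $n$ is even (Lemma~\ref{ramification}). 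Passing the decomposition of $(ab)^n$ from Section~\ref{subsec:decomp} through Lemma~\ref{2elements} and discarding the $a^n,b^n$ summands, each such element becomes a $\Z[H_n]$-linear relation purely among conjugates of $[a,T]$ and $[b,T]$, so $H_1$ is exactly the cokernel of these relations on $\Z[H_n]\{[a,T],[b,T]\}$. Finally, the automorphism of $F_2$ swapping $a\leftrightarrow b$ preserves $R_{\Heis}$ (its defining relators are permuted up to conjugacy) and induces an involution on $H_1$ carrying the class of $[a,T]$ to that of $[b,T^{-1}]=-[b,T]^{\tau^{n-1}}$, thereby interchanging the submodules $\Z[H_n][a,T]$ and $\Z[H_n][b,T]$. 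It therefore suffices to decide, for each $n$, whether $[a,T]$ alone generates $H_1$.

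\emph{Case $n=3$.} Here I would make the relation coming from $(ab)^3\in\Gamma$ explicit, using $[a^ib^j,T]=[b,T]^{\alpha^i\sum_\lambda\beta^\lambda}+[a,T]^{\sum_\lambda\alpha^\lambda}$ together with the decomposition of $T^{-3}$ from Lemma~\ref{2elements}. This relation and its $\alpha^i\tau^k$-conjugates form a finite linear system over $\Z$ that one solves to express every conjugate of $[b,T]$ through conjugates of $[a,T]$ modulo $\Gamma$; this is precisely the computation recorded in the introductory example and checked by the SageMath script. Equivalently, since $H_1(X_{H_3},\F)=\F\chi_{101}\oplus\F\chi_{202}$ is a sum of two distinct one-dimensional characters, it is enough to verify that the class of $[a,T]$ has nonzero projection onto each summand (apply the two central idempotents), whence its $\Z[H_3]$-orbit spans $H_1$.

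\emph{Case $n\ge 4$.} Here I would show that $[a,T]$ cannot generate $H_1$ by itself, which by the symmetry above forces both generators. The cleanest route is through character multiplicities: any single generator $v$ produces a cyclic module whose $\chi$-isotypic component has multiplicity at most $\dim\chi$, so it suffices to exhibit one irreducible character of $H_n$ occurring in $H_1$ with multiplicity strictly larger than its dimension, making $H_1$ non-cyclic. The relevant multiplicities are exactly the quantities $n/\gcd(n,j)-z_j(i,s)$ computed in Section~\ref{sec:Alexander}, and for $n\ge 4$ the range $0\le j\le n-3$ of genuine $\beta$-powers appearing in the $[b,T]$-part of the basis in Proposition~\ref{prop:basis} first becomes nonempty, which is what produces such an over-multiplicity. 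One then concludes that $H_1$ is not cyclic, hence is generated neither by $[a,T]$ nor by $[b,T]$ alone, while the reduction above shows the two together do generate; in particular $[b,T]\notin\Z[H_n][a,T]\pmod\Gamma$.

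The main obstacle is precisely this non-generation for $n\ge 4$: proving that no $\Z[H_n]$-combination of conjugates of $[a,T]$ equals $[b,T]$ modulo $\Gamma$. Rank counting alone does not settle it, since already for $n=4$ a cyclic and a non-cyclic $\Z[H_n]$-module of the correct $\Z$-rank are both conceivable, so the argument must detect the full module structure. I expect to obtain this either by importing the isotypic multiplicities of Section~\ref{sec:Alexander} as above, or, if a self-contained argument is preferred at this point, by constructing an explicit $\Z[H_n]$-linear functional on $R_{\Heis}^{ab}$ that annihilates $\Gamma$ and every conjugate of $[a,T]$ yet is nonzero on a suitable $\beta$-conjugate of $[b,T]$; checking such a functional against all of the $(ab)^n$-relations is the technical heart of the proof.
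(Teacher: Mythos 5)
Your reduction via the involution $a\leftrightarrow b$ is correct and is a nice observation not present in the paper: the swap permutes the defining relators of $R_{\Heis}$ up to conjugacy and inversion, fixes $\Gamma$, and is semilinear over the automorphism $\alpha\mapsto\beta$, $\beta\mapsto\alpha$, $\tau\mapsto\tau^{-1}$ of $H_n$, so it carries $\Z[H_n][a,T]$ onto $\Z[H_n][b,T]$ modulo $\Gamma$. Your $n=3$ argument, in its primary form, is essentially the paper's proof: in the decomposition of $(ab)^3$, after expanding $T^{-3}$ through eq.\ (\ref{Tplus}), the only conjugate of $[b,T]$ that occurs is the one coming from $[a^2b,T]$, so acting by the inverse of its conjugator and reducing modulo $\Gamma$ expresses $[b,T]$ through conjugates of $[a,T]$. (Your ``equivalent'' character-theoretic check for $n=3$ is weaker than what is claimed: nonzero isotypic projections give generation over $\F[H_3]$, not over $\Z[H_3]$ --- a finite-index sublattice would pass the same test --- so the integral computation must remain the actual proof.)

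The $n\geq 4$ half has a genuine gap, and the specific criterion you propose can never be met. By the paper's own Theorem \ref{thm:homHeiss}, the multiplicity of $\chi_{ijs}$ in $H_1(X_H,\F)$ is $h_{ijs}=\frac{n}{\gcd(n,j)}-z_j(i,s)$ with $z_j(i,s)\geq 0$, while $\dim\chi_{ijs}=\frac{n}{\gcd(n,j)}$; hence \emph{every} irreducible occurs with multiplicity at most its dimension, for every $n$, and there is no ``over-multiplicity'' to exhibit. Worse for your strategy, this inequality implies that $H_1(X_H,\F)$ is a quotient of the regular representation $\F[H_n]$ and is therefore cyclic as an $\F[H_n]$-module for all $n$: no character-multiplicity argument over $\F$ can ever prove that a single element fails to generate. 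Whatever obstructs $[a,T]$ from generating for $n\geq 4$ must be either an integral phenomenon or a property of that particular element (its projections to the isotypic components being linearly dependent even though the module itself is cyclic); your fallback --- an explicit $\Z[H_n]$-functional annihilating $\Gamma$ and all conjugates of $[a,T]$ but not $[b,T]$ --- is precisely the missing content, and it is not constructed. For comparison, the paper's own argument for $n\geq 4$ is much more modest and entirely different: in the decomposition of $(ab)^n$ the terms $[a^ib^j,T]$ with $j\geq 2$ now occur, so several distinct conjugates $[b,T]^{\alpha^i(1+\beta+\cdots+\beta^{j-1})\tau^k}$ appear in the one available relation, and the $n=3$ elimination of $[b,T]$ fails; the proposition is then read as saying both generators must be retained. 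Note that this establishes only the failure of that particular elimination, so the stronger assertion you aim at ($[b,T]\notin\Z[H_n][a,T]\bmod\Gamma$) would indeed require the functional your plan defers --- but the route through character multiplicities cannot supply it.
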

%Let $\mathbb{F}$ be a field that contains $n$ different $n$-th roots of $1$. Our computation agrees with $\dim_{\mathbb{F}} H_1 (X_H,\Z)\otimes_{\Z} \mathbb{F} = 2g$ in both cases.
\begin{proof}
Observe in the decomposition of $(ab)^n$ that for $n=3$ the only non-zero power of $b$ is $1$, which means only one conjugate of $[b,T]$ appears, say $[b,T]^x$. We can apply the action of $x^{-1}$ such that $[b,T]$ appears 
and by reducing $\mod \Gamma$ every other element is of the form $[a,T]^y$, that is $[b,T]$ is linearly dependant on conjugates of $[a,T]$ modulo $\Gamma$. The same reasoning when $n\geq 4$ provides that $[b,T]$ will be linearly 
dependant to conjugates of $[a,T]$ and to at least one of its own conjugates modulo $\Gamma$.
\end{proof}We provide the exact computation when $n=3$, combining the decompositions of $(ab)^n, [a^ib^j,T]$ and $T^n$ we are left with
\[(ab)^n = -[a,T]^{\tau^2} - [b,T]^{\alpha^2 \tau}- [a,T]^{\alpha \tau} - (a^3)^\beta + \beta^3 \] and applying the action of $\alpha \tau^2$ and reducing $\mod \Gamma$ we get
\[ [b,T] \equiv - [a,T]^{\alpha \tau} - [a,T]^{\alpha^2}\mod \Gamma.\] 

\subsection{Braid group action} \label{subsec:braid} 

In this section we will describe the action of the braid group on $H_1(X_H,\Z)$. The braid group $B_3$ in this setting will be realized 
as an automorphism group of $F_2$ in terms of the faithfull Artin representation. See \cite[2.1]{MR4117575} for a more detailed description and 
an application to cyclic covers of the projective line, also \cite{MR4186523} for the action of $B_3$ on the closed Fermat surface. The group $B_3$ is generated by 
$\sigma_1$ and $\sigma_2$ such that
\[\sigma_1 (a) = aba^{-1} \quad \sigma_1 (b) = a \quad \sigma_2 (a) = a \quad \sigma_2 (b) = a^{-1}b^{-1}.\] We would like to have a well-defined action of $B_3$ on the fundamental group of the Heisenberg curve, as it happens with the Fermat curve, 
however there is an obstruction.
\begin{lemma} 
  \label{characteristic} %na paei sto braid group action? xrhsimopoieitai gia drash apo braids, des dedicata
  The group $R_{\Heis_n}$ is characteristic for odd $n$, which means every automorphism in $\operatorname{Aut}(F_2)$ keeps $R_{\Heis_n}$ invariant.
\end{lemma}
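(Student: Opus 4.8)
The plan is to reduce the problem to the free $2$-step nilpotent quotient of $F_2$, where $R_{\Heis_n}$ becomes a transparent subgroup. First I would observe that $H_n = F_2/R_{\Heis_n}$ is nilpotent of class $2$, so the third term $\gamma_3(F_2) := [[F_2,F_2],F_2]$ of the lower central series is contained in $R_{\Heis_n}$. Writing $H_\infty := F_2/\gamma_3(F_2)$ for the integral Heisenberg group, with generators $a,b$ and central $c = [a,b]$, every element has a unique normal form $a^xb^yc^z$. Under the surjection $H_\infty \twoheadrightarrow H_n$ the kernel $K_n := R_{\Heis_n}/\gamma_3(F_2)$ has index $n^3 = |H_n|$; since the subgroup $\{a^xb^yc^z : n\mid x,\, n\mid y,\, n\mid z\}$ also has index $n^3$ and is visibly contained in that kernel, the two coincide. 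In particular $K_n = \langle a^n, b^n, c^n\rangle$.

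Next, since $\gamma_3(F_2)$ is characteristic, any $\phi \in \operatorname{Aut}(F_2)$ descends to $\bar\phi \in \operatorname{Aut}(H_\infty)$, and because $R_{\Heis_n}$ is exactly the preimage of $K_n$ in $F_2$, it suffices to prove $\bar\phi(K_n) = K_n$. I would record the shape of $\bar\phi$: if $\phi$ induces the matrix $M = \left(\begin{smallmatrix} p_1 & p_2 \\ q_1 & q_2 \end{smallmatrix}\right) \in \mathrm{GL}_2(\Z)$ on $F_2^{\mathrm{ab}} = \Z^2$, then $\bar\phi(a) = a^{p_1}b^{q_1}c^{k_1}$ and $\bar\phi(b) = a^{p_2}b^{q_2}c^{k_2}$ for some $k_1,k_2 \in \Z$, while $[\cdot,\cdot]$ induces the standard alternating form on $F_2^{\mathrm{ab}}$, so that $\bar\phi(c) = c^{\det M}$ with $\det M = \pm 1$.

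The heart of the argument is a power computation in the class-$2$ group $H_\infty$. Using $b^sa^r = a^rb^sc^{-rs}$ and induction one gets the collection formula $(a^pb^q)^n = a^{np}b^{nq}c^{-\binom{n}{2}pq}$, hence $\bar\phi(a^n) = \bar\phi(a)^n = a^{np_1}b^{nq_1}c^{\,nk_1 - \binom n2 p_1q_1}$, with analogous expressions for $\bar\phi(b^n)$ and with $\bar\phi(c^n) = c^{\pm n}$. The first two coordinates are divisible by $n$ automatically; the central coordinate is $\equiv -\binom n2 p_1q_1 \pmod n$. Here is where oddness enters: for $n$ odd, $\binom n2 = n\cdot\tfrac{n-1}{2} \equiv 0 \pmod n$, so every central correction vanishes modulo $n$ and $\bar\phi$ carries the generators $a^n, b^n, c^n$ of $K_n$ back into $K_n$; thus $\bar\phi(K_n)\subseteq K_n$. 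Applying the same to $\phi^{-1}$ yields the reverse inclusion, so $\bar\phi(K_n) = K_n$ and therefore $\phi(R_{\Heis_n}) = R_{\Heis_n}$.

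I expect the main obstacle to be bookkeeping the central terms correctly in the collection formula, together with the (legitimate) reduction to checking only the generators of $K_n$, which is valid because $\bar\phi$ is a homomorphism. It is worth noting that this computation simultaneously explains the failure for even $n$: then $\binom n2 \equiv \tfrac n2 \pmod n$, so choosing $M$ with $p_1q_1$ odd produces an image of $a^n$ whose central coordinate is not divisible by $n$, whence $R_{\Heis_n}$ is \emph{not} characteristic — consistent with the statement being asserted only for odd $n$.
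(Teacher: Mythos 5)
Your proof is correct, and it takes a genuinely different route from the paper. The paper argues generator-by-generator: it writes $\Aut(F_2)$ in terms of the Nielsen transformations $n_a, n_b, n_{ab}, n_{ba}$, and checks by explicit commutator computations that each one sends the normal generators $a^n, b^n, [a,T], [b,T]$ of $R_{\Heis_n}$ back into $R_{\Heis_n}$; the only place oddness enters is the identity $n_{ab}(a^n)=(ab)^n$, which lies in $R_{\Heis_n}$ exactly in the unramified case by the ramification lemma (Lemma \ref{ramification}). You instead pass to the maximal class-$2$ quotient $F_2/\gamma_3(F_2)$, identify $R_{\Heis_n}/\gamma_3(F_2)$ with the congruence subgroup $\langle a^n,b^n,c^n\rangle$ of the integral Heisenberg group via the index count, and then treat \emph{all} automorphisms uniformly through their induced matrix in $\mathrm{GL}_2(\Z)$ and the collection formula $(a^pb^q)^n=a^{np}b^{nq}c^{-\binom{n}{2}pq}$. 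The two arguments hinge on the same arithmetic fact — $n\mid\binom{n}{2}$ precisely when $n$ is odd, which is also what drives the proof of Lemma \ref{ramification} — but your packaging buys two things: no dependence on a generating set of $\Aut(F_2)$, and the converse (failure of the characteristic property for even $n$, e.g.\ via $p_1q_1$ odd) falls out of the same computation, whereas the paper obtains it separately from the anomaly in $n_{ab}(a^n)$. What the paper's approach buys in exchange is the explicit transformation formulas for $[a,T]$ and $[b,T]$ under Nielsen moves, which it reuses later when computing the braid group action and the Burau representation; your argument, being filtered through the nilpotent quotient, does not record that finer information.
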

\begin{proof}
The group $R_{\Heis} \subset F_2 = \langle a,b\rangle$ is the normal closure of the elements $a^n,b^n, [a,T],[b,T]$ and the automorphism group of $F_2$ is generated by the Nielsen transformations 
below, see \cite[Th. 1.5, p.125]{bogoGrp}
\[\Aut (F_2) = \langle n_a, n_b, n_{ab}, n_{ba}\rangle \]
where 
\begin{align*}
 n_a(a)=a^{-1} &&  n_a(b)=b && n_b(a)=a && n_b(b)=b^{-1}  \\
 n_{ab}(a)=ab &&  n_{ab} (b)=b && n_{ba}(a)=a && n_{ba}(b)=ba
\end{align*}
% $n_a$ (resp. $n_b$) maps $a$ to $a^{-1}$ (resp. $b$ to $b^{-1}$) and the rest are unchanged, while $n_{ab}$ (resp. $n_{ba}$) maps $a$ to $ab$ (resp. $b$ to $ba$) and the rest are unchanged. 
We compute
\begin{align*}
n_a(a^n) &= a^{-n}, \\
n_a([a,T]) &= [a^{-1}, [a^{-1},b]] = [a,T]^{a^{-2}T^{-1}}, \\
n_a([b,T]) &= [b,[a^{-1},b]] = [b,[b,a]^{a^{-1}}] = [T,b]^{a^{-1}}, \\
n_{ab}(a^n) &= (ab)^n \in R_{\Heis_n} \textrm{ in the unramified case,} \\ 
n_{ab}([a,T]) &= [ab, [ab,b]] = [ab, [a,b]] = [b,T]^a \cdot [a,T], \\
n_{ab}([b,T]) &= [b,[ab,b]] = [b,T].
\end{align*} and the action of $n_b, n_{ba}$ is symmetrical.\end{proof}
%$n_1([a,b]^n) = n_1([a,b])^n = \left( [a^{-1},b] \right)^n  =  \left( a^{-1}bab^{-1}a^{-1}a \right)^n = a^{-1} [b,a]^n a$
%It is convenient to make the element $[a,b]$ show up
%$[a^{-1},b] = a^{-1}bab^{-1}a^{-1}a = a^{-1}[b,a]a = a^{-1}[a,b]^{-1}a $
%$[a^{-1},b]^{-1} = a^{-1}[a,b]a$
%\begin{align*}
 %   n_1(a[a,b]a^{-1}[a,b]^{-1}) &= a^{-1}[a^{-1},b]a[a^{-1},b]^{-1} \\
  %  &= a^{-2}[a,b]^{-1}a^2 a^{-1} [a,b] a \\
   % &= a^{-1}\left( a^{-1}[a,b]^{-1}a[a,b]\right) a
%\end{align*}
%where the element inside the parentheses is conjugate to the generator
%$\left(a[a,b]\right)^{-1} a[a,b]a^{-1}[a,b]^{-1} \left(a[a,b]\right) =  a^{-1}[a,b]^{-1}a[a,b] $
%\begin{align*}
 %   n_1(b[a,b]b^{-1}[a,b]^{-1}) &= b[a^{-1},b]b^{-1}[a^{-1},b] \\
  %  &= ba^{-1}[a,b]^{-1}ab^{-1}a^{-1}[a,b]a \\
   % &= a^{-1} \left( aba^{-1}[a,b]^{-1}ab^{-1}a^{-1}[a,b]\right) a \\
    %&= a^{-1} \left( aba^{-1}b^{-1}b[a,b]^{-1}ab^{-1}a^{-1}[a,b]\right) a \\
    %&= a^{-1} \left( [a,b]b[a,b]^{-1}ab^{-1}a^{-1}[a,b]\right) a \\
    %&= a^{-1} \left( [a,b]b[a,b]^{-1} b^{-1}b  ab^{-1}a^{-1}[a,b]\right) a \\
    %&= a^{-1} \left( [a,b]b[a,b]^{-1} b^{-1}\right) a
%\end{align*}
%$n_{12}(a^n) = a^nb^n$ 
%$n_{12}([a,b]^n) = [a,b]^n$ since $n_{12}([a,b])=[a,b]$, with the same reasoning
%$n_{12}([b,[a,b]]) = [b,[a,b]]$
%thus
%$n_{12}([a,[a,b]]) = [ab,[a,b]] = [b,[a,b]]^a \cdot [a,[a,b]]$
%The other cases follow symmetrically.
Notice that because of the anomaly in $n_{ab}(a^n)$, $R_{\Heis_n}$ cannot be characteristic for even 
$n$. In contrast, in the unramified case the previous lemma tells us that the braid group $B_3$ keeps the Heisenberg curve invariant, this means it has a well-defined action on $R_{\Heis}$ which induces an action on $H_1(X_H,\Z)$.

\begin{remark}
There are two things worth pointing out about the ramified case here, one is that $B_3$ keeps $\Gamma$ invariant and thus there is a well-defined action of $B_3$ on $H_1(X_H,\Z)$ nonetheless. The second thing is that since $\sigma_2(b^n) = ((ab)^{-n})^b \notin R_{\Heis}$ the braid group $B_3$ sends the Heisenberg curve to its conjugate curves defined by the orbit of $\sigma_2 (R_{\Heis})$. 
This is an interesting example in the field of moduli versus field of definition perspective, as discussed in \cite{DebesDouai97}, \cite{DebesEmsalem},
where for a base field $K$ a curve $X$ defined a priori on a separable closure $K_s$ might or might not be definable over $K$. For this question the isomorphic curves of $X$ with different models than $X$, after an action of an automorphism, are taken into account.
\end{remark}

To expand further on the previous remark, suppose that we work over $\overline{\Q}$ for simplicity. Following the definitions in \cite{DebesDouai97}, we have that $\Pi_{\overline{\Q}}(B^*)$ is the geometric fundamental group of $B^* = \mathbb{P}^1_{\overline{\Q}}-\{0,1,\infty\}$. The group $\Pi_{\overline{\Q}}(B^*)$ is canonically isomorphic to the profinite free group $\mathfrak{F}_{2}$ and fits the following split exact sequence \[1\rightarrow \Pi_{\overline{\Q}}(B^*) \rightarrow \Pi_{\Q}(B^*) \rightarrow \Gal(\overline{\Q}/\Q)\rightarrow 1, \] with the middle term $\Pi_{\Q}(B^*)$ being the arithmetic fundamental group of $B^*$. Through Galois theory, $G$-covers of $B^*$ correspond to surjective homomorphims $\Psi: \Pi_{\Q}(B^*) \rightarrow G$ and also the well-defined action up to inner automorphisms and sections of $\Gal(\overline{\Q}/\Q)$ on $\Pi_{\overline{\Q}}(B^*)$ induces an action on these covers. 

As the braid group $B_3$ can act on the topological generators of $\mathfrak{F}_2$ it has a similar action on the covers. Let $\sigma$ denote $\sigma_2 \in B_3$ and consider the homomorphisms
\[\Psi, \Psi^\sigma : \mathfrak{F}_2 \rightarrow  H_n, \] corresponding to quotiening by the profinite groups $\hat{R}_{\Heis_n}, \sigma(\hat{R}_{\Heis_n})$ respectively. We provide an interesting fact above these two covers in the next proposition.

\begin{proposition}
  Assume $n$ to be even. The Heisenberg curve and its $\sigma$-conjugate are not isomorphic over $\overline{\Q}$. 
\end{proposition}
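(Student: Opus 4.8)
The plan is to work entirely inside the $G$-cover formalism of \cite{DebesDouai97}, \cite{DebesEmsalem} set up just above, so that ``isomorphic over $\overline{\Q}$'' is read as isomorphic as covers of the \emph{pointed} base $B^{*}=\mathbb{P}^1_{\overline{\Q}}\setminus\{0,1,\infty\}$, i.e.\ through an isomorphism commuting with the two structure maps to $B^{*}$. Under this reading a geometric isomorphism between the cover attached to $\Psi$ and the one attached to $\Psi^\sigma$ is the same as an automorphism $\psi\in\Aut(H_n)$ with $\Psi^\sigma=\psi\circ\Psi$; since $\psi$ is bijective this forces $\ker\Psi^\sigma=\ker\Psi$, that is $\sigma(\hat R_{\Heis_n})=\hat R_{\Heis_n}$. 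So the whole statement reduces to showing that, for even $n$, the subgroup $R_{\Heis_n}$ is \emph{not} stabilised by $\sigma=\sigma_2$.

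First I would record the witness already produced in the preceding remark: $b^n\in R_{\Heis_n}$, whereas $\sigma_2(b^n)=((ab)^{-n})^{b}\notin R_{\Heis_n}$. Hence $\sigma_2(b^n)\in\sigma_2(R_{\Heis_n})\setminus R_{\Heis_n}$, the two kernels are distinct, and by the reduction above no $\overline{\Q}$-isomorphism of covers can exist.

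To make the obstruction conceptual rather than a single non-membership, I would phrase it through the local monodromy at the three marked points, which is an invariant of a cover of the pointed base and is preserved by any $\psi\in\Aut(H_n)$ because automorphisms preserve element orders. For $\Psi$ the inertia generators around $0,1,\infty$ are $a,b,(ab)^{-1}$, of orders $n,n,2n$ by Lemma \ref{ramification}. For $\Psi^\sigma$ the ramification index over the point $1$ is the order of $b$ in $F_2/\sigma_2(R_{\Heis_n})$, which equals the order of $\sigma_2^{-1}(b)=(ab)^{-1}$ in $H_n$, namely $2n$ for even $n$. Since no automorphism of $H_n$ can send an element of order $n$ to one of order $2n$, the two covers cannot be matched: $\sigma_2$ has relocated the unique $2n$-ramified point from $\infty$ to $1$. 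This is exactly the failure of $R_{\Heis_n}$ to be characteristic detected in Lemma \ref{characteristic}; for odd $n$ every inertia generator has order $n$, $R_{\Heis_n}$ is characteristic, and the covers coincide, so the evenness hypothesis is essential.

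The hard part, and the only genuine obstacle, is the very first reduction: it is crucial that the comparison takes place in the category of covers of the base with its three marked points retained, so that the inertia over each of $0,1,\infty$ is individually an invariant. If one instead allowed the base to move by a Möbius transformation permuting $\{0,1,\infty\}$ (for instance the involution $z\mapsto z/(z-1)$ interchanging $1$ and $\infty$), the $2n$-ramified point could be carried back into place and the distinction would dissolve. I would therefore make explicit that in the field-of-moduli setting of \cite{DebesDouai97} isomorphisms are taken over $\mathrm{id}_{B^{*}}$, equivalently that the structure map $X_H\to\mathbb{P}^1$ is recovered canonically from the $H_n$-action via the automorphism computation of \cite{MR4252293}; this pins the three branch points and legitimises the local-monodromy invariant used above.
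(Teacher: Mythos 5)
Your proof is correct and rests on exactly the same engine as the paper's: the observation (Lemma \ref{ramification} together with the remark preceding the proposition) that $(ab)^n$ lies in $\sigma_2(R_{\Heis_n})$ but not in $R_{\Heis_n}$, so the two surjections onto $H_n$ give inertia elements of different orders ($n$ versus $2n$), an obstruction that no automorphism of $H_n$ can repair. The only difference is packaging: the paper verifies the D\`ebes--Douai criterion by conjugation at the single element $ab$, whereas you phrase the obstruction as inequality of kernels plus the inertia-order invariant, which formally rules out even mere-cover isomorphism rather than only $G$-cover isomorphism --- but the underlying computation is identical, and your closing caveat about keeping the three marked points fixed is a fair observation rather than a gap.
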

\begin{proof} According to \cite{DebesDouai97}, the curves are isomorphic if and only if $\Psi, \Psi^\sigma$ have conjugate images, that is there is an element $h$ in $H_n$ such that
  \[\Psi(x) = h \Psi^\sigma(x) h^{-1}, \ \textrm{ for all }x \in \Pi_{\overline{\Q}}(B^*).\]
  Set $x$ to be $ab$, then $\Psi(ab) = \alpha \beta$ which is of order $2n$ in $H_n$, however, $\Psi^\sigma(ab)$ is of order $n$ since $(ab)^n$ is contained in $\sigma_2(R_{\Heis_n})$. Thus the above criterion is not satisfied and the result follows. 
\end{proof}

We describe now the action of $B_3$ on the $\Z[H_n]$-generators of $H_1(X_H,\Z)$ according to proposition \ref{prop:basishomology}, which will be useful to describe the Burau representation. Firstly, we have that
\[ \sigma_1 (T) = (T^{-1})^a, \quad  \sigma_2 (T)  = [b^{-1},a^{-1}] = (T^{-1})^{b^{-1}a^{-1}},\] and with a repeated use of 
lemma \ref{trick} we compute the following.
\begin{align*}
  \sigma_1 [a,T] &= -[b,T]^{\alpha \tau^{-1}} \\ 
  \sigma_1 [b,T] &= -[a,T]^{\alpha \tau^{-1}} \\ 
  \sigma_2 [a,T] &= [b,T]^{\beta^{-1}\tau^{-1} - \alpha^{-1}\beta^{-1}\tau^{-1}}  - [a,T]^{\alpha^{-1}\tau^{-1}} \\
  \sigma_2 [b,T] &= [b,T]^{(\alpha^{-1}\beta^{-1})^2\tau^{-1}} + [a,T]^{\alpha^{-1}\beta^{-1}\alpha^{-1}\tau^{-1}}.
\end{align*}

\subsubsection{Burau Representation} \label{subsubsec:burau}
We will see that  the action of $B_3$ on the generators of the homology of the Heisenberg curve induces a representation

\begin{equation}
\label{eq:Braid}  
\rho : B_3 \longrightarrow \operatorname{GL}(2,A),
\end{equation}
 where 
\[
  A= \Z\langle a^{\pm1},b^{\pm1}\rangle/ \langle [a,t],[b,t]), t = [a,b]\rangle  
\] 
and  
  $\Z\langle a^{\pm1},b^{\pm1}\rangle$
is the noncommutative Laurent 
polynomial ring in two variables over $\Z$. For a similar construction of the Burau 
representation for cyclic covers of $\mathbb{P}^1$ see \cite{MR4117575}.

Let $X_3 = \mathbb{P}^1\setminus\{0,1,\infty\}$ and denote by $\tilde{X}_3$ its 
universal covering space. We can obtain the Heisenberg curves $X_{H_n}$ as quotients 
of an abstract curve $Y$ defined as $\tilde{X}_3/I$ where $I$ corresponds to $\langle [a,t], [b,t] \rangle$. Then $X_{H_n}$ is the quotient $Y/J$, where $J$ corresponds to $\langle a^n,b^n\rangle$ as depicted in the following diagram.

\begin{center}
\begin{tikzcd}
  \tilde{X}_3 \arrow[rrd, no head] \arrow[rdd, "R_{\Heis_n}", no head] \arrow[ddd, "F_2"', no head] &                                    &                                                                                       \\
                                                                                                    &                                    & Y \arrow[ld, no head] \arrow[lldd, "(\Z \times \Z)\rtimes \Z", no head, bend left=49] \\
                                                                                                    & X_{H_n} \arrow[ld, "H_n", no head] &                                                                                       \\
  X_3                                                                                               &                                    &                                                                                      
  \end{tikzcd}
\end{center}
The homology group $H_1(Y,\Z)$ through similar techniques 
from this paper has a conjugation action by the discrete Heisenberg group  
\[
H=
\left\{
  \begin{pmatrix}
    1 & x & y \\
    0 & 1 & z \\
    0 & 0 & 1 
  \end{pmatrix}: x,y,z\in \mathbb{Z}
\right\} \cong  (\Z \times \Z)\rtimes \Z = \langle 
  a,t\rangle 
  \rtimes
  \langle b \rangle  
\]
 and is thus an $A$-module. Repeating the braid action computations from the previous section we get that this action 
introduces the  representation $\rho$ given in eq. (\ref{eq:Braid}) which is given by
\begin{align*}
  \rho(\sigma_1) &=
   \begin{pmatrix} 
  0 & -at^{-1} \\
  -at^{-1} & 0
\end{pmatrix}, \\ 
\rho(\sigma_2) &= \begin{pmatrix} 
  -a^{-1}t^{-1} & -a^{-1}b^{-1}a^{-1}t^{-1} \\
  (1-a^{-1})b^{-1}t^{-1} & (a^{-1}b^{-1})^2t^{-1}
\end{pmatrix}.
\end{align*}
Similar to \cite{MR4117575} we define 
\[
  \mathbb{Z}_\ell[\overline{ H}] = \lim_{\leftarrow \atop n} \mathbb{Z}_\ell
  [ H_{\ell^n}].
\]
Then, we have that 
\[
  H_1(Y,\Z_\ell)= \hat{R}_{\mathrm{Heis}}/\hat{R}_{\mathrm{Heis}}^{\prime},
\]
where $R_{\mathrm{Heis}}$ is the fundamental group of $Y$ and $\hat{R}_{\mathrm{Heis}_n}$ is its pro-$\ell$ completion, which is given by 
\begin{proposition}
  \label{prop:IharY}
  The group  $H_1(Y, \mathbb{Z}_\ell)$ is generated by both $[a,t], [b,t]$ as a $\Z_\ell[\bar{H}]$-module.
  \end{proposition}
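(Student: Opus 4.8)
The plan is to realize $H_1(Y,\Z_\ell)=\hat R_{\Heis}^{\mathrm{ab}}$ as the $\ell$-adically completed relation module of the presentation $H=\langle a,b\mid [a,t],[b,t]\rangle$ with $t=[a,b]$, and to prove the stated statement in its strong form, namely that each of $[a,t]$ and $[b,t]$ generates it over $\Z_\ell[\bar H]$. Since $Y$ covers the graph $X_3$ with deck group $H$, its cellular chain complex yields a short exact sequence $0\to R_{\Heis}^{\mathrm{ab}}\to \Z[H]^2\xrightarrow{\partial}\Z[H]$, under which $[a,t]$ and $[b,t]$ are sent to their Fox-derivative vectors and generate $\ker\partial$; the same holds after $\ell$-completion. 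By the symmetry exchanging $a$ and $b$ it then suffices to show that $[b,t]$ lies in the closed $\Z_\ell[\bar H]$-submodule generated by $[a,t]$.

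First I would produce an explicit $\Z_\ell[\bar H]$-linear relation between the two generators. Applying Lemma \ref{gtheory1} and Lemma \ref{trick} to expand $[ab,t]$, and combining with the Hall--Witt identity for the triple $a,b,t$ --- in which the term $[t,[a,b]]=[t,t]$ is trivial because $t=[a,b]$ --- one obtains in $\hat R_{\Heis}^{\mathrm{ab}}$ a relation of the shape $c_1\,[a,t]+c_2\,[b,t]=0$ with $c_1,c_2\in\Z_\ell[\bar H]$, whose exact coefficients are read off from the Fox derivatives of $[a,t]$ and $[b,t]$ reduced modulo $R_{\Heis}$. Were $c_2$ a unit of $\Z_\ell[\bar H]$ we would immediately solve $[b,t]=-c_2^{-1}c_1\,[a,t]$ and conclude, and symmetrically for $[a,t]$.

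The main obstacle --- and the step I expect to be decisive --- is precisely the invertibility of $c_2$. The coefficients produced by Hall--Witt lie in the augmentation ideal of $\Z_\ell[\bar H]$ (they are built from $\alpha-1$ and $\beta-1$), so they are not units in this local ring. By Nakayama's lemma the minimal number of generators of $\hat R_{\Heis}^{\mathrm{ab}}$ equals $\dim_{\F_\ell}\bigl(\hat R_{\Heis}^{\mathrm{ab}}\otimes_{\Z_\ell[\bar H]}\F_\ell\bigr)$, and a dimension shift in the relation-module sequence above identifies this space of coinvariants with $H_2(\bar H,\F_\ell)$. Thus the whole question reduces to the mod-$\ell$ Schur multiplier of the pro-$\ell$ Heisenberg group. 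This is exactly where the analogy with the cyclic covers of \cite{MR4117575} must be handled with care: there the deck group is abelian and the pertinent $H_2$ vanishes, forcing cyclicity, whereas $H_2(\bar H)$ is visibly nonzero, in line with Proposition \ref{prop:basishomology}, where for $n\ge 4$ both $[a,t]$ and $[b,t]$ are genuinely needed at the finite level $H_n$.

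To settle this crux I would pass to the limit over the finite quotients $H_{\ell^n}$ and apply Nakayama level by level: compute the coinvariant dimension over each $\Z_\ell[H_{\ell^n}]$ using the explicit decompositions of $(ab)^n$, $[a^ib^j,T]$ and $T^n$ from Section \ref{subsec:decomp}, and take the inverse limit. This determines whether the completed coinvariants collapse to dimension one, giving the single-generator statement, or remain two-dimensional, in which case Proposition \ref{prop:IharY} should be read as asserting that $[a,t]$ and $[b,t]$ generate $H_1(Y,\Z_\ell)$ jointly. Carrying out this Schur-multiplier computation for $\bar H$, and reconciling it with the finite-level count of Proposition \ref{prop:basishomology}, is the heart of the argument and the point I would expect to contain all the real difficulty.
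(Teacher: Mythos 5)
Your opening paragraph already proves the proposition as the paper means it, and by the same route the paper implicitly takes: Proposition \ref{prop:IharY} is stated without a separate proof precisely because it is the relation-module analogue of Proposition \ref{prop:basis}. For $Y$ the group $R_{\mathrm{Heis}}$ is the normal closure in $F_2$ of the two relators $[a,t]$ and $[b,t]$ alone, so every element of it is a product of conjugates of these; in the abelianization, conjugation by $F_2$ factors through $\bar{H}$, hence the classes of $[a,t],[b,t]$ generate $\hat{R}_{\mathrm{Heis}}/\hat{R}_{\mathrm{Heis}}^{\prime}$ topologically over $\Z_\ell[\bar{H}]$. Note the paper's convention, made explicit in Proposition \ref{prop:basishomology} (``generated by both $[a,T],[b,T]$ \dots this means they belong in different classes''): ``both'' asserts joint generation and, if anything, advertises that neither element suffices alone.

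Your remaining three paragraphs therefore chase a ``strong form'' that the paper never claims and that is in fact false, and you leave its resolution open, proposing an inverse-limit computation over the quotients $H_{\ell^n}$. That detour closes in one line with the tools you already set up. Since $R_{\mathrm{Heis}}\subseteq[F_2,F_2]$, Hopf's formula gives $\left(\hat{R}_{\mathrm{Heis}}^{\mathrm{ab}}\right)_{\bar{H}}=\hat{R}_{\mathrm{Heis}}/[\hat{F}_2,\hat{R}_{\mathrm{Heis}}]\cong H_2(\bar{H},\Z_\ell)\cong\Z_\ell^2$: the discrete Heisenberg group is the fundamental group of a closed aspherical $3$-manifold with torsion-free $H_1\cong\Z^2$, so $H_2\cong\Z^2$ by Poincar\'e duality, and this persists under pro-$\ell$ completion. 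By Nakayama over the local ring $\Z_\ell[\bar{H}]$, the minimal number of topological generators of $H_1(Y,\Z_\ell)$ is exactly $\dim_{\F_\ell}H_2(\bar{H},\F_\ell)=2$; so your coefficient $c_2$ can never be a unit, the coinvariants never collapse to dimension one, and the two-generator statement is sharp. What you single out as ``the heart of the argument'' is thus the answer to a different (and false) statement, settled at once by Hopf's formula rather than by the level-by-level limit you propose; the proof of the proposition itself is complete at the end of your first paragraph and agrees with the paper's.
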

Therefore, each element $\sigma \in \mathrm{Gal}(\overline{\mathbb{Q}}/ \mathbb{Q})$ acts on  $H_1(Y,\mathbb{Z}_\ell)$, using Ihara's action given in eq. (\ref{eq:IharaAction}), on the generators of $H_1(Y, \mathbb{Z}_\ell)$ given in proposition \ref{prop:IharY}. 

% \newpage

\section{Alexander modules} \label{sec:Alexander}
Let $\mathbb{F}$ be a field of characteristic $0$ containing the $n$ different $n$-th roots of unity. In this section we will use the theory of Alexander modules and the Crowell exact sequence, as described in Chapter $9$ from 
\cite{Morishita2011-yw}, to describe the homology $H_1(X_H, \Z)$ of the closed Heisenberg curve as an $\mathbb{F}[H_n]$-module in terms of the characters of the Heisenberg group. 

Let $F_2$ be the free group $\pi_1 (\mathbb{P}^1 \setminus \{0,1,\infty \},x_0)$ as before with generators 
$a,b$ and $R_{\Heis}$ as defined previously. Let $\Gamma$ be $\langle a^n, b^n, (ab)^n \rangle$,  we can describe $G := F_2 / R_{\Heis}\cap\Gamma$ as  %in the unramified case or $\langle  a^n, b^n, (ab)^{2n} \rangle$ in the ramified case as in lemma \ref{pregenus}.

\[ G = \langle a,b, ab \mid \ a^{e_1} =  b^{e_2} = (ab)^{e_3} = a \cdot b \cdot (ab)^{-1} = 1 \rangle.  \] where $e_1 = e_2 = n$ and $e_3 = n$ or $2n$ if $n$ is odd or even respectively.
Let $\overline{R}_{\Heis}:= R_{\Heis}/R_{\Heis}\cap \Gamma \cong R_{\Heis}\cdot \Gamma / \Gamma$. This reduces to $R_{\Heis}/\Gamma$ in the unramified case. We have a quotient map 
\[
\psi : F_2 / \Gamma \rightarrow F_2 / R_{\Heis} \cong H_n.
\] 
Set also $\varepsilon: \Z[H_n] \rightarrow \Z$ to be the augmentation map $\sum a_g g \mapsto \sum a_g$. 

We consider $\mathcal{A}_{\psi}$ to be the {\em Alexander module}, a free $\Z$-module

\[\mathcal{A}_\psi = \left( \bigoplus\limits_{g \in F_2/\Gamma} \Z[H_n] dg \right) / \langle d(g_1g_2) - dg_1 -\psi(g_1)dg_2: \ g_1,g_2 \in F_{2}/\Gamma \rangle_{\Z[H_n]} \] where $\langle \cdots \rangle_{\Z{H_n}}$ is considered to be a $\Z[H_n]$ module generated by the elements appearing inside. 

By the above definitions, $\overline{R}_{\Heis}^{ab}$ is $H_1(X_H,\Z)$. Define the map $\theta_1  :\overline{R}_{\Heis}^{ab} \rightarrow \mathcal{A}_\psi$ given by
\[\overline{R}_{\Heis}^{ab} \ni n \mapsto dn \] and the map $\theta_2 : \mathcal{A}_\psi \rightarrow \Z[H_n]$ to be the homomorphism induced by 

\[dg\mapsto \psi(g)-1 \ \textrm{ for } g \in G.\] We have the following exact sequence
\[
  \begin{tikzcd}
    1 \arrow[r] & \overline{R}_{\Heis} \arrow[r] & G \arrow[r, "\psi"] & H_n \arrow[r] & 1
    \end{tikzcd} 
\]
from which we obtain the Crowell exact sequence of $\Z[H_n]$-modules \cite[sec. 9.2]{Morishita2011-yw}

\begin{equation}\label{Crowell}
\centering
\begin{tikzcd}
  1 \arrow[r] & {\overline{R}_{\Heis}^{ab} = H_1(X_H,\Z)} \arrow[r, "\theta_1"] & \mathcal{A}_\psi \arrow[r, "\theta_2"] & {\Z[H_n]} \arrow[r, 
  "\varepsilon"] & \Z \arrow[r] & 1.
  \end{tikzcd}
\end{equation}

For the following proposition, let $F_3$ be the free group generated by $x_1,x_2,x_3$, we use that there is a natural epimorphism $\pi : F_3 \rightarrow G$ 
mapping $x_1 \mapsto a, x_2 \mapsto b$ and $x_3 \mapsto (ab)^{-1}$.

\begin{proposition} The module $\mathcal{A}_\psi$ admits a free resolution as a $\Z[H_n]$-module: 

 \begin{equation}\label{resolution}
  \begin{tikzcd}
    {\Z[H_n]^4 } \arrow[r, "Q"] & { \Z[H_n]^3} \arrow[r] & \mathcal{A}_\psi \arrow[r] & 0
    \end{tikzcd}
 \end{equation} where $4$ and $3$ appear as the number of relations and generators of $G$ respectively. The map $Q$ is expressed in form of Fox derivatives 
 \cite[sec. 3.1]{BirmanBraids}, \cite[chap. 8]{Morishita2011-yw} as follows, let $\beta_1,\beta_2,\beta_3, \beta_4 \in \Z[H_n]$, then
\[ \begin{pmatrix}\
 \beta_1 \\
 \beta_2 \\ 
 \beta_3 \\
 \beta_4
 \end{pmatrix} \mapsto 
 \begin{pmatrix} \psi \pi (\frac{\partial x_1^{e_1}}{\partial x_1}) & \psi \pi (\frac{\partial x_2^{e_2}}{\partial x_1}) & \psi \pi (\frac{\partial x_3^{e_3}}{\partial x_1}) & \psi \pi (\frac{\partial x_1 \cdot x_2 \cdot x_3}{\partial x_1}) \\
  \psi \pi (\frac{\partial x_1^{e_1}}{\partial x_2}) & \psi \pi (\frac{\partial x_2^{e_2}}{\partial b}) & \psi \pi (\frac{\partial x_3^{e_3}}{\partial x_2}) & \psi \pi (\frac{\partial x_1 \cdot x_2 \cdot x_3}{\partial x_2}) \\
  \psi \pi (\frac{\partial x_1^{e_1}}{\partial x_3}) & \psi \pi (\frac{\partial x_2^{e_2}}{\partial x_3}) & \psi \pi (\frac{\partial x_3^{e_3}}{\partial x_3}) & \psi \pi 
  (\frac{\partial x_1 \cdot x_2 \cdot x_3}{\partial x_3})
 \end{pmatrix} \cdot  \begin{pmatrix}\
  \beta_1 \\
  \beta_2 \\ 
  \beta_3 \\
  \beta_4
  \end{pmatrix}\]
  where $\pi$ is the natural epimorphism $F_3 \rightarrow G$ as defined previously.
\end{proposition}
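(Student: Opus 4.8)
The plan is to derive the presentation \eqref{resolution} as a direct application of Fox's free differential calculus, specialised to the finite presentation $\pi\colon F_3\to G$ whose relators are $r_1=x_1^{e_1}$, $r_2=x_2^{e_2}$, $r_3=x_3^{e_3}$ and $r_4=x_1x_2x_3$. Recall that $\mathcal{A}_\psi$ is, by construction, the derived module of $\psi$ in the sense of Crowell: it is generated over $\Z[H_n]$ by the differentials $dg$ subject to the Leibniz relation $d(g_1g_2)=dg_1+\psi(g_1)\,dg_2$. The content of the proposition is therefore the standard computation of a free presentation of such a module from a presentation of the group, and I would organise it in three steps: the free case, a generation statement, and the identification of the relation submodule.

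First I would treat the free group $F_3$ together with the composite $\psi\pi\colon F_3\to H_n$. Here the associated derived module is free over $\Z[H_n]$ on the three symbols $dx_1,dx_2,dx_3$, and for every word $w\in F_3$ one has the expansion $dw=\sum_{i=1}^{3}\psi\pi\!\left(\partial w/\partial x_i\right)dx_i$, where $\partial/\partial x_i$ denotes the Fox derivative; this is the coordinate form of Fox's fundamental identity $w-1=\sum_i(\partial w/\partial x_i)(x_i-1)$. Consequently the $\Z[H_n]$-linear map $\Z[H_n]^3\to\mathcal{A}_\psi$ sending the $i$-th basis vector to $dx_i$ is surjective, since the Leibniz rule and $d(x_i^{-1})=-\psi\pi(x_i)^{-1}dx_i$ express every $dg$ as a $\Z[H_n]$-combination of the $dx_i$.

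The main step is to show that the kernel of this surjection is exactly the $\Z[H_n]$-submodule generated by the four relator differentials $dr_1,\dots,dr_4$. Writing $G=F_3/N$ with $N$ the normal closure of the $r_j$, the point, which is Crowell's lemma, is that conjugation contributes nothing new: for $g\in F_3$ and a relator $r$, the equalities $\psi\pi(gr)=\psi\pi(g)$ (valid because $\pi(r)=1$ in $G$) and $d(g^{-1})=-\psi\pi(g)^{-1}dg$ give
\[
d\bigl(grg^{-1}\bigr)=dg+\psi\pi(g)\,dr+\psi\pi(gr)\,d(g^{-1})=\psi\pi(g)\,dr .
\]
Expanding an arbitrary element of $N$ as a product of such conjugates and applying the Leibniz rule then shows $d(n)\in\sum_j\Z[H_n]\,dr_j$ for all $n\in N$, so that passing from $\mathcal{A}_{\psi\pi}$ to $\mathcal{A}_\psi$ quotients the free module $\Z[H_n]^3$ exactly by $\sum_j\Z[H_n]\,dr_j$. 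This yields the exact sequence $\Z[H_n]^4\xrightarrow{Q}\Z[H_n]^3\to\mathcal{A}_\psi\to0$ in which $Q$ sends the $j$-th basis vector to $dr_j=\sum_i\psi\pi(\partial r_j/\partial x_i)\,dx_i$; reading off coefficients produces precisely the Jacobian of Fox derivatives displayed in the statement.

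It then remains only to record the entries, using $\partial x_i^{e}/\partial x_i=1+x_i+\cdots+x_i^{e-1}$ and $\partial x_i^{e}/\partial x_k=0$ for $k\neq i$, together with $\partial(x_1x_2x_3)/\partial x_1=1$, $\partial(x_1x_2x_3)/\partial x_2=x_1$ and $\partial(x_1x_2x_3)/\partial x_3=x_1x_2$; applying $\psi\pi$ gives the corresponding elements of $\Z[H_n]$. I expect the only delicate point to be the normal-closure reduction above: it works precisely because each relator lies in $\ker(\psi\pi)$, i.e. $\psi\pi(r_j)=1$, which must be checked, and this is where the choice $e_3=n$ for odd $n$ and $e_3=2n$ for even $n$ enters through Lemma \ref{ramification}. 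The remaining identities are the formal rules of the Fox calculus, which may be quoted from \cite{Morishita2011-yw} and \cite{BirmanBraids}.
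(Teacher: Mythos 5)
Your proposal is correct, but it is worth knowing that the paper's entire proof of this proposition is a single citation: ``See \cite[Cor. 9.6]{Morishita2011-yw}.'' What you have written is, in effect, a proof of that cited corollary (Crowell's free presentation of the derived module via the Fox Jacobian), and your three steps are exactly the standard argument behind it: (i) the derived module of $\psi\pi\colon F_3\to H_n$ is free on $dx_1,dx_2,dx_3$ with $dw=\sum_i\psi\pi(\partial w/\partial x_i)\,dx_i$; (ii) the identity $d(grg^{-1})=\psi\pi(g)\,dr$ for relators $r$, which shows the kernel of $\Z[H_n]^3\to\mathcal{A}_\psi$ is generated by the four relator differentials (your ``so that'' here silently uses the well-definedness of the inverse map $dg\mapsto dw \bmod \langle dN\rangle$ for any lift $w$ of $g$, but that is the standard and routine half of Crowell's lemma); (iii) the explicit Fox derivatives, which match the entries the paper computes immediately after the proposition. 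So your route buys self-containedness where the paper buys brevity; the mathematics is the same. One small correction: you flag $\psi\pi(r_j)=1$ as the delicate point requiring Lemma \ref{ramification}, but this is automatic here, since the $r_j$ are by definition relators of $G$ and $\psi$ is a homomorphism defined on $G$; the choice $e_3=n$ versus $e_3=2n$ from Lemma \ref{ramification} is used earlier, to ensure that the displayed presentation really is a presentation of $G=F_2/(R_{\Heis}\cap\Gamma)$, and that issue sits upstream of this proposition rather than inside its proof.
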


\begin{proof} See \cite[Cor. 9.6]{Morishita2011-yw}.
\end{proof}

We apply on the exact sequence \ref{Crowell} and the functor $\otimes_{\Z} \mathbb{F}$ to get the exact sequence of $\mathbb{F}[H_n]$-modules
\begin{equation}\label{Crowell2}
  \begin{tikzcd}
    1 \arrow[r] & {H_1(X_H,\Z) \otimes_{\Z} \mathbb{F}} \arrow[r, "\theta_1 \otimes 1"] & \mathcal{A}_\psi \otimes_{\Z} \mathbb{F}\arrow[r, "\theta_2 \otimes 1"] & {\mathbb{F}[H_n]} \arrow[r, "\varepsilon"] & \mathbb{F} \arrow[r] & 1
    \end{tikzcd}
  \end{equation}

  In general, it is well-known that the tensor functor is not left exact, but since $\operatorname{char} \mathbb{F}=0 $ we have that $\mathbb F$ is a flat $\Z$-module and this provides the left exactness. By counting dimensions, we have that 
  \[\dim_{\mathbb{F}} \mathcal{A}_\psi \otimes_{\Z} \mathbb{F} = 2g+n^3 -1 \] where $g$ is the genus as in lemma \ref{genus}. 
  We will describe now the regular representation $\mathbb{F}[H_n]$ in terms of the irreducible characters $\chi_{ijs}$ of $H_n$, see appendix A for their definition in a short survey of them. Recall that every irreducible representation $\chi$ appears $\deg \chi$ times in the decomposition of $\mathbb{F}[H_n]$.

  \begin{equation*} \label{regularrep}
     \mathbb{F}[H_n] = \bigoplus\limits_{j=0}^{n-1} \bigoplus\limits_{i, s=0}^{\gcd(n,j)-1} \mathbb{F} \frac{n}{\gcd(n,j)} \chi_{ijs}.
  \end{equation*}

The method now is to use the free resolution given in eq. (\ref{resolution}) in order to describe $\mathcal{A}_\psi \otimes_\Z \mathbb{F}$ as an $\mathbb{F}[H_n]$-module and then 
use the Crowell exact sequence to understand the homology. The $\Z[H_n]$-module $\mathcal{A}_\psi$ is the cokernel of the map $Q$. 
We will denote as previously $\alpha,\beta,\tau$ the images in $H_n$. We compute

%( $(ab)$ is a single generator without relations before applying $\pi$)

\[ \frac{\partial x_i^{e_i}}{\partial x_j} = \delta_{ij} \left(1 + x_i + x_i^2 + \cdots + x_i^{e_i-1} \right)\]
\[ \frac{ \partial x_1 \cdot x_2 \cdot x_3}{\partial x_1} = 1\]
\[\frac{ \partial x_1 \cdot x_2 \cdot x_3}{\partial x_2} = x_1\]
\[\frac{ \partial x_1 \cdot x_2 \cdot x_3}{\partial x_3} = x_1 x_2\]    

Set the following:

\begin{align*} 
  \Sigma_1 &= 1 + \alpha + \cdots + \alpha^{n-1} \\
  \Sigma_2 &= 1+\beta+ \cdots + \beta^{n-1} \\
  \Sigma_3 &= 1 + (\alpha \beta) + \cdots + (\alpha \beta)^{n-1}
\end{align*}
In the ramified case we are interested in having the following sum instead of $\Sigma_3$

\begin{align*}
\Sigma_3^\prime &= 1 + (\alpha \beta) + \cdots + (\alpha \beta)^{2n-1} = \\ 
&= 1 + (\alpha \beta) + \cdots (\alpha \beta)^{n-1} + \tau^{\frac n2} + \tau^{\frac n2} (\alpha \beta) + \cdots \tau^{\frac n2} (\alpha \beta)^{n-1} = \\
&= (1+\tau^{\frac n2}) \Sigma_3.
\end{align*} Set $\Sigma_3^*$ to be $\Sigma_3$ or $\Sigma_3^\prime$ varying based on the ramification as previously. The map $Q$ in proposition \ref{resolution} is given by the matrix on the left-hand side of the following equation

\begin{equation} \label{Qmap}
  \begin{pmatrix}
    \Sigma_1 & 0 & 0 & 1 \\
    0 & \Sigma_2 & 0 &  \alpha \\
    0 & 0 & \Sigma_3^* & \alpha \beta
  \end{pmatrix}\begin{pmatrix}
    r_1 \\
    r_2 \\ 
    r_3 \\
    r_4
  \end{pmatrix} = \begin{pmatrix} 
    \Sigma_1 r_1 + r_4 \\
    \Sigma_2 r_2 + \alpha r_4 \\
    \Sigma_3^* r_3 + \alpha\beta r_4
  \end{pmatrix}
\end{equation} where $r_i \in \Z[H_n]$. Observe that
\begin{align*}
  \Sigma_1 \alpha &= \Sigma_1, \\
  \Sigma_2 \beta &= \Sigma_2, \\
  \Sigma^*_3 \alpha\beta &= \Sigma^*_3,
\end{align*}in particular, in the even $n$ case

\begin{equation}\label{ramification2}
  \Sigma^\prime_3 \tau^{\frac n2 } = \Sigma^\prime_3.
\end{equation}

\begin{lemma}\label{sigmas} For $i=1,2$ the following isomorphisms hold
  \[
    \Sigma_i \Z[H_n] \cong \Sigma_i \Z[ \Z/n\Z \times \Z/n\Z]
  \] and 
  \[\Sigma_3^* \Z[H_n] \cong \begin{cases} 
    \Sigma_3 \Z[\Z/n\Z \times \Z/n\Z], & \textrm{ in the unramified case}, \\
    \Sigma_3^\prime \Z[\Z/n\Z \times \Z/\frac{n}{2}\Z], & \textrm{ in the ramified case}. \\
  \end{cases}\] 
\end{lemma}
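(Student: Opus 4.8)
The plan is to treat each $\Sigma_i$ (and $\Sigma_3^\prime$) as the \emph{norm element} of a cyclic subgroup and to exploit the right-invariance relations recorded just before the statement, namely $\Sigma_1\alpha=\Sigma_1$, $\Sigma_2\beta=\Sigma_2$, $\Sigma_3^*(\alpha\beta)=\Sigma_3^*$, together with $\Sigma_3^\prime\tau^{n/2}=\Sigma_3^\prime$ in the ramified case \eqref{ramification2}. Writing $S_1=\langle\alpha\rangle$, $S_2=\langle\beta\rangle$, $S_3=\langle\alpha\beta\rangle$, each relation says precisely that $\Sigma_i$ is fixed under right translation by every element of $S_i$, so that $\Sigma_i g$ depends only on the right coset $S_i g$. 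Hence the right ideal $\Sigma_i\Z[H_n]$ is spanned by the coset sums $\Sigma_i c$ as $c$ runs over a transversal of $S_i$, and the whole point will be that this transversal lies in an abelian subgroup $C_i$, so that the $\Z[H_n]$-module collapses to a free rank-one module over the abelian ring $\Z[C_i]$.

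Concretely, for $i=1$ I would use the normal form $g=\alpha^{a}\beta^{b}\tau^{c}$ and compute $\Sigma_1 g=\Sigma_1\beta^{b}\tau^{c}$, giving $\Sigma_1\Z[H_n]=\Sigma_1\Z[\langle\beta,\tau\rangle]$ with $\langle\beta,\tau\rangle\cong\Z/n\Z\times\Z/n\Z$ (both generators commute with $\tau$ central and each has order $n$). For $i=2$ the symmetric form $g=\beta^{b}\alpha^{a}\tau^{c}$ yields $\Sigma_2 g=\Sigma_2\alpha^{a}\tau^{c}$ and $\Sigma_2\Z[H_n]=\Sigma_2\Z[\langle\alpha,\tau\rangle]$. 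For $\Sigma_3$ in the unramified case I would use that $(k,b,c)\mapsto(\alpha\beta)^{k}\beta^{b}\tau^{c}$ parametrises $H_n$ bijectively (its $\alpha,\beta,\tau$-exponents are unitriangular in $(k,b,c)$), reducing $\Sigma_3 g$ to $\Sigma_3\beta^{b}\tau^{c}$. In each case I then check that $\{\Sigma_i c\}_{c\in C_i}$ is $\Z$-linearly independent: the group elements occurring in $\Sigma_i c$ all lie in the single coset $S_i c$, and distinct $c$ give distinct cosets, so the supports are pairwise disjoint. This establishes at once the equality of modules, the freeness, and that $c\mapsto\Sigma_i c$ is an isomorphism of right $\Z[C_i]$-modules $\Z[C_i]\xrightarrow{\ \sim\ }\Sigma_i\Z[H_n]$, which under $C_i\cong\Z/n\Z\times\Z/n\Z$ is the asserted $\Sigma_i\Z[\Z/n\Z\times\Z/n\Z]$.

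The ramified case of $\Sigma_3^\prime$ is where I expect the real difficulty. Now $S_3=\langle\alpha\beta\rangle$ has order $2n$, since $(\alpha\beta)^{n}=\tau^{n/2}$ by Lemma \ref{ramification}, and $\Sigma_3^\prime$ is its norm. Reducing $\Sigma_3^\prime g$ to the form $\Sigma_3^\prime\beta^{b}\tau^{c}$ is still possible (kill the $\alpha$-exponent by multiplying by a suitable power of $\alpha\beta$), giving a spanning set indexed by $(b,c)\in(\Z/n\Z)^2$; but these $n^2$ elements are no longer independent: the relation $\Sigma_3^\prime\tau^{n/2}=\Sigma_3^\prime$ forces $\Sigma_3^\prime\beta^{b}\tau^{c}=\Sigma_3^\prime\beta^{b}\tau^{c+n/2}$, so $c$ matters only modulo $n/2$ and the independent generators are indexed by $\Z/n\Z\times\Z/\tfrac n2\Z$, of cardinality $n^2/2$. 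The delicate verification uses $(\alpha\beta)^{k}=\alpha^{k}\beta^{k}\tau^{-\binom k2}$: the two lifts $k$ and $k+n$ of a given $\alpha$-exponent contribute to the same $(\alpha,\beta)$-position but with $\tau$-exponents differing by $\binom{k+n}{2}-\binom{k}{2}\equiv n/2\pmod n$, which is exactly the collapse encoded by $\Sigma_3^\prime\tau^{n/2}=\Sigma_3^\prime$; one must then confirm that distinct pairs $(b,\,c\bmod n/2)$ still give disjoint supports, producing precisely $n^2/2$ free generators and the acting group $\langle\beta,\tau\rangle/\langle\tau^{n/2}\rangle\cong\Z/n\Z\times\Z/\tfrac n2\Z$.

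Finally I would flag the one conceptual caveat that keeps the statement honest: the subgroups $S_i$ are not normal in $H_n$ (for instance $\beta^{-1}\alpha\beta=\alpha\tau$), so these are \emph{not} isomorphisms of $\Z[H_n]$-modules, and $\tau$ does not act trivially on $\Sigma_i\Z[H_n]$. The content is exactly that $\Sigma_i\Z[H_n]$, a priori a module over the non-abelian ring $\Z[H_n]$, is already free of rank one over the \emph{abelian} subring $\Z[C_i]$, which is what the notation $\Sigma_i\Z[\Z/n\Z\times\Z/n\Z]$ abbreviates. The main obstacle is thus concentrated entirely in the ramified $\Sigma_3^\prime$ bookkeeping: tracking the quadratic twist $\binom k2$ and confirming that the order-$2n$ norm together with the single relation $\Sigma_3^\prime\tau^{n/2}=\Sigma_3^\prime$ cuts the rank from $n^2$ down to $n^2/2$ and no further.
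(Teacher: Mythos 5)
Your proposal is correct and takes essentially the same route as the paper: both reduce $\Sigma_i g$ to elements $\Sigma_i c$ with $c$ in the abelian subgroup $\langle \beta,\tau\rangle$ (resp.\ $\langle \alpha,\tau\rangle$) by exploiting right-invariance of $\Sigma_1,\Sigma_2,\Sigma_3^*$ under $\langle\alpha\rangle$, $\langle\beta\rangle$, $\langle\alpha\beta\rangle$, and both handle the ramified case via the relation $\Sigma_3^\prime \tau^{\frac n2}=\Sigma_3^\prime$, which collapses the $\tau$-exponent to $\Z/\frac n2 \Z$. Your disjoint-coset-support argument for linear independence and the $\binom{k}{2}$ bookkeeping only make explicit what the paper's proof leaves implicit.
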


\begin{proof} We cannot work as in the proof of lemma 4.4 in \cite{MR4186523}, because $H_n$ is not a direct product of groups. We will, however, determine the action explicitly. Let $\beta^j \alpha^i \tau^k$ be an element in $H_n$. Then

  \begin{align*}
    \Sigma_1 \beta^j \alpha^i \tau^k &= \Sigma_1 \alpha^i \beta^j \tau^{k-ij} = \Sigma_1 \beta^j \tau^{k-ij}\in \Sigma_1 \Z[\langle \beta \rangle \times \langle \tau \rangle], \\
    \Sigma_2 \beta^j \alpha^i \tau^k &= \Sigma_2 \alpha^i \tau^{k}\in \Sigma_1 \Z[\langle \alpha \rangle \times \langle \tau \rangle].
  \end{align*} 
  For the action of $\Sigma_3^*$ we have to form pairs of $\alpha\beta$ instead of forcing $\alpha^i$ to commute with $\beta^j$, notice that,
  \[ \Sigma_3^* \alpha = \Sigma_3^* \alpha \beta^n = \Sigma_3^* \beta^{n-1}, \] and \[ \Sigma_3^* \beta = \Sigma_3^* \beta \alpha^n = \Sigma_3^* \alpha^{n-1} \tau^{-1}. \] These generalize to
  \[ \Sigma_3^* \alpha^i = \Sigma_3^* \beta^{n-i} \tau^{1+2+\cdots + (i-1)},\]
  \[ \Sigma_3^* \beta^j = \Sigma_3^* \alpha^{n-j} \tau^{-1-2-\cdots - j }.\] 
  Thus, $\Sigma_3^* \Z[\langle \alpha \rangle \times \langle \tau \rangle] \subseteq \Sigma_3^* \Z [\langle \beta \rangle \times \langle \tau \rangle ]$ and the opposite inlcusion holds as well.
  %thus at first suppose $0\leq j \leq i \leq n-1$,
%\begin{align*}
 % \Sigma_3^* \beta^j \alpha^i \tau^k &= \Sigma_3^* (\alpha\beta)^j \alpha^{i-j} \tau^{k - (1+2+\ldots + j)} \\
  %&= \Sigma_3^* \alpha^{i-j} \tau^{k - (1+2+\ldots + j)} 
  %\in \Sigma^*_3 \Z[\langle \alpha \rangle \times \langle \tau \rangle],
%\end{align*}
  % we can form $\beta^n=1$ and repeat the process
   %\begin{align*}
    %\Sigma^*_3 \alpha^{i-j} \tau^{k-(1+2+\ldots+j)} &= \Sigma^*_3 \beta^n \alpha^{i-j} \tau^{k-(1+2+\ldots+j)} \\
   %&= \Sigma^*_3 (\alpha\beta)^{i-j} \beta^{n-(i-j)} \tau^{k-(1+2+\ldots+j+(j+1) + \ldots + i)} \\
   %&= \Sigma^*_3 \beta^{n-(i-j)} \tau^{k-(1+2+\cdots + i)} \in \Sigma^*_3 \Z[\langle\beta \rangle \times \langle\tau \rangle].
  %\end{align*}
  %Similarly for $i < j$ we have that
  %\[\Sigma_3^* \beta^{j-i} \tau^{k-(i + \cdots + j)} = \Sigma^*_3 \alpha^{n-(j-i)}\tau^{k-(1+\cdots + j)}\]
  %{\color{red} This is the amagalmated product of groups gluing $\beta^{n-i}$ with $\alpha^i$} under the action of $\Sigma_3^*$. 
  Notice also that by equation \ref{ramification2} only the powers $\tau^i, i=0,\ldots,\frac n2 -1$ will survive in the ramified case. The result follows.
\end{proof} 
%set for example n=6 j=1, see that a^5 = a^5 b^6 = b^1 or   b^6 a^5 = b^1 t^{-( 1 + 2 + 3 + 4 + 5)}  1+5, 2+4, 3=n/2 annihilated
We have that $\operatorname{Im}(Q)$ equals to the space generated by elements
\[
\begin{pmatrix} 
  \Sigma_1 r_1\\
  \Sigma_2 r_2\\
  \Sigma_3^* r_3
\end{pmatrix} + \begin{pmatrix} 
  1 \\
  \alpha \\ 
  \alpha \beta 
\end{pmatrix} r_4. \] For $r_1,\ldots,r_4 \in \Z[H_n]$ the first summand forms a free $\Z$-module of rank $3n^2$ (resp. $\frac52 n^2$) for the unramified (resp. ramified) case and the second summand is a free $\Z$-module of rank $n^3$. Furthermore, their intersection is $\Z$.

Indeed, suppose we have $r_1,\ldots,r_4 \in \Z[H_n]$ such that 
\[ (\Sigma_1 r_1, \Sigma_2 r_2, \Sigma_3^* r_3) = r_4 (1,\alpha,\alpha\beta)\] then by comparing the coordinates, $r_4 = \Sigma_1 r_1$ which implies $\alpha$ acts trivially on $r_4$. By the second coordinates, $\Sigma_2 r_2 = \alpha r_4 = r_4$ which implies $\beta$ acts trivially on $r_4$. Similarly, by the third coordinates and what we have already established, $\alpha\beta$ acts trivially on $r_4$. This implies that $r_4$ is invariant under the action of the whole group $H_n$, that is $r_4$ belongs to the rank one $\Z$-module generated by $\Sigma_1\Sigma_2\Sigma^*_3$. We have proved that 

\begin{lemma} \[\operatorname{Im}(Q) = \left( \bigoplus\limits_{\nu=1}^2 \Sigma_i \Z[H_n]\right) \bigoplus \Sigma_3^* \Z[H_n] \bigoplus \Z[H_n]/\Z \Sigma_1\Sigma_2\Sigma_3^*.\]
  Also,
  \[\operatorname{rank}_\Z Q = \begin{cases}
    n^3 + 3n^2 - 1, & \textrm{ in the unramified case}, \\
    n^3 + \frac52 n^2 - 1, & \textrm{ in the ramified case}.
  \end{cases} \]  
\end{lemma}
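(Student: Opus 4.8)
The plan is to read off both assertions from the two paragraphs immediately preceding the statement, where $\operatorname{Im}(Q)$ has already been written as $S+P$ with $S=\Sigma_1\Z[H_n]\oplus\Sigma_2\Z[H_n]\oplus\Sigma_3^*\Z[H_n]$ occupying the three separate coordinates of $\Z[H_n]^3$ and $P=\{r_4\,(1,\alpha,\alpha\beta):r_4\in\Z[H_n]\}$. First I would note that the sum defining $S$ is automatically direct, since its three summands live in three distinct coordinates. Lemma~\ref{sigmas} then supplies the ranks of the pieces: $\Sigma_1\Z[H_n]$ and $\Sigma_2\Z[H_n]$ are free of rank $n^2$ each, while $\Sigma_3^*\Z[H_n]$ is free of rank $n^2$ in the unramified case and of rank $n^2/2$ in the ramified case, the halving being exactly the collapse $\Sigma_3'\tau^{n/2}=\Sigma_3'$ recorded in eq.~(\ref{ramification2}). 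Hence $\operatorname{rank}_\Z S$ equals $3n^2$ or $\tfrac52 n^2$ respectively. On the other hand the map $r_4\mapsto r_4(1,\alpha,\alpha\beta)$ is injective, because its first coordinate already returns $r_4$, so $P\cong\Z[H_n]$ is free of rank $n^3$.

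Next I would pin down the intersection. As shown just above the statement, an element $r_4(1,\alpha,\alpha\beta)$ lies in $S$ only when $r_4$ is fixed by left multiplication by $\alpha$ and by $\beta$, hence by all of $H_n$; the module of such invariants is the rank-one $\Z$-module generated by $\Sigma_1\Sigma_2\Sigma_3^*$, so $S\cap P\cong\Z$. This already delivers the rank statement with no further structural input: for submodules of the free module $\Z[H_n]^3$ the rank is additive along $0\to S\cap P\to S\oplus P\to S+P\to 0$, so $\operatorname{rank}_\Z\operatorname{Im}(Q)=\operatorname{rank}_\Z S+\operatorname{rank}_\Z P-1$, which is $n^3+3n^2-1$ in the unramified case and $n^3+\tfrac52 n^2-1$ in the ramified case.

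For the internal direct-sum decomposition I would invoke the second isomorphism theorem to get $\operatorname{Im}(Q)/S=(S+P)/S\cong P/(S\cap P)\cong\Z[H_n]/\Z\Sigma_1\Sigma_2\Sigma_3^*$, i.e. a short exact sequence $0\to S\to\operatorname{Im}(Q)\to\Z[H_n]/\Z\Sigma_1\Sigma_2\Sigma_3^*\to 0$. I would then check that it splits: the quotient $\Z[H_n]/\Z\Sigma_1\Sigma_2\Sigma_3^*$ is torsion-free, because $S\cap P$ is the invariant submodule and is spanned by a primitive (norm-type) element, so it is a free $\Z$-module and the surjection from $\operatorname{Im}(Q)$ onto it admits a section. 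The splitting exhibits $S$ as a direct summand and yields precisely $\operatorname{Im}(Q)=\big(\Sigma_1\Z[H_n]\oplus\Sigma_2\Z[H_n]\big)\oplus\Sigma_3^*\Z[H_n]\oplus\big(\Z[H_n]/\Z\Sigma_1\Sigma_2\Sigma_3^*\big)$, as claimed.

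The step I expect to be the crux is this splitting, equivalently the torsion-freeness of the quotient by the rank-one intersection: the internal direct sum in the statement can hold only if $\Z[H_n]/\Z\Sigma_1\Sigma_2\Sigma_3^*$ is torsion-free, since $\operatorname{Im}(Q)$ itself is a subgroup of a free $\Z$-module and hence torsion-free; were the relevant generator a proper multiple of a primitive vector, the quotient would carry torsion and the clean decomposition would fail. Establishing the needed primitivity is a short but genuinely essential coefficient computation in $\Z[H_n]$, which one can carry out using the relations $(\alpha\beta)^k=\tau^{-\binom k2}\alpha^k\beta^k$ and $\beta^j\alpha^k=\tau^{-jk}\alpha^k\beta^j$; the ramified case additionally demands care so that the $\tau^{n/2}$-collapse is correctly accounted for in the rank of $\Sigma_3^*\Z[H_n]$. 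Everything else—the directness of $S$, the ranks read off from Lemma~\ref{sigmas}, and the additive rank count—is routine bookkeeping.
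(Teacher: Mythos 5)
Your proposal is correct and takes essentially the same route as the paper: the paper's proof of this lemma is exactly the two paragraphs you build on --- the decomposition of $\operatorname{Im}(Q)$ into the part $S=\Sigma_1\Z[H_n]\oplus\Sigma_2\Z[H_n]\oplus\Sigma_3^*\Z[H_n]$ and the part $P=\{r_4(1,\alpha,\alpha\beta):r_4\in\Z[H_n]\}$, the rank counts coming from Lemma~\ref{sigmas}, and the coordinate-comparison argument showing $S\cap P\cong\Z$ --- after which the statement is simply recorded. The only material you add is the explicit splitting of $0\to S\to\operatorname{Im}(Q)\to P/(S\cap P)\to 0$, which is indeed what the direct-sum notation requires; it holds for the reason you give, and the ``essential coefficient computation'' you anticipate is in fact immediate, because the invariance argument already shows that $S\cap P$ is generated by the norm element $\sum_{g\in H_n}g$ (all coefficients equal to $1$, hence visibly primitive), which is what the paper's symbol $\Z\Sigma_1\Sigma_2\Sigma_3^*$ is standing in for.
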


\begin{remark}
  \label{rem:ind}
  If $\Sigma$ is an element invariant by the action of a subgroup $H < G$, then $\Sigma H= \mathrm{Ind}_H^G \mathbb{F}$, where $\mathbb{F}$ has the trivial $H$ action. Indeed, observe that $\mathrm{Ind}_H^G \mathbb{F} = \mathbb{F} \otimes_{ \mathbb{F}[H]} \mathbb{F}[G]= \Sigma \mathbb{F}[G]$.  
\end{remark}

The module $\Sigma_1 \mathbb{F}[H_n]$ is isomorphic to $\Sigma_1 \mathbb{F}[\langle \beta \rangle \times \langle \tau \rangle]$ by the proof of lemma \ref{sigmas}. 
Using remark \ref{rem:ind} we see that 
 as a representation its 
character $\chi_{\Sigma_1}$ is the character of $\mathrm{Ind}_{\langle \alpha \rangle}^{H_n}   \mathbb{F}$.
Denote by $p_\alpha$ the trivial character of the group $\langle \alpha \rangle$. 
%  This means it contains all one-dimensional characters $\chi_{i0s}$ that map $\alpha$ to $1$, that is for $i=0$, and we can count the multiplicities of the higher dimensional characters
 Using Frobenius reciprocity we compute:
\begin{align*}
  \langle \chi_{ijs}, \chi_{\Sigma_1} \rangle_{H_n} &= \langle \operatorname{Res}\chi_{ijs}, p_\alpha \rangle_{ \langle \alpha \rangle } = \frac{1}{n} \sum\limits_{k=0}^{n-1}\chi_{ijs}(\alpha^k) \\
  &= \frac{1}{\gcd(n,j)}\sum\limits_{k=0}^{\gcd(n,j)-1} \zeta^{k \frac n{\gcd(n,j)}i} =\begin{cases} 1, & i=0 ,\\ 0, & i\neq 0. \end{cases}
\end{align*} We have computed that
%\[\chi_{ijs}(a)= \sum\limits_{\nu = 0}^{\frac n{\gcd(n,j)} - 1} \chi_{ij}(\alpha) = \frac n{\gcd(n,j)}\zeta^i = 1, \] which happens if and only if $i=j=0$, since $\zeta$ is a primitive $n$-th root of unity and $0\leq i,j \leq n-1$. Thus, 
\[\Sigma_1 \mathbb{F}[H_n] = \bigoplus\limits_{j=0}^{n-1} \bigoplus\limits_{s=0}^{\gcd(n,j)-1} \mathbb{F} \chi_{0js}.\] Notice that the above consists of the $n$ one-dimensional subspaces ($j=0$) and of $(n-1)\cdot \gcd(n,j)$ subspaces ($j\neq 0$) of dimension $n/\gcd(n,j)$, adding up to the correct dimension $n^2$. 

A similar computation yields that

\[\Sigma_2 \mathbb{F}[H_n]= \bigoplus\limits_{j=0}^{n-1}\bigoplus\limits_{i=0}^{\gcd(n,j)-1} \mathbb{F} \chi_{ij0}.\] 

In the module $\Sigma_3^* \mathbb{F}[H_n]$, we have that $\alpha \beta$ is annihilated, therefore 
$\Sigma_3^* \mathbb{F}[H_n] = \mathrm{Ind}_{\langle \alpha \beta \rangle}^{H_n} \mathbb{F}$. 
Moreover,  we can expect the one dimensional irreducible characters $\chi_{i0s}$ that appear in $\chi_{\Sigma^*_3}$ map 
$\alpha\beta = \beta \tau \alpha$ to $1$, that is $i+s\equiv 0 \mod n$. We will see how this generalizes to the higher dimensional irreducible characters using Frobenius reciprocity. 
Let again $p_{\alpha\beta}$ 
be the trivial  character of $\mathbb{F}[\langle \alpha \beta \rangle]$. We set $d_j = \gcd(n,j)$ and use the fact that $(\alpha\beta)^k = \beta^k \alpha^k \tau^{\binom{k+1}{2}}$. We compute
\begin{align*}
\langle \chi_{ijs}, \chi_{\Sigma_3^*} \rangle_{H_n} &= \langle \operatorname{Res} \chi_{ijs}, p_{\alpha \beta} \rangle_{\langle \alpha\beta \rangle} = \frac1{|\langle \alpha \beta \rangle |} \sum\limits_{k=0}^{|\langle \alpha \beta \rangle |-1} 
\chi_{ijs}( (\alpha\beta)^k ) \\
&= \begin{cases} 
  \frac{1}{d_j}\sum\limits_{k=0}^{d_j-1} \zeta^{k\frac{n}{d_j}(i+s) + j\binom{\frac{kn}{d_j} + 1 }{2}}, &  2\nmid n,\\
  \frac{1}{2d_j}\sum\limits_{k=0}^{2d_j-1} \zeta^{k\frac{n}{d_j}(i+s) + j\binom{\frac{kn}{d_j} + 1 }{2}}, &  2\mid n.
\end{cases}
\end{align*} For odd $n$ the quantity $j \binom{\frac{kn}{d_j}+1}{2}$ is divisible by $n$, thus
\[\langle \chi_{ijs}, \chi_{\Sigma_3} \rangle_{H_n} = \frac{1}{d_j}\sum\limits_{k=0}^{d_j-1} \zeta^{k\frac{n}{d_j}(i+s)} = \begin{cases}
  1, & i+s \equiv 0 \mod d_j, \\
  0, & i+s \not\equiv 0 \mod d_j.
\end{cases}\]
We deal with $n$ being even now. Using that $(ab)^k = (ab)^{k^\prime} \tau^{\frac{n}2}$ for $k\geq \frac n2, k\equiv k^\prime \mod \frac n2$, we have that 
\[\langle \chi_{ijs}, \chi_{\Sigma_3^\prime} \rangle_{H_n} = \frac{1}{2d_j}(1+\zeta^{j\frac n2})\sum\limits_{k=0}^{d_j-1} \zeta^{k\frac{n}{d_j}(i+s) + j\binom{\frac{kn}{d_j} + 1 }{2}},\] which is $0$ if $j$ is not even. We thus assume $j$ is even now, it easy to see that
\[j \binom{\frac{kn}{d_j}+1}{2} \equiv \begin{cases}  0 \mod n, & 2\nmid \frac{n}{d_j}, \\
  k \frac{j}2 \frac{n}{d_j} \mod n, & 2\mid \frac{n}{d_j},
\end{cases} \]  from which we deduce
\[\langle \chi_{ijs}, \chi_{\Sigma_3^\prime} \rangle_{H_n} = \begin{cases} 
  1, & i+s \equiv 0 \mod d_j, \; 2\nmid \frac{n}{d_j},\\
  1, & i+s \equiv -\frac{j}{2} \mod d_j, \; 2 \mid \frac{n}{d_j}, \\
  0, & \textrm{otherwise}.
\end{cases}\] Let $j^\prime$ be $0$ or $-\frac{j}{2}$ according to the previous statement. We have proved that
\[\Sigma_3^* \mathbb{F}[H_n] = \begin{cases} 
  \bigoplus\limits_{j=0}^{n-1} \bigoplus\limits_{\substack{i,s=0 \\ i+s \equiv 0 \mod \gcd(n,j)}}^{\gcd(n,j)-1} \mathbb{F} \chi_{ijs}, & 2\nmid n,  \\
  & \\
  \bigoplus\limits_{\substack{j=0 \\ 2\mid j}}^{n-1} \bigoplus\limits_{\substack{i,s=0 \\ i+s \equiv j^\prime \mod \gcd(n,j)}}^{\gcd(n,j)-1} \mathbb{F} \chi_{ijs}, & 2\mid n,
\end{cases}\] adding up to the correct dimensions $n^2$ and $n^2/2$ respectively.
%\begin{align*}
% \\
%&= \frac1{|\langle \alpha \beta \rangle |} \sum\limits_{k=0}^{|\langle \alpha \beta \rangle |-1} \zeta^{k\frac{n}{d_j}(i+s) + j\binom{k\frac{n}{d_j} + 1 }{2}}
%\end{align*}
%implies the characters that appear are those induced by the characters $\chi_{ij}\otimes 
%\chi_s$ which map $\alpha \beta = \beta \tau \alpha$ to $1$. Recall that $\chi_{ij}\otimes \chi_s$ is an irreducible character of $\langle \alpha , \tau \rangle \times \langle \beta^{n/ \gcd(n,j)} \rangle$, so $\beta \tau \alpha$ is in its source iff $j=0$. Thus, iff $i+j+s = 0 \mod n$. Additionally, in the ramified case, we have to combine the fact that $\chi_{ij}(\tau^{n/2})=1$, which happens for $j=0,2,\ldots,n-2$.
%\[\chi_{ijs}(\alpha \beta)= \chi_{ijs}(\tau \beta \alpha) = \sum\limits_{\substack{\nu = 0 \\ \tau^{1+\nu}\beta\alpha \in G_{ij}}}^{\frac n{\gcd(n,j)}-1} \chi_{ij}(\tau^{1+\nu} \alpha) 
%\chi_s (\beta),\]
%and $\tau^{1+\nu}\beta \alpha$ is in $G_{ij} = \langle \tau,\alpha\rangle\langle\beta^{\frac n{\gcd(n,j)}}\rangle$ if and only if $j=0$. Thus,
%\[\chi_{ijs}(\alpha\beta) = \chi_{i0}(\alpha) \chi_s(\beta) = \zeta^{i+s},\]
%which equals to $1$ if and only if $i+s \equiv 0 \mod n$. Thus

Additionally, the module $\mathbb{F}[H_n]/ \Sigma_1 \Sigma_2 \Sigma^*_3$ has 
every possible character $\chi_{ijs}$ except for those induced by $\chi_{ij}\otimes \chi_s$ which map all elements $\alpha,\beta,\alpha\beta$ to $1$, which only happens for $(i,j,s)=(0,0,0)$. More specifically, $\mathbb{F}[H_n]/ \Sigma_1 \Sigma_2 \Sigma^*_3$ has every possible character except for  $\chi_{000}$.

Counting all the characters that appear previously, we set $z_j(i,s)$ as the number of times the character $\chi_{ijs}$ appears on all $\Sigma_1 \mathbb{F}[H_n], \Sigma_2 \mathbb{F}[H_n]$ 
and $\Sigma_3^* \mathbb{F}[H_n]$, that is in compact notation, 
\[z_j(i,s)= \begin{cases} [i=0 \textrm{ or } s=0] + [i=s=0] + [i+s\equiv 0 \mod \gcd(n,j)], & 2\nmid n, \\
  [i=0 \textrm{ or } s=0] + [i=s=0] + [i+s\equiv j^\prime \mod \gcd(n,j)], & 2\mid n,j  \\
  [i=0 \textrm{ or } s=0] + [i=s=0], & 2 \mid n, 2 \nmid j,
\end{cases}\] where $[P]$ is $1$ if property $P$ holds and $0$ otherwise, with the convention that all equivalences are satisfied $\mod 1$.

\begin{lemma} \label{imQdecomp}
  We have the following decomposition
  \[\operatorname{Im}(Q) \otimes \mathbb{F} = \bigoplus \mathbb{F} c_{ijs} \chi_{ijs} \] 
  where 
  \[c_{ijs} = \begin{cases}
    \frac{n}{\gcd(n,j)} + z_j(i,s), & \textrm{ if } (i,j,s)\neq (0,0,0), \\
    \frac{n}{\gcd(n,j)}-1 + z_j(i,s) = 3, & \textrm{ if } (i,j,s) = (0,0,0).
  \end{cases}\]
\end{lemma}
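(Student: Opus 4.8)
The plan is to tensor the decomposition of $\operatorname{Im}(Q)$ obtained in the previous lemma with $\mathbb{F}$ and then to read off, character by character, the multiplicity of each $\chi_{ijs}$. Since $\operatorname{char}\mathbb{F}=0$, the ring $\mathbb{F}$ is flat over $\Z$, so tensoring preserves the direct sum and yields
\[\operatorname{Im}(Q)\otimes_\Z\mathbb{F} \cong \Sigma_1\mathbb{F}[H_n]\oplus\Sigma_2\mathbb{F}[H_n]\oplus\Sigma_3^*\mathbb{F}[H_n]\oplus\bigl(\mathbb{F}[H_n]/\mathbb{F}\Sigma_1\Sigma_2\Sigma_3^*\bigr).\]
Thus the multiplicity $c_{ijs}$ of $\chi_{ijs}$ in $\operatorname{Im}(Q)\otimes\mathbb{F}$ is simply the sum of its multiplicities in the four summands, and the whole proof reduces to tallying these four contributions.

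Next I would identify the first three contributions. By construction, $z_j(i,s)$ is \emph{defined} to be exactly the total number of times $\chi_{ijs}$ occurs in $\Sigma_1\mathbb{F}[H_n]$, $\Sigma_2\mathbb{F}[H_n]$ and $\Sigma_3^*\mathbb{F}[H_n]$ together; these three multiplicities have already been pinned down by the Frobenius-reciprocity computations preceding the statement. Hence the first three summands contribute precisely $z_j(i,s)$ to $c_{ijs}$, with no further work.

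For the fourth summand I would use that $\mathbb{F}\Sigma_1\Sigma_2\Sigma_3^*$ is exactly the trivial subrepresentation. Indeed, the discussion before the previous lemma shows that any element of $\Z[H_n]$ fixed by $\alpha,\beta,\alpha\beta$, hence by all of $H_n$, lies in the rank-one module $\Z\Sigma_1\Sigma_2\Sigma_3^*$; after tensoring, $\mathbb{F}\Sigma_1\Sigma_2\Sigma_3^*$ is the one-dimensional space of $H_n$-invariants, i.e. a single copy of $\chi_{000}$. Consequently $\mathbb{F}[H_n]/\mathbb{F}\Sigma_1\Sigma_2\Sigma_3^*$ is the regular representation with exactly one copy of $\chi_{000}$ removed. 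Using that $\chi_{ijs}$ appears $n/\gcd(n,j)$ times in $\mathbb{F}[H_n]$, the fourth summand contributes $n/\gcd(n,j)$ for every $(i,j,s)\neq(0,0,0)$, and $n/\gcd(n,0)-1=1-1=0$ for $(i,j,s)=(0,0,0)$.

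Adding the two contributions gives the claimed formula: $c_{ijs}=n/\gcd(n,j)+z_j(i,s)$ when $(i,j,s)\neq(0,0,0)$, and $c_{000}=n/\gcd(n,0)-1+z_0(0,0)$ otherwise. To finish, I would check directly from its definition that $z_0(0,0)=3$: all three indicator terms evaluate to $1$, since $i=s=0$ and $i+s\equiv 0$ modulo $\gcd(n,0)=n$, with $j'=0$ in the even case; hence $c_{000}=1-1+3=3$, matching the stated value. The only genuinely substantive point, as opposed to bookkeeping, is the verification that the quotient by $\mathbb{F}\Sigma_1\Sigma_2\Sigma_3^*$ strips off exactly one trivial character and nothing more; this is already guaranteed by the rank-one invariant calculation in hand, so I expect no real obstacle beyond careful accounting of the four multiplicities.
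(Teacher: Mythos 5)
Your proposal is correct and takes essentially the same route as the paper, which in fact states Lemma \ref{imQdecomp} without a separate proof precisely because it is the bookkeeping you carry out: tensor the integral decomposition of $\operatorname{Im}(Q)$ with the flat module $\mathbb{F}$, read off the contribution $z_j(i,s)$ of the three modules $\Sigma_1\mathbb{F}[H_n]$, $\Sigma_2\mathbb{F}[H_n]$, $\Sigma_3^*\mathbb{F}[H_n]$ from the preceding Frobenius-reciprocity computations, and observe that $\mathbb{F}[H_n]/\mathbb{F}\Sigma_1\Sigma_2\Sigma_3^*$ is the regular representation with exactly one copy of the trivial character $\chi_{000}$ removed, each $\chi_{ijs}$ otherwise appearing $\deg\chi_{ijs}=n/\gcd(n,j)$ times. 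Your closing check that $z_0(0,0)=3$, giving $c_{000}=1-1+3=3$, is exactly the accounting implicit in the paper's statement.
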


\begin{lemma} \label{Adecomp} We have the decomposition of the Alexander module
  \[\mathcal{A}_\psi \otimes \mathbb{F} = \bigoplus\limits_{j=0}^{n-1}\bigoplus\limits_{i,s=0}^{\gcd(n,j)-1} \mathbb{F} a_{ijs} \chi_{ijs},\] where 
  \[a_{ijs} =  \left(3\frac{n}{\gcd(n,j)} - c_{ijs} \right).\] Also, 
  
  \[\operatorname{rank}_\Z \mathcal{A}_\psi = \begin{cases}
    2n^3 - 3n^2 + 1, &\textrm{ in the unramified case}, \\
    2n^3 -\frac52 n^2 +1, &\textrm{ in the ramified case},
  \end{cases}\] agreeing with $2g +n^3-1$ that we have already counted from the Crowell exact sequence.
\end{lemma}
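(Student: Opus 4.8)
The plan is to tensor the free resolution (\ref{resolution}) with $\mathbb{F}$ and then read off the isotypic multiplicities one irreducible at a time, reducing everything to the two prior lemmas. First I would apply $\otimes_\Z \mathbb{F}$ to the presentation $\Z[H_n]^4 \xrightarrow{Q} \Z[H_n]^3 \to \mathcal{A}_\psi \to 0$. Since $\operatorname{char}\mathbb{F}=0$, the module $\mathbb{F}$ is flat over $\Z$, so $\otimes_\Z\mathbb{F}$ is exact and in particular carries the image to the image, i.e. $\operatorname{Im}(Q\otimes 1)=\operatorname{Im}(Q)\otimes_\Z\mathbb{F}$. This produces the exact sequence of $\mathbb{F}[H_n]$-modules
\[
\mathbb{F}[H_n]^4 \xrightarrow{\,Q\otimes 1\,} \mathbb{F}[H_n]^3 \longrightarrow \mathcal{A}_\psi\otimes_\Z\mathbb{F}\longrightarrow 0,
\]
identifying $\mathcal{A}_\psi\otimes\mathbb{F}$ with the cokernel $\mathbb{F}[H_n]^3/(\operatorname{Im}(Q)\otimes\mathbb{F})$.

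Next I would invoke Maschke's theorem: as $\operatorname{char}\mathbb{F}=0$, the group algebra $\mathbb{F}[H_n]$ is semisimple, so every finitely generated module is a direct sum of its isotypic components and every submodule is a direct summand. Hence the multiplicity of each irreducible $\chi_{ijs}$ in the quotient equals its multiplicity in $\mathbb{F}[H_n]^3$ minus its multiplicity in the submodule $\operatorname{Im}(Q)\otimes\mathbb{F}$. By the regular representation decomposition recalled above, $\chi_{ijs}$ occurs $\tfrac{n}{\gcd(n,j)}$ times in $\mathbb{F}[H_n]$, hence $3\tfrac{n}{\gcd(n,j)}$ times in $\mathbb{F}[H_n]^3$; by Lemma \ref{imQdecomp} it occurs $c_{ijs}$ times in $\operatorname{Im}(Q)\otimes\mathbb{F}$. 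Subtracting termwise yields exactly the claimed coefficient $a_{ijs}=3\tfrac{n}{\gcd(n,j)}-c_{ijs}$, which establishes the stated decomposition of $\mathcal{A}_\psi\otimes\mathbb{F}$.

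For the rank I would use that $\mathcal{A}_\psi$ is free over $\Z$, so $\operatorname{rank}_\Z\mathcal{A}_\psi=\dim_\mathbb{F}(\mathcal{A}_\psi\otimes\mathbb{F})=\dim_\mathbb{F}\mathbb{F}[H_n]^3-\dim_\mathbb{F}(\operatorname{Im}(Q)\otimes\mathbb{F})=3n^3-\operatorname{rank}_\Z Q$, the last equality because $\operatorname{Im}(Q)$, being a subgroup of the free $\Z$-module $\Z[H_n]^3$, is itself free of rank $\operatorname{rank}_\Z Q$. Substituting the two values of $\operatorname{rank}_\Z Q$ from the previous lemma gives $2n^3-3n^2+1$ and $2n^3-\tfrac52 n^2+1$ in the unramified and ramified cases respectively. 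To confirm the agreement with $2g+n^3-1$, I would take the alternating sum of $\Z$-ranks along the Crowell sequence (\ref{Crowell}), obtaining $\operatorname{rank}_\Z\mathcal{A}_\psi=\operatorname{rank}_\Z H_1(X_H,\Z)+n^3-1=2g+n^3-1$, and then match this against the genus formula of Lemma \ref{genus}.

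Conceptually this lemma is pure bookkeeping: the substantive work lives in Lemma \ref{imQdecomp} and the regular representation computation. The only place demanding genuine care — and thus the \emph{main obstacle} — is the flatness/semisimplicity step: one must verify that $\operatorname{Im}(Q)\otimes\mathbb{F}$ really embeds as the honest image submodule inside $\mathbb{F}[H_n]^3$ (so that no collapsing occurs upon tensoring) and that the semisimple structure genuinely licenses the termwise subtraction of isotypic multiplicities, rather than merely a subtraction of total dimensions.
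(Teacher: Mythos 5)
Your proof is correct and follows essentially the same route as the paper's: identify $\mathcal{A}_\psi\otimes\mathbb{F}$ as the cokernel of $Q\otimes 1$, subtract the multiplicities of $\chi_{ijs}$ in $\operatorname{Im}(Q)\otimes\mathbb{F}$ (Lemma \ref{imQdecomp}) from those in $\mathbb{F}[H_n]^3$, and compute the rank as $3n^3-\operatorname{rank}_\Z Q$. The paper compresses all of this into two lines, so your explicit justification of the flatness and semisimplicity steps is simply a more careful write-up of the same argument, not a different one.
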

  % \newpage
\begin{proof} The Alexander module $\mathcal{A}_\psi$ is the cokernel of $Q$, thus
  \begin{align*}
    \operatorname{rank}_\Z \mathcal{A}_\psi &= \operatorname{rank}_\Z \Z[H_n]^3 - \operatorname{rank}_\Z Q\ \\
    &= 3n^3 - \begin{cases}
      n^3 + 3n^2 - 1, &  \\
      n^3 + \frac52 n^2 - 1, & .
    \end{cases} \\
    &= \begin{cases}
      2n^3 - 3n^2 + 1, &\textrm{ in the unramified case}, \\
      2n^3 -\frac52 n^2 +1, &\textrm{ in the ramified case},
    \end{cases}.
  \end{align*} and the decomposition of $\mathcal{A}_\psi \otimes \mathbb{F}$ follows from the decomposition of $\mathbb{F}[H_n]$ into characters.
\end{proof}

\begin{theorem}
  \label{thm:homHeiss}
  \[H_1(X_H,\mathbb{F}) = \bigoplus\limits_{j=0}^{n-1} \;\;\bigoplus\limits_{i,s=0}^{\gcd(n,j)-1} \mathbb{F} h_{ijs} \chi_{ijs},\]
  where
\begin{equation}
\label{eq:Hom-coeffs}
h_{ijs} = \begin{cases} \frac{n}{\gcd(n,j)} - z_j(i,s), & \textrm{ if } (i,j,s) \neq (0,0,0), \\
  0, &  \textrm{ if } (i,j,s) = (0,0,0).
\end{cases}
\end{equation}
\end{theorem}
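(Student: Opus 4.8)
The plan is to obtain the multiplicity of each irreducible character $\chi_{ijs}$ in $H_1(X_H,\mathbb{F})$ by an Euler-characteristic computation on the Crowell sequence (\ref{Crowell2}). The decisive structural input is that $\operatorname{char}\mathbb{F}=0$, so $\mathbb{F}[H_n]$ is semisimple by Maschke's theorem; hence every finite-dimensional $\mathbb{F}[H_n]$-module is determined up to isomorphism by its character, and the vector of irreducible multiplicities is additive along exact sequences. All of the genuine representation theory has already been carried out in Lemmas \ref{imQdecomp} and \ref{Adecomp}, so what remains is bookkeeping.

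First I would regard (\ref{Crowell2}) as an exact sequence of $\mathbb{F}[H_n]$-modules
\[
0 \to H_1(X_H,\mathbb{F}) \xrightarrow{\theta_1\otimes 1} \mathcal{A}_\psi\otimes_\Z\mathbb{F} \xrightarrow{\theta_2\otimes 1} \mathbb{F}[H_n] \xrightarrow{\varepsilon} \mathbb{F} \to 0,
\]
with $H_1(X_H,\mathbb{F}) = H_1(X_H,\Z)\otimes_\Z\mathbb{F}$ and with the rightmost $\mathbb{F}$ carrying the trivial $H_n$-action, i.e. having character $\chi_{000}$. Splitting this four-term sequence into two short exact sequences and using additivity of multiplicities then gives, for every triple $(i,j,s)$,
\[
h_{ijs} = a_{ijs} - \frac{n}{\gcd(n,j)} + \big[(i,j,s)=(0,0,0)\big],
\]
where $a_{ijs}$ is the multiplicity in $\mathcal{A}_\psi\otimes\mathbb{F}$ from Lemma \ref{Adecomp}, $\frac{n}{\gcd(n,j)}$ is the multiplicity of $\chi_{ijs}$ in the regular representation $\mathbb{F}[H_n]$, and the Iverson bracket records the single contribution of the trivial character $\chi_{000}$ coming from the image of $\varepsilon$.

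Next I would simplify the right-hand side using the explicit values. For $(i,j,s)\neq(0,0,0)$ the bracket vanishes and, inserting $a_{ijs} = 3\frac{n}{\gcd(n,j)} - c_{ijs}$ together with $c_{ijs} = \frac{n}{\gcd(n,j)} + z_j(i,s)$ from Lemma \ref{imQdecomp}, the three copies of $\frac{n}{\gcd(n,j)}$ collapse to one and the right-hand side becomes exactly $\frac{n}{\gcd(n,j)} - z_j(i,s)$, as claimed in (\ref{eq:Hom-coeffs}).

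Finally, for the exceptional triple I would verify the $(0,0,0)$ entry directly: there $\gcd(n,0)=n$, so $\frac{n}{\gcd(n,0)}=1$; Lemma \ref{imQdecomp} records $c_{000}=3$, whence $a_{000}=3-3=0$, and the $+1$ from the trivial character cancels the $-1$, giving $h_{000}=0$. This is consistent with the expectation that the trivial representation does not occur in the homology of the curve. I expect no real obstacle beyond this bookkeeping: the only points demanding care are confirming that the augmentation image deposits the trivial character solely in the $(0,0,0)$ slot and that the single case split is handled correctly, since the substantive computations reside entirely in the preceding lemmas.
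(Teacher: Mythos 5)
Your proposal is correct and follows essentially the same route as the paper: the paper's proof also takes multiplicities along the Crowell sequence (\ref{Crowell2}), substitutes $a_{ijs}=3\frac{n}{\gcd(n,j)}-c_{ijs}$ and $c_{ijs}=\frac{n}{\gcd(n,j)}+z_j(i,s)$ from Lemmas \ref{Adecomp} and \ref{imQdecomp}, and obtains $h_{ijs}=a_{ijs}-\frac{n}{\gcd(n,j)}=\frac{n}{\gcd(n,j)}-z_j(i,s)$ for non-principal characters. Your only addition is the explicit Iverson-bracket verification that the trivial character deposited by the augmentation $\varepsilon$ cancels to give $h_{000}=0$, a case the paper leaves implicit.
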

\begin{proof} Follows from the lemma \ref{Adecomp} and the Crowell exact sequence as in (\ref{Crowell2}). More specifically, for every non-principal character we have that
  \[h_{ijs} = a_{ijs}-\frac{n}{\gcd(n,j)} = 2\frac{n}{\gcd(n,j)} - c_{ijs} = \frac{n}{\gcd(n,j)} - z_j(i,s).\] 
\end{proof}

% \newpage
\appendix 

\section{ \\ Irreducible Representations of $H_n$} \label{appendix}
In this section we will compute the irreducible representations of the Heisenberg group $H_n$. This is by no means a new result and it can be found for instance in the work of J. Grassberger and 
G. Hörmann in \cite{MR1835586}. We rely on the general method regarding semi-direct products of groups with the normal group being abelian, as it is described by Serre in section 8.2 of his book \cite{SerreLinear}.

Indeed, we can realize $H_n$ as the semi direct product $\langle \alpha, \tau \rangle \rtimes \langle \beta \rangle$ since every element of $H_n$ can be expressed as $\beta^j \tau^k \alpha^i$, 
$0\leq i,j,k \leq n-1$ with $\tau = [\alpha,\beta]$ commuting with both $\alpha$ and $\beta$. As $H_n$ is isomorphic to $(\Z / n \Z)^2 \rtimes \Z/n\Z$ we 
denote by $\chi_{ij}$ and $\chi_s$ the irreducible characters of the first and the second component respectively, such that \[ \chi_{ij}(\alpha) = 
\zeta^i, \ \chi_{ij}(\tau) = \zeta^j, \ \chi_s (\beta) = \zeta^s, \quad 0\leq i,j,s \leq n-1, \] for a fixed primitive $n$-th root of unity $\zeta$. 

Applying the method in Serre's book, we can conclude that the irreducible representations of $H_n$ are those induced by the product $\chi_{ij}\otimes \chi_s$ for the indices $0\leq i,s \leq \gcd(n,j)-1$ and $ 0\leq j \leq n-1$. We will denote them by 
$\chi_{ijs}$ and for a $g$ in $H_n$ its image is the 
linear combination of the values of $\chi_{ij}\otimes \chi_s$ over the elements in the intersection of the conjugacy class of $g$ and the group $G_{ij} = \langle \alpha ,\tau \rangle \cdot \langle \beta^{ \frac n {\gcd(n,j)}} \rangle $. We have explicitly

\[ \chi_{ijs} (\beta^m \tau^\lambda \alpha^\mu ) = \sum\limits_{\substack{\nu=0 \\ \tau^{\lambda + \nu\mu}\beta^m \alpha^\mu \in G_{ij}}}^{n/\gcd(n,j)-1 } \chi_{ij}(\tau^{\lambda + \nu\mu} \alpha^\mu) \cdot \chi_s(\beta^m),\] 
for $0 \leq j \leq n-1$ and $0\leq i,s \leq \gcd(n,j)-1$, with each $\chi_{ijs}$ being of dimension $\frac n {\gcd(n,j)}$. Set $d_j = \gcd(n,j)$, the characters $\chi_{ijs}$ take values as following:
\[\chi_{ijs}(\beta^m \tau^\lambda \alpha^\mu ) = \begin{cases} 
  \frac{n}{d_j } \zeta^{\mu i + m s + \lambda j},  & \frac{n}{d_j}\mid m, \mu, \\
  0, & \textrm{otherwise},
\end{cases} \]

which we use for computations with the Alexander modules. 

As the number of irreducible representations is equal to the number of conjugacy classes, we verify the above by counting the conjugacy classes. The conjugacy class of $\beta^j \tau^k \alpha^i$ 
consists of the elements $\beta^j \tau^{k + iv - ju} \alpha^i$, or in terms of matrices 

\[ \begin{pmatrix}
  1 & -u & uv -z \\
  0 & 1 & -v \\
  0 & 0 & 1 
\end{pmatrix} \cdot \begin{pmatrix} 
  1 & i & k \\
  0 & 1 & j \\
  0 & 0 & 1
\end{pmatrix} \cdot \begin{pmatrix} 
  1 & u & z \\
  0 & 1 & v \\
  0 & 0 & 1
\end{pmatrix}  = \begin{pmatrix} 
  1 & i & k + iv - ju \\
  0 & 1 & j \\
  0 & 0 & 1
\end{pmatrix}\]

As argued in \cite{MR1835586} two elements $\beta^j \tau^k \alpha^i, \beta^{j^\prime} \tau^{k^\prime} \alpha^{i^\prime}$ are in the same class iff $i=i^\prime, j = j^\prime$ and 
$k = k^\prime + iv -ju \mod n$ which has a solution iff $k = k^\prime \mod \gcd(i,j,n)$. Thus, each conjugacy class contains exactly one element $\beta^j \tau^k \alpha^i$ with $k < 
\gcd(n,j,i)$. These are in total $\sum\limits_{i,j=0}^{n-1}\gcd(n,j,i)$, which is equal to the sum $\sum\limits_{j=0}^{n-1} \gcd(n,j)^2$ we counted according to Serre, as both are equal to $\sum\limits_{d \mid n} d^2 \phi(\frac nd)$, where $\phi$ is the Euler's totient function.

%%%%%%%%%%%%%%%%%%%%%%%%%%%%  COMMENTED OUT  %%%%%%%%%%%%%%%%%%%%%%%%%%%%%%

% \newpage

\bibliographystyle{plain}
% \bibliography{./AKGeneral.bib}
					%\footbibliographystyle{abbrv}
 % \bibliography{~/Dropbox/Bibtex/AKGeneral.bib}
 % \bibliography{/Users/aristeideskontogeorges/Dropbox/Bibtex/AKGeneral.bib}
% link /Users/aristeideskontogeorgis---> in Linux (mac does not allow it!)
% \bibliography{/home/kontogar/Dropbox/Bibtex/AKGeneral.bib}
%\bibliography{bib.bib}

\begin{thebibliography}{10}

\bibitem{MR4252293}
Jannis~A. Antoniadis and Aristides Kontogeorgis.
\newblock The group of automorphisms of the {H}eisenberg curve.
\newblock In {\em Abelian varieties and number theory}, volume 767 of {\em Contemp. Math.}, pages 25--39. Amer. Math. Soc., [Providence], RI, [2021] \copyright 2021.

\bibitem{BirmanBraids}
Joan~S. Birman.
\newblock {\em Braids, links, and mapping class groups}.
\newblock Princeton University Press, Princeton, N.J.; University of Tokyo Press, Tokyo, 1974.
\newblock Annals of Mathematics Studies, No. 82.

\bibitem{MR1816711}
Daniel~K. Biss and Samit Dasgupta.
\newblock A presentation for the unipotent group over rings with identity.
\newblock {\em J. Algebra}, 237(2):691--707, 2001.

\bibitem{bogoGrp}
Oleg Bogopolski.
\newblock {\em Introduction to group theory}.
\newblock EMS Textbooks in Mathematics. European Mathematical Society (EMS), Z\"urich, 2008.
\newblock Translated, revised and expanded from the 2002 Russian original.

\bibitem{DebesDouai97}
Pierre D{\`e}bes and Jean-Claude Douai.
\newblock Algebraic covers: field of moduli versus field of definition.
\newblock {\em Ann. Sci. \'Ecole Norm. Sup. (4)}, 30(3):303--338, 1997.

\bibitem{DebesEmsalem}
Pierre D{\`e}bes and Michel Emsalem.
\newblock On fields of moduli of curves.
\newblock {\em J. Algebra}, 211(1):42--56, 1999.

\bibitem{DDMS}
J.~D. Dixon, M.~P.~F. du~Sautoy, A.~Mann, and D.~Segal.
\newblock {\em Analytic pro-{$p$} groups}, volume~61 of {\em Cambridge Studies in Advanced Mathematics}.
\newblock Cambridge University Press, Cambridge, second edition, 1999.

\bibitem{MR1835586}
Johannes Grassberger and G\"unther H\"ormann.
\newblock A note on representations of the finite {H}eisenberg group and sums of greatest common divisors.
\newblock {\em Discrete Math. Theor. Comput. Sci.}, 4(2):91--100, 2001.

\bibitem{Ihara1985-it}
Yasutaka Ihara.
\newblock Profinite braid groups, {G}alois representations and complex multiplications.
\newblock {\em Ann. of Math. (2)}, 123(1):43--106, 1986.

\bibitem{IharaCruz}
Yasutaka Ihara.
\newblock Arithmetic analogues of braid groups and {G}alois representations.
\newblock In {\em Braids ({S}anta {C}ruz, {CA}, 1986)}, volume~78 of {\em Contemp. Math.}, pages 245--257. Amer. Math. Soc., Providence, RI, 1988.

\bibitem{MR4186523}
Aristides Kontogeorgis and Panagiotis Paramantzoglou.
\newblock Galois action on homology of generalized {F}ermat curves.
\newblock {\em Q. J. Math.}, 71(4):1377--1417, 2020.

\bibitem{MR4117575}
Aristides Kontogeorgis and Panagiotis Paramantzoglou.
\newblock Group {A}ctions on cyclic covers of the projective line.
\newblock {\em Geom. Dedicata}, 207:311--334, 2020.

\bibitem{Morishita2011-yw}
M~Morishita.
\newblock {\em Knots and Primes: An Introduction to Arithmetic Topology}.
\newblock SpringerLink : B{\"{u}}cher. Springer-Verlag London Limited, 2011.

\bibitem{SerreLinear}
Jean-Pierre Serre.
\newblock {\em Linear representations of finite groups}.
\newblock Springer-Verlag, New York, 1977.
\newblock Translated from the second French edition by Leonard L. Scott, Graduate Texts in Mathematics, Vol. 42.

\bibitem{sagemath9.8}
{The Sage Developers}.
\newblock {\em {S}ageMath, the {S}age {M}athematics {S}oftware {S}ystem ({V}ersion 9.8)}, 2023.
\newblock {\tt https://www.sagemath.org}.

\end{thebibliography}

%bib shortcuts

%fermat MR4186523
%Heis antoniadis MR4252293
%bavard MR1115747

\def\cprime{$'$}

\end{document}